\newcommand{\Philipp}{}
\theoremstyle{plain}
\newtheorem{corollary}{Corollary}[section]
\newtheorem{theorem}[corollary]{Theorem}
\newtheorem{lemma}[corollary]{Lemma}
\theoremstyle{definition}
\newtheorem{definition}[corollary]{Definition}
\theoremstyle{remark}
\newtheorem{remark}[corollary]{Remark}
\newtheorem{example}[corollary]{Example}
\DeclareMathOperator{\Evol}{Evol}
\DeclareMathOperator{\MyBox}{Box}
\DeclareMathOperator{\End}{End}
\DeclareMathOperator{\Id}{Id}
\DeclareMathOperator{\linearspan}{span}
\DeclareMathOperator{\aut}{aut}
\DeclareMathOperator{\sgn}{sgn}
\newcommand{\ud}{\mathrm{d}}
\newcommand{\norm}[1]{\left\|{#1}\right\|}
\newcommand{\abs}[1]{\left|{#1}\right|}
\newcommand{\evaluate}[2]{\left. {#1} \right|_{#2}}
\newcommand{\innerproduct}[2]{\left\langle #1 , #2 \right\rangle}
\author{Philipp Harms, Matrikelnummer 0326036}
\title{The Poincar\'{e} Lemma in Subriemannian Geometry}
\begin{document}

%
%

\begin{titlepage}
\begin{center}

\begin{figure}[h]
\begin{center}
\includegraphics{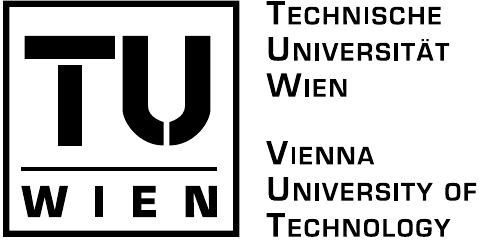}
\end{center}
\end{figure}
\vspace{2cm}

{\large D I P L O M A R B E I T} \vspace{1.5cm}

{\Huge The Poincar\'{e} Lemma \\
\vspace{0.3cm} in Subriemannian Geometry} \vspace{1.5cm}

ausgef\"{u}hrt am Institut f\"{u}r \\ \vspace{2mm}
{\large Wirtschaftsmathematik}\\ \vspace{2mm}
der Technischen Universit\"{a}t Wien \\ \vspace{1cm}
unter Anleitung von {\large Ao.Univ.Prof. Mag. Dr. Josef Teichmann}\\\vspace{2.5cm}
durch \\ \vspace{2mm}
{\large Philipp Harms}\\ \vspace{2mm}
Josefst\"adterstra\ss{}e 14/54 \\
1080 Wien \vspace{3cm}

\begin{tabular}
[c]{ccc}
\underline{\hspace*{5.2cm}} & \hspace*{3.5cm} & \underline{\hspace*{5.2cm}}\\
         Datum              &                 &         Unterschrift
\end{tabular}
\end{center}
\end{titlepage}

\newpage \mbox{ } \thispagestyle{empty}

\vspace{18cm}
\noindent 
\begin{center}
I am very grateful to my advisor Prof. Josef Teichmann, who has recommended the subject of subriemannian geometry to me, for his time and ongoing support. 

\vspace{.5cm}

I would like to thank Prof. Gruber for the integration in his institute. 

\vspace{.5cm}

I had the most wonderful time writing this work with my colleague Martin Bauer. 
\end{center}

\newpage
\setcounter{page}{1}

%
%

\section*{Abstract}

This work is a short, self-contained introduction to subriemannian geometry with special emphasis on Chow's Theorem. As an application, a regularity result for the Poincar\'e Lemma is presented. 

At the beginning, the definitions of a subriemannian geometry, horizontal vectorfields and horizontal curves are given. Then the question arises: Can any two points be connected by a horizontal curve? Chow's Theorem gives an affirmative answer for bracket generating distributions. (A distribution is called bracket generating if horizontal vectorfields and their iterated Lie brackets span the whole tangent space.) 

We present three different proofs of Chow's Theorem; each one is interesting in its own. The first proof is based on the theory of Stefan and Sussmann regarding integrability of singular distributions. The second proof is elementary and gives some insight in the shape of subriemannian balls. The third proof is based on infinite dimensional analysis of the endpoint map. 

Finally, the study of the endpoint map allows us to prove a regularity result for the Poincar\'e Lemma in a form suited to subriemannian geometry: If for some $r \geq 0$ all horizontal derivatives of a given function $f$ are known to be $r$ times continuously differentiable, then so is $f$. 

Sections~\ref{sec:subriemanniangeometry}~to~\ref{sec:chow} are the common work of Martin Bauer and Philipp Harms. 

\newpage

\tableofcontents
\newpage

%
%

\section{The definition of a subriemannian geometry}\label{sec:subriemanniangeometry}

For the basic definitions in differential geometry, see for example the book of \citet{Jaenich}. A more detailed treatment, including calculus in Banach spaces and Banach manifolds, can be found in the book of \citet{Abraham}. A good introduction to subriemannian geometry is the book of \citet{Montgomery}.
Throughout this paper, manifolds are assumed to be paracompact and smooth. Vectorfields and mappings between manifolds are assumed to be smooth if not stated otherwise. 

\begin{definition}\label{distribution} 
\index{distribution} \index{horizontal}
Let $M$ be a manifold. Suppose that for each $x \in M$ we are given a sub vector space $H_x$ of $T_x M$. The disjoint union $H=\coprod_{x\in M} H_x$ is called a distribution on $M$. We call tangent vectors in $H$ horizontal. 
\end{definition}

\begin{definition}\index{regular distribution} \index{singular distribution}
A distribution $H$ is called regular if the dimension of $H_x$ is locally constant, and singular otherwise. 
\end{definition}

\begin{definition}\index{vectorfield}\index{vectorfield ! local} \index{local vectorfield}
Let $\mathfrak{X}(M)$ denote the set of all vectorfields on $M$, and let $\mathfrak{X}_{loc}(M)$ be the set of local vectorfields, i.e. 
\begin{equation*}
\mathfrak{X}_{loc}(M) = \bigcup \left\{ \mathfrak{X}(U): U \text{ is an open subset of } M \right\}.
\end{equation*}
To avoid the need for remarks such as ``provided the domains of the vectorfields intersect'', we shall declare the vectorfield defined on the empty set to be an element of $\mathfrak{X}_{loc}(M)$. 
\end{definition}

\begin{definition}\index{vectorfield ! horizontal} \index{horizontal ! vectorfield}
A local vectorfield is called horizontal if it is horizontal at every point where it is defined. Let $\mathfrak{X}_H(M)$ denote the set of horizontal vectorfields and $\mathfrak{X}_{loc, H}(M)$ the set of local horizontal vectorfields  on $M$. 
\end{definition}

\begin{definition}\index{span}\index{distribution ! smooth} \index{smooth distribution}
We say that a set of local vectorfields $\mathcal V \subset \mathfrak{X}_{loc}(M)$ spans a distribution $H$ if for all points $x \in M$, $H_x$ is the linear hull of the set $\left\{X(x):X\in \mathcal V \right\}$. We say that a distribution $H$ is a smooth if it is spanned by $\mathfrak{X}_{loc,H}(M)$. 
\end{definition}

\begin{remark}
Every set of vectorfields spans a distribution, which is obviously smooth (the linear hull of the empty set is the vector space $\{0\}$). 
\end{remark}

\begin{definition}\index{locally finitely generated distribution}\index{finitely generated distribution}\index{distribution ! locally finitely generated}
A distribution $H$ is called locally finitely generated if for every point $x \in M$ there is neighbourhood $U$ of $x$ and a finite set of vectorfields $X_1, \dots, X_k \in \mathfrak X_H(U)$ spanning $H$ on $U$. 
\end{definition}

\begin{definition}\index{subriemannian metric}
Let $H$ be a smooth distribution on $M$. A subriemannian metric $\innerproduct{\cdot}{\cdot} $ on $H$ is a map that assigns to each point $x \in M$ an inner product $\innerproduct{\cdot}{\cdot}_x$ on $H_x$. Let $\norm{X}_x$ designate $\sqrt{\innerproduct{X}{X}_x}$ for $X \in H_x$. We say that a subriemannian metric is smooth if for all local horizontal vectorfields $X$ and $Y$, $x \mapsto \innerproduct{X(x)}{Y(x)}_x$ is smooth. 
\end{definition}

The smoothness of $\left\langle \cdot,\cdot \right\rangle$ does not imply that $\norm{\cdot}: H\rightarrow \mathbb{R}$ is continuous, when $H$ is endowed with the subspace topology of $TM$. 

\begin{example}
To give an example, let $H$ be the distribution on $\mathbb{R}$ spanned by the vectorfield $x\frac{\partial}{\partial x}$, and let $\left\langle \cdot,\cdot \right\rangle$ be such that 
$$\innerproduct{\frac{\partial}{\partial x}}{\frac{\partial}{\partial x}}_x=\frac{1}{x} \quad \text{for} \quad x \neq 0.$$ 
Then $\left\langle \cdot,\cdot \right\rangle$ is smooth, but is not continuous, where $H$ is given the subspace topology. To see that $\left\langle \cdot,\cdot \right\rangle$ is smooth, let $X_1,X_2$ be horizontal vectorfields defined near zero, $X_i(x)=f_i(x)\evaluate{\frac{\partial}{\partial x}}{x}$ with $f_i$ smooth, $f_i(0)=0$. Then
\begin{multline*}
\left\langle X_1(x),X_2(x) \right\rangle_x=\frac{1}{x}f_1(x)f_2(x)= 
\frac{1}{x}\int_0^1{\frac{\ud}{\ud s}(f_1(xs)f_2(xs))\ud s} = \\ 
= \int_0^1(f_1'(xs)f_2(xs)+f_1(xs)f_2'(xs))\ \ud s \in C^{\infty}.
\end{multline*}
To see that $\|\cdot\|$ is not continuous, take $X_n=\frac{1}{\sqrt n}\evaluate{\frac{\partial}{\partial x}}{\frac{1}{n}}$. Then $X_n$ converges to the zero tangent vector at $x=0$, but $\|X_n\|=1$.
\end{example}

\begin{definition}\index{subriemannian manifold} 
A subriemannian manifold is a manifold endowed with a smooth distribution and a smooth subriemannian metric on this distribution. 
\end{definition}

From now on, we will only consider smooth distributions and smooth subriemannian metrics. 

\begin{definition} \label{def:absolutelycontinuouscurve}\index{curve ! absolutely continuous}\index{absolutely continuous curve}
A curve $\gamma$ is called absolutely continuous if it is absolutely continuous in any chart, i.e. for any chart $(U,u)$ of $M$ and any interval $[t_0,t_1]$ such that $\gamma([t_0,t_1]) \subset U$, the curve $u \circ \gamma | [t_0,t_1]$ is absolutely continuous. 
\end{definition}

\begin{definition}\label{def:controlledcurve}\index{curve ! controlled} \index{controlled curve} \index{control}
A curve $\gamma:[a,b] \rightarrow M$ is said to be controlled by local vectorfields $X_1, \dots, X_k$ defined near $\gamma([a,b])$ if $\gamma$ is absolutely continuous and if there are $L^1$-functions $u_1, \dots, u_k$ such that the equation $$\dot\gamma(t)=\sum_{i=1}^k u_i(t)\ X_i(\gamma(t))$$ holds almost everywhere. $u_1, \dots, u_k$ are called the controls of $\gamma$ with respect to the vectorfields $X_1, \dots, X_k$. We also say that the controls steer the point $\gamma(a)$ to the point $\gamma(b)$.  
\end{definition}

\begin{definition}
\label{def:horizontalcurve}\index{curve ! horizontal}\index{horizontal ! curve}
A curve $\gamma$ is called horizontal if it is a concatenation of curves that are controlled by horizontal vectorfields.
\end{definition}

\begin{definition}
\label{def:accessibleset}\index{accessible set}\index{pathwise connected} \index{horizontally pathwise connected}
For a point $x \in M$, the accessible set $Acc(x)$ is the set of all points which can be reached by a horizontal curve starting at $x$. A set $S \subset M$ is called horizontally pathwise connected if any two points in $S$ can be connected by a horizontal curve lying in $S$. 
\end{definition}

\begin{lemma}
If $\gamma$ is a horizontal curve in a subriemannian manifold, then the function $t \mapsto \norm{\dot\gamma(t)}_{\gamma(t)}$ is integrable.
\end{lemma}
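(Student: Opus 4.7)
The plan is to reduce to a single piece of the concatenation and bound $\norm{\dot\gamma(t)}_{\gamma(t)}$ in $L^1$ by the triangle inequality. Since a horizontal curve is by Definition~\ref{def:horizontalcurve} a finite concatenation of curves controlled by horizontal vectorfields, it suffices to prove the statement for a single such piece; integrability is preserved under finite concatenations.

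Let $\gamma: [a,b] \to M$ be controlled by horizontal vectorfields $X_1, \dots, X_k$ defined near $\gamma([a,b])$, with controls $u_1, \dots, u_k \in L^1$. I would first apply the triangle inequality pointwise almost everywhere, giving
\begin{equation*}
\norm{\dot\gamma(t)}_{\gamma(t)} \leq \sum_{i=1}^k \abs{u_i(t)}\, \norm{X_i(\gamma(t))}_{\gamma(t)}.
\end{equation*}
The next step is to observe that each factor $t \mapsto \norm{X_i(\gamma(t))}_{\gamma(t)}^2 = \innerproduct{X_i(\gamma(t))}{X_i(\gamma(t))}_{\gamma(t)}$ is continuous on $[a,b]$: it is the composition of the continuous curve $\gamma$ with the smooth map $x \mapsto \innerproduct{X_i(x)}{X_i(x)}_x$, which is smooth by the assumed smoothness of the subriemannian metric on horizontal fields. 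Since $[a,b]$ is compact, these continuous functions are bounded by some constant $C$, so
\begin{equation*}
\norm{\dot\gamma(t)}_{\gamma(t)} \leq C \sum_{i=1}^k \abs{u_i(t)},
\end{equation*}
and the right-hand side lies in $L^1[a,b]$.

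What remains is measurability of $t \mapsto \norm{\dot\gamma(t)}_{\gamma(t)}$, which I would justify by the analogous expansion
\begin{equation*}
\norm{\dot\gamma(t)}_{\gamma(t)}^2 = \sum_{i,j=1}^k u_i(t) u_j(t)\, \innerproduct{X_i(\gamma(t))}{X_j(\gamma(t))}_{\gamma(t)},
\end{equation*}
valid almost everywhere, in which the metric coefficients are continuous in $t$ and the controls are measurable, so the whole expression is measurable, and $\sqrt{\cdot}$ preserves measurability. The only mildly delicate point is remembering that the controls $u_i$ are only $L^1$, so the squared expression need not be integrable; this is precisely why the linear triangle-inequality bound, rather than the quadratic expansion, is the right tool for integrability. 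No single step is a real obstacle; the argument rests on the compactness of $\gamma([a,b])$ combined with the assumed smoothness of the metric pairing of horizontal fields.
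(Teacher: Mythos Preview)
Your proof is correct and follows essentially the same route as the paper: reduce to one controlled piece, bound $\norm{\dot\gamma(t)}$ by a constant times $\sum_i |u_i(t)|$ using that the metric coefficients $\innerproduct{X_i(\gamma(t))}{X_j(\gamma(t))}$ are continuous on the compact interval, and conclude integrability from $u_i \in L^1$. The only cosmetic difference is that the paper expands $\norm{\dot\gamma}^2$ bilinearly and then takes a square root, whereas you apply the triangle inequality directly; both yield the same $L^1$ bound, and your added remark on measurability is a small refinement the paper leaves implicit.
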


\begin{proof}
A horizontal curve is a concatenation of curves controlled by horizontal vectorfields. If $\gamma$ is controlled by horizontal vectorfields $X_1,\dots,X_k$ with $L^1$-controls $u_1, \dots, u_k$, we have
\begin{equation*} \begin{split}
\norm{\dot\gamma}_{\gamma(t)}^2 & = \sum_{i=1}^k \sum_{j=1}^k \abs{u_i(t)} \abs{u_j(t)} \abs{\innerproduct{X_i(\gamma(t))}{X_j(\gamma(t))}_{\gamma(t)}} \\ 
& \leq K \left( \sum_{i=1}^k \abs{u_i(t)} \right) \left( \sum_{j=1}^k \abs{u_j(t)} \right) = K \left( \sum_{i=1}^k \abs{u_i(t)} \right)^2, 
\end{split}\end{equation*}
where $K$ bounds the continuous functions 
\begin{equation*}
t \mapsto \abs{\innerproduct{X_i(\gamma(t))}{X_j(\gamma(t))}_{\gamma(t)}} , \quad 1 \leq i,j \leq k. \qedhere
\end{equation*}
\end{proof}

\begin{definition} \label{def:lengthofacurve}\index{length of a curve}\index{curve ! length}
If $M$ is a subriemannian manifold, we define the length of a horizontal curve $\gamma:[a,b]\rightarrow M$ by
\begin{equation*}
l(\gamma) = \int_a^b \norm{\dot\gamma(t)}_{\gamma(t)} \ud t.
\end{equation*}
\end{definition}

\begin{definition}\label{def:subriemanniandistance}\index{subriemannian distance}\index{Carnot-Caratheodory distance}
The subriemannian distance, also called  Carnot-Carath\'{e}odory distance, between two points $x$ and $y$ on a subriemannian manifold is given by
\begin{equation*} d(x, y) = \inf l(\gamma), \end{equation*}
where the infimum is taken over all horizontal curves that connect $x$ to $y$. The distance is infinite if there is no such curve. The subriemannian ball of radius $\epsilon$ centered at $x \in M$ is denoted by
\begin{equation*} B(\epsilon, x) = \{y \in M : d(x, y) < \epsilon \}. \end{equation*}
\end{definition}

\begin{definition}\index{subriemannian ball}
A horizontal curve $\gamma$ that connects $x$ to $y$ is called a geodesic if $l(\gamma)=d(x,y)$. 
\end{definition}

\subsection{Accessibility and the set of horizontal curves}\index{accessible set}

We have defined the accessible set as the set of points reachably by a certain class of curves, namely the curves as in Definition \ref{def:horizontalcurve}. Theorems \ref{thm:accessiblesetisintegralmanifold} and \ref{thm:accessiblesetandendpointmap} confirm that our choice of admissible curves is a good one. It can be seen in the proofs of these theorems that the important point is that we require a curve to be controlled by horizontal vectorfields. \citet{Bellaiche} shows that instead of $L^1$-controls, we can use any reasonable smaller set of controls without changing the notion of accessibility:

\begin{theorem} 
Let $C$ be a dense subspace in $L^1([0,1];\mathbb R^k)$. Then any point accessible from $x$ by means of controls in $L^1([0,1];\mathbb R^k)$ is also accessible from $x$ by means of controls in $C$. 
\end{theorem}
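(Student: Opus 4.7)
The plan is to combine continuity of the endpoint map in the $L^1$ topology with density of $C$ and an implicit function argument that chooses the correction inside the subspace $C$. Without loss of generality, since a horizontal curve is a finite concatenation of pieces controlled by horizontal vectorfields, I may assume the curve from $x$ to $y$ is a single piece: fix horizontal vectorfields $X_1, \dots, X_k$ and a control $u \in L^1([0,1]; \mathbb{R}^k)$ such that the solution $\gamma_u$ of $\dot\gamma = \sum_i u_i X_i(\gamma)$ with $\gamma_u(0) = x$ satisfies $\gamma_u(1) = y$. The task is to produce a horizontal curve from $x$ to $y$ whose controls lie in $C$.

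I would study the endpoint map
\begin{equation*}
E : \mathcal{U} \to M, \qquad E(u) = \gamma_u(1),
\end{equation*}
defined on the open set $\mathcal{U} \subset L^1([0,1]; \mathbb{R}^k)$ of controls for which $\gamma_u$ exists on $[0,1]$. A Gronwall estimate shows $E$ is $L^1$-continuous, and a variation-of-constants calculation gives
\begin{equation*}
dE_u(\delta u) \;=\; \sum_{i=1}^k \int_0^1 \delta u_i(t)\,\Phi(1,t)\,X_i(\gamma_u(t))\,\ud t,
\end{equation*}
where $\Phi(1,t)$ is the fundamental matrix of the linearisation of the system along $\gamma_u$.

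The correction step is the heart of the argument. By density I pick $\tilde u \in C$ with $\norm{\tilde u - u}_{L^1}$ small, so that by continuity of $E$ the endpoint $E(\tilde u)$ is close to $y$. Viewing $y$ inside the accessible set $Acc(x)$, which by the theory of Stefan and Sussmann is an immersed submanifold of $M$, the image of $dE_u$ lies in $T_y Acc(x)$ and in fact spans it. Since $C$ is a dense linear subspace of $L^1$ and $dE_u$ is continuous, the image $dE_u(C)$ is dense, and hence equal, to the finite-dimensional space $T_y Acc(x)$. I choose $c_1, \dots, c_n \in C$ whose images under $dE_u$ form a basis of $T_y Acc(x)$ and apply the classical implicit function theorem to the smooth finite-dimensional map $(\lambda_1, \dots, \lambda_n) \mapsto E(\tilde u + \sum_j \lambda_j c_j)$ near the origin; this yields small $\lambda_j$ with $E(\tilde u + \sum_j \lambda_j c_j) = y$, so the control $v = \tilde u + \sum_j \lambda_j c_j$ belongs to $C$ and steers $x$ to $y$.

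The main obstacle is the surjectivity of $dE_u$ onto $T_y Acc(x)$: at a generic reference control this follows from the Stefan--Sussmann picture, but at a singular $u$ it may fail. In that case I would either perturb $u$ slightly to a nearby regular control, or split the interval $[0,1]$ into two subintervals and apply the correction only on the second one, exploiting the extra flexibility gained by letting the ``midpoint'' vary as well as the final control; the delicate verification is that such a regularisation is always available for an accessible endpoint, which hinges on the fact that $Acc(x)$ is an honest immersed submanifold containing $y$.
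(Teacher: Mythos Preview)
The paper does not actually prove this theorem; it is quoted from Bella\"{\i}che and stated without proof. So there is no ``paper's own proof'' to compare against. Let me instead assess your argument on its merits, using the machinery the paper develops later in Section~\ref{sec:thirdproofofchow}.

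Your strategy is the right one and is essentially how such results are proved: approximate $u$ by $\tilde u\in C$, then correct within $C$ using a finite-dimensional inverse function theorem applied to $\lambda\mapsto E(\tilde u+\sum_j\lambda_j c_j)$. Two points deserve sharpening.

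First, the surjectivity issue you flag is not ``delicate'' once one has the paper's Lemma~\ref{lem:normallyaccessible}: every accessible $y$ is reached by a \emph{normal} control, obtained by pre-composing any control $u$ with a loop $v*\check v$ for a fixed normal $v$. So you may assume from the outset that $u$ is normal, i.e.\ that $dE_u$ has maximal rank, equal to $\dim Acc(x)$. Since the set of normal controls is open and $dE$ is continuous (the paper shows $E$ is $C^\infty$), your $c_1,\dots,c_n\in C$ chosen so that $dE_u(c_j)$ span $T_yAcc(x)$ will still give a spanning set $dE_{\tilde u}(c_j)$ for $\tilde u$ close enough to $u$. This removes the need for your alternative ``split the interval'' device.

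Second, there is a topological subtlety you pass over: the inverse function theorem gives you a neighbourhood of $E(\tilde u)$ in the \emph{submanifold} topology of $Acc(x)$, and you need $y$ to lie in that neighbourhood. Closeness of $E(\tilde u)$ to $y$ in $M$ does not a priori imply closeness in $Acc(x)$, since the inclusion is only an immersion. The clean fix, in the paper's language, is that near a normal control the Constant Rank Theorem (Theorem~\ref{thm:constantranktheorem}) exhibits $E$ as a smooth submersion onto an $r$-dimensional slice; hence $\tilde u\to u$ in $L^1$ forces $E(\tilde u)\to y$ in the manifold $Acc(x)$, and your corrected control $\tilde u+\sum_j\lambda_j c_j\in C$ exists.

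With these two clarifications your argument is complete and matches the spirit of the endpoint-map analysis in Section~\ref{sec:thirdproofofchow}.
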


$C$ can even be the set of piecewise constant functions as will be seen in Section \ref{sec:orbits} about orbits. 

It is a different question whether we can enlarge the class of horizontal curves without changing the notion of accessibility. For example it would be tempting to drop the concept of controllability and instead define a curve to be horizontal if it is absolutely continuous and if its derivative lies in $H$ whenever it exists. This works out for regular distributions as will be shown in Lemma \ref{lem:horizontalhorizontal} below, but we will see in the following example that generally this might change the notion of accessibility. 

\begin{example}
Let $H$ be the distribution on $\mathbb R$ that has rank zero at points $x \leq 0$ and rank one at points $x>0$. Then $H$ is smooth. It is easy to construct an absolutely continuous curve $\gamma$ with $\gamma(0)=0$ and $\gamma(t)>0$ for $t>0$ such that $\dot\gamma(0)$ either vanishes or does not exist. But such a curve is not controllable by any horizontal vectorfield because controllability implies that the curve is constant. To show this, let us assume that $\gamma$ is controlled by a horizontal vectorfield $X$. Then $\gamma$ satisfies the differential equation
$$\dot\gamma(t)=u(t)\ X(\gamma(t)), \quad \gamma(0) =0$$
with $u \in L^1([0,1];\mathbb R)$. By Theorem \ref{thm:controlledcurve} the above differential equation has a unique solution for small $t$. But also the constant curve is a solution to the above equation, and therefore $\gamma$ is constant. 
\end{example}

\begin{lemma}\label{lem:finitely generated}
A regular, smooth distribution is locally finitely generated, and it is a vector subbundle of the tangent bundle. 
\end{lemma}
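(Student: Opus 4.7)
The plan is to work locally around a point $x_0\in M$, produce a horizontal frame by picking vectorfields whose values at $x_0$ span $H_{x_0}$ and then using that linear independence is an open condition, and finally complete this frame to a frame of $TM$ in order to read off a vector bundle chart.

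More precisely, set $k=\dim H_{x_0}$. By regularity, there is a neighbourhood $U_0$ of $x_0$ on which $\dim H_x\equiv k$. Since $H$ is smooth, the vectors $\{X(x_0):X\in\mathfrak{X}_{loc,H}(M)\}$ linearly span $H_{x_0}$, so I can select local horizontal vectorfields $X_1,\dots,X_k$ defined near $x_0$ such that $X_1(x_0),\dots,X_k(x_0)$ is a basis of $H_{x_0}$.

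The next step is to verify that $X_1,\dots,X_k$ remain pointwise linearly independent near $x_0$. Working in a chart, linear independence of $X_1(x),\dots,X_k(x)$ is equivalent to the non-vanishing of some $k\times k$ minor of the matrix formed by their coordinates, which is a continuous (even smooth) function of $x$. Hence there is a (possibly smaller) neighbourhood $U\subseteq U_0$ on which $X_1(x),\dots,X_k(x)$ are linearly independent. Since each $X_i(x)\in H_x$ and $\dim H_x=k$ on $U$, these vectorfields form a basis of $H_x$ at every point of $U$, which is precisely local finite generation.

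For the subbundle statement, I would complete $X_1,\dots,X_k$ to a smooth frame of $TM$ on a neighbourhood $V\subseteq U$ of $x_0$: starting from coordinate vectorfields $\partial_1,\dots,\partial_n$ of a chart around $x_0$, the same continuity-of-determinant argument lets me choose indices $i_1,\dots,i_{n-k}$ so that $X_1,\dots,X_k,\partial_{i_1},\dots,\partial_{i_{n-k}}$ is a local frame on some $V$. This frame gives a vector bundle isomorphism $\Phi:V\times\mathbb{R}^n\to TM|_V$ under which $H|_V$ corresponds to $V\times(\mathbb{R}^k\times\{0\})$, because $H_x$ is exactly the linear span of $X_1(x),\dots,X_k(x)$. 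This is a vector bundle chart of $TM$ adapted to $H$, so $H$ is a smooth rank-$k$ vector subbundle of $TM$.

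The only subtle point is the transition from ``$X_1(x_0),\dots,X_k(x_0)$ span $H_{x_0}$'' to ``$X_1,\dots,X_k$ span $H$ on a whole neighbourhood''. This step relies crucially on regularity: without the hypothesis $\dim H_x\equiv k$, linear independence would still persist, but $X_1(x),\dots,X_k(x)$ need no longer exhaust $H_x$. Everything else is either continuity of determinants or a straightforward frame-completion argument, so this is the single conceptual ingredient of the proof.
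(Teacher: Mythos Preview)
Your proof is correct and follows essentially the same approach as the paper: pick horizontal vectorfields spanning $H_{x_0}$, use openness of linear independence together with regularity to get a local horizontal frame, then complete it to a local frame of $TM$ to obtain the subbundle chart. Your write-up is in fact a bit more explicit than the paper's about why linear independence persists and about the role of regularity.
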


\begin{proof}
Fix an arbitrary point $x \in M$, and let $k$ be the rank of the distribution $H$ near $x$. Because $H$ is smooth, there are local horizontal vectorfields $X_1, \dots, X_k$ such that $X_1(x), \dots, X_k(x)$ span $H_x$. $X_1, \dots, X_k$ are linearly independent also in some neighbourhood of $x$, so they span $H$ in this neighbourhood, and therefore $H$ is locally finitely generated. 

Now take arbitrary vectorfields $X_{k+1}, \dots, X_n$ defined near $x$ such that the tangent vectors $X_1(x), \dots, X_n(x)$ are linearly independent. Then $X_1, \dots, X_n$ are linearly independent also in some neighbourhood of $x$, and so they form a local frame of the tangent space near $x$. To every tangent vector in the tangent space near $x$, we can associate its coordinates in terms of this local frame. This yields a local trivialization for $H$, and so $H$ is a subbundle of $TM$. 
\end{proof}

\begin{lemma} \label{lem:riemannianmetric}
If $\innerproduct{\cdot}{\cdot}$ is a smooth subriemannian metric on a regular smooth distribution $H$, then $\innerproduct{\cdot}{\cdot}$ can be extended to a riemannian metric. 
\end{lemma}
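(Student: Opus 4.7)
The strategy is the standard partition-of-unity gluing, but with care that the final metric really restricts to the given subriemannian metric on $H$.

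First I would invoke Lemma \ref{lem:finitely generated} to treat $H$ as a smooth vector subbundle of $TM$. Fix any point $x\in M$, let $k$ be the rank of $H$ near $x$, and choose local horizontal vectorfields $X_1,\dots,X_k$ forming a frame of $H$ on a neighbourhood $U$ of $x$. Extend this to a local frame $X_1,\dots,X_n$ of $TM$ on $U$ as in the proof of the previous lemma. On $U$, every tangent vector $v\in T_yM$ has unique coordinates $v=\sum_i a_i X_i(y)$, and I would define a local extension $g^U$ of the subriemannian metric by
\begin{equation*}
g^U_y\!\left(\sum_i a_iX_i(y),\sum_j b_jX_j(y)\right) = \sum_{i,j=1}^k a_ib_j\innerproduct{X_i(y)}{X_j(y)}_y + \sum_{i=k+1}^n a_ib_i.
\end{equation*}
In other words, on the horizontal part I take the given subriemannian inner product, and I declare the chosen complementary frame $X_{k+1},\dots,X_n$ to be orthonormal and orthogonal to $H$. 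Smoothness of $g^U$ follows from smoothness of the subriemannian metric (applied to the horizontal frame $X_1,\dots,X_k$). By construction, for horizontal $v,w\in H_y$ one has $g^U_y(v,w) = \innerproduct{v}{w}_y$, so $g^U$ extends the given metric on $U$.

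Now cover $M$ by such neighbourhoods $\{U_\alpha\}$ with local extensions $g^\alpha$, and pick a smooth partition of unity $\{\phi_\alpha\}$ subordinate to this cover (this exists since $M$ is paracompact). Define
\begin{equation*}
g = \sum_\alpha \phi_\alpha\, g^\alpha.
\end{equation*}
A pointwise convex combination of positive definite inner products is positive definite, so $g$ is a smooth riemannian metric on $M$. The key check is that $g$ still extends the subriemannian metric: for any $y\in M$ and any $v,w\in H_y$, every $g^\alpha_y(v,w)$ that contributes equals $\innerproduct{v}{w}_y$, hence
\begin{equation*}
g_y(v,w) = \sum_\alpha \phi_\alpha(y)\innerproduct{v}{w}_y = \innerproduct{v}{w}_y.
\end{equation*}

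The only conceptual obstacle is making sure each local extension genuinely restricts to $\innerproduct{\cdot}{\cdot}$ on $H$, which is exactly why I insist on extending a horizontal frame rather than choosing an arbitrary local frame of $TM$ and then orthogonalizing; once the local extensions agree with the subriemannian metric on $H$, the partition-of-unity gluing preserves this property automatically.
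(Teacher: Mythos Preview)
Your proof is correct and follows essentially the same route as the paper: build a local riemannian extension from a frame adapted to $H$, then glue via a partition of unity. The paper's only cosmetic difference is that it first Gram--Schmidt orthonormalizes the horizontal frame $X_1,\dots,X_k$, so that the local metric can be written simply as $g(X_i,X_j)=\delta_{ij}$; your version skips that step and writes the horizontal block explicitly, and you are more careful than the paper in checking that the convex combination $\sum_\alpha \phi_\alpha g^\alpha$ still restricts to $\innerproduct{\cdot}{\cdot}$ on $H$.
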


\begin{proof}
Locally, we can find a frame $X_1, \dots, X_n$ for $TM$ such that $X_1, \dots X_k$ are horizontal and orthonormal with respect to $\innerproduct{\cdot}{\cdot}$. On the common domain of definition of the vectorfields $X_i$, we can define a riemannian metric $g$ via 
$$g(X_i,X_j)= \begin{cases}0 & \text{ for } i \neq j \\ 1 & \text{ for } i=j. \end{cases}$$ 
To get a riemannian metric defined on all of $M$, we patch together the obtained local riemannian metrics using a partition of unity. 
\end{proof}

\begin{lemma}\label{lem:horizontalhorizontal}
If $H$ is a regular smooth distribution, then a curve is horizontal if and only if it is absolutely continuous and its derivative lies in $H$ whenever it exists. 
\end{lemma}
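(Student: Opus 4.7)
The plan is to prove both implications. Lemma \ref{lem:finitely generated} provides the crucial structural input: since $H$ is a regular smooth distribution it is a subbundle of $TM$, and every point has a neighbourhood $U$ on which $H$ has a horizontal frame $X_1, \dots, X_k$ that extends to a frame $X_1, \dots, X_n$ of $TM|_U$ with smooth dual coframe $\omega^1, \dots, \omega^n$.

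The ``only if'' direction is essentially a matter of unpacking definitions. By Definition \ref{def:horizontalcurve}, a horizontal curve $\gamma$ is a concatenation of curves controlled by horizontal vectorfields; each such piece is absolutely continuous by Definition \ref{def:controlledcurve}, hence so is $\gamma$. On each piece one has $\dot\gamma(t) = \sum_i u_i(t) X_i(\gamma(t))$ for a.e.\ $t$, and the right-hand side lies in $H_{\gamma(t)}$ since the $X_i$ are horizontal. As the derivative of an absolutely continuous curve exists almost everywhere, this yields $\dot\gamma(t) \in H_{\gamma(t)}$ wherever it exists.

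The substantive direction is ``if''. Suppose $\gamma \colon [a,b] \to M$ is absolutely continuous with $\dot\gamma(t) \in H_{\gamma(t)}$ wherever the derivative exists. By compactness of $\gamma([a,b])$ and continuity of $\gamma$, I partition $[a,b] = [t_0, t_1] \cup \dots \cup [t_{N-1}, t_N]$ so that each $\gamma([t_{i-1}, t_i])$ lies in a chart $U_i$ on which $H$ admits a horizontal frame, extended to a full frame of $TM|_{U_i}$ with dual coframe as above. On each subinterval I define candidate controls
\[ u_j(t) := \evaluate{\omega^j}{\gamma(t)} (\dot\gamma(t)), \qquad j = 1, \dots, k. \]
Since $\omega^j|_x$ annihilates $H_x$ for $j > k$ and $\dot\gamma(t) \in H_{\gamma(t)}$ a.e., the decomposition $\dot\gamma(t) = \sum_{j=1}^k u_j(t) X_j(\gamma(t))$ holds a.e.\ on $[t_{i-1}, t_i]$. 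To check that each $u_j$ is in $L^1$, note that $\dot\gamma$ is $L^1$ in the chart by absolute continuity and that $t \mapsto \omega^j|_{\gamma(t)}$ is continuous, hence bounded on the compact image of $\gamma$ restricted to $[t_{i-1}, t_i]$. Each subcurve is therefore controlled by horizontal vectorfields, so $\gamma$ is horizontal as a concatenation.

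The main obstacle, and the place where the regularity of $H$ is genuinely used, is the very availability of a horizontal frame on each chart together with its extension to a frame of the full tangent bundle; this is exactly the content of Lemma \ref{lem:finitely generated}, and it is precisely what distinguishes the regular case from the pathological example preceding this lemma, in which a curve satisfying the almost-everywhere derivative condition still fails to be controllable by any horizontal vectorfield.
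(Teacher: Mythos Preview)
Your proof is correct and follows the same overall architecture as the paper's: unwind definitions for the forward direction, then for the converse partition $[a,b]$ so that each piece lands in a chart carrying a horizontal frame, and read off $L^1$ controls. The only genuine difference is in how you extract the controls: you use the dual coframe $\omega^1,\dots,\omega^n$ coming from the extended frame of Lemma~\ref{lem:finitely generated}, whereas the paper takes an orthonormal horizontal frame, extends the subriemannian metric to a Riemannian one via Lemma~\ref{lem:riemannianmetric}, and projects with the inner product. Your route is arguably more economical since it avoids invoking Lemma~\ref{lem:riemannianmetric}; the paper's route makes the $L^1$ verification slightly more explicit by first writing $\dot\gamma$ in coordinate components. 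Either way the $L^1$ check reduces to the same observation: a bounded continuous function times an $L^1$ function is $L^1$.
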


\begin{proof}\mbox{}\par
\begin{enumerate}[(1)]

\item To show the one implication, let $\gamma$ be a horizontal curve. Then by definition $\gamma$ is a concatenation of curves controlled by horizontal vectorfields. Therefore it is absolutely continuous and its derivative lies in $H$ whenever it exists. 

\item It remains to show the other implication, so let $\gamma:[a,b]\rightarrow M$ be an absolutely continuous curve whose derivative is horizontal whenever it exists. Let $k$ be the rank of $H$ near $\gamma([a,b])$. We can find $a=t_0<t_1<\dots<t_n=b$ such that $\gamma([t_i,t_{i+1}])$ is contained in some open set $U_i$, $U_i$ lies in the domain of some chart, and on each set $U_i$ there is a set of orthonormal vectorfields $X_{i,1}, \dots, X_{i,k} \in \mathfrak X_H(U_i)$ spanning $H$. $\gamma$ is absolutely continuous, so by definition there are functions $v_{i,l} \in L^1([t_i,t_{i+1}];\mathbb R)$ such that for $t_i \leq t \leq t_{i+1}$ we have
$$\dot\gamma(t)=\sum_{l=1}^n v_{i,l}(t) \frac{\partial}{\partial x_l}(\gamma(t)).$$
We can extend the given subriemannian metric to a riemannian metric and use the same symbol $\innerproduct{\cdot}{\cdot}$ for it. Then for $t_i \leq t \leq t_{i+1}$ we have
\begin{equation*}\begin{split}
\dot\gamma(t) & =\sum_{j=1}^k \innerproduct{\dot\gamma(t)}{X_{i,j}(\gamma(t))}\ X_{i,j}(\gamma(t)) \\
& = \sum_{j=1}^k \underbrace{ \left( \sum_{l=1^n} v_{i,l}(t) \innerproduct{\frac{\partial}{\partial x_l}(\gamma(t))}{X_{i,j}(\gamma(t))} \right)}_{\in L^1([t_i,t_{i+1}];\mathbb R)}\ X_{i,j}(\gamma(t)),
\end{split}\end{equation*}
so on each interval $[t_i,t_{i+1}]$, $\gamma$ is controlled by horizontal vectorfields. \qedhere

\end{enumerate}
\end{proof}

\section{Basics of differential geometry}\label{sec:basics}

\subsection{Vectorfields and flows}\label{sec:vectorfieldsandflows}

The following basic definitions and Lemmas are taken from \citet{MichorNotes}.

\begin{definition}\label{def:localdiffeomorphism} \index{local diffeomorphism}
A local diffeomorphism on $M$ is a $C^\infty$-diffeomorphism from an open subset $U$ of $M$ onto an open subset $V$ of $M$. If $\varphi_i:U_i\rightarrow V_i (i=1,2)$ are local diffeomorphisms, then the composite $\varphi_1 \circ \varphi_2$ is a local diffeomorphism, with domain $\varphi_2^{-1}(U_1)$ and range $\varphi_1(V_1 \cap U_1)$. The inverse of $\varphi_1$ is denoted by $\varphi_1^{-1}$, and is a local diffeomorphism with domain $V_1$ and range $U_1$. The formal laws 
\begin{displaymath}
(\varphi_1 \circ \varphi_2) \circ \varphi_3 = \varphi_1 \circ ( \varphi_2 \circ \varphi_3) \quad \text{and}\quad 
(\varphi_1 \circ \varphi_2)^{-1} = \varphi_2^{-1} \circ \varphi_1^{-1}
\end{displaymath}
are clearly valid. 
\end{definition}

\begin{definition}\label{def:curveoflocaldiffeo} \index{curve ! of local diffeomorphisms} \index{local flow} \index{flow ! local}
A curve of local diffeomorphisms through the identity on $M$ is a smooth mapping $\varphi : U \rightarrow M$ defined on some open set $U \subset \mathbb{R} \times M$ such that
\begin{enumerate}
\item[(a)] $U$ contains $\left\{0\right\} \times M$ and for all points $x \in M$, the set $U \cap \mathbb{R} \times \{x\}$ is open and connected. 
\item[(b)] $\varphi_t:\ \ U \cap (\left\{t\right\} \times M) \rightarrow M,\ x \mapsto \varphi(t,x)$ is a local diffeomorphism.
\item[(c)] $\varphi_0 = \Id_M$.
\end{enumerate}
$\varphi$ is called a local flow if in addition, it satisfies
\begin{enumerate}
\item[(d)] $\varphi(t, \varphi(s,x)) = \varphi(t+s,x)$ for all $t, s, x$ such that both sides of the equation are defined.
\end{enumerate}
\end{definition}

\begin{definition} \index{infinitesimal generator}
The infinitesimal generator of a local flow $\varphi : U \rightarrow M$ is the vectorfield $X_\varphi$ given by $x \mapsto \evaluate{\frac{\partial}{\partial t}}{0} \varphi(t,x)$.
\end{definition}

The theory of flows and vectorfields says that for each vectorfield $X$, there is a unique local flow whose infinitesimal generator is $X$ and whose domain of definition is maximal with respect to inclusion. 

\begin{definition} \index{maximal local flow} \index{flow ! maximal local}
This flow is called the maximal local flow of $X$ and will be denoted by $Fl^X$.
\end{definition}

\begin{remark}
As in Definition \ref{def:curveoflocaldiffeo}, we will use the notations $Fl^X(t,x)$ and $Fl^X_t(x)$ to denote the same thing.
\end{remark}

\begin{remark}
If $X$ is only a local (and not a global) vectorfield, then its maximal local flow is a curve of local diffeomorphism through the identity on the domain of definition of $X$.
\end{remark}

If $f : M \rightarrow M$ is a diffeomorphism, then for any vectorfield $X \in \mathfrak{X}(M)$ the following mappings are vectorfields as well:
$$f^*X := Tf^{-1} \circ X \circ f , \quad f_*X :=Tf \circ X \circ f^{-1}.$$

\begin{definition} \index{pullback} \index{pushforward} \index{vectorfield ! pullback} \index{vectorfield ! pushforward}
In the above situation, $f^*X$ is called the pullback of $X$ along $f$, and $f_*X$ the pushforward of $X$ along $f$. 
\end{definition} 

\begin{definition} \index{f-related} \index{vectorfield ! f-related}
Let $f : M \rightarrow N$ be a smooth mapping. Two vectorsfields $X \in \mathfrak{X}(M)$ and $Y \in \mathfrak{X}(M)$ are called $f$-related, if $ Tf \circ X = Y \circ f$ holds, i.e. if the following diagram commutes:
\begin{equation*}\begin{CD}
TM @>Tf>> TN\\
@AAXA @AAYA\\
M @>f>> N
\end{CD}\end{equation*}
\end{definition} 

\begin{lemma}\label{lem:frelated}
Let $X \in \mathfrak{X}(M)$ and $Y \in \mathfrak{X}(M)$ be $f$-related vectorfields for a smooth mapping $f: M \rightarrow N$. Then we have $f \circ Fl^X_t= Fl^Y_t \circ f$, whenever both sides are defined. In particular, if $f$ is a diffeomorphism, we have $Fl^{f^*Y}_t=f^{-1} \circ Fl^Y_t \circ f$.
\end{lemma}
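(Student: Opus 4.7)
The plan is to fix a point $x \in M$ and reduce the identity $f \circ Fl^X_t = Fl^Y_t \circ f$ to a uniqueness-of-integral-curves argument. Define the curve $c(t) := f(Fl^X_t(x))$ on the set of $t$ for which the left-hand side is defined; I want to show that $c$ is an integral curve of $Y$ starting at $f(x)$, and then invoke uniqueness of integral curves together with the maximality of $Fl^Y$ to conclude $c(t) = Fl^Y_t(f(x))$.

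The two main computations are the initial condition and the velocity. The initial condition is immediate: $c(0) = f(Fl^X_0(x)) = f(x)$. For the velocity, I would differentiate via the chain rule,
\begin{equation*}
\dot c(t) = Tf \cdot \frac{\partial}{\partial t} Fl^X_t(x) = Tf \cdot X(Fl^X_t(x)),
\end{equation*}
and then use the $f$-relatedness $Tf \circ X = Y \circ f$ to rewrite this as $Y(f(Fl^X_t(x))) = Y(c(t))$. Thus $c$ solves $\dot c = Y \circ c$ with $c(0)= f(x)$, which is exactly the defining ODE for $Fl^Y_\cdot(f(x))$. Uniqueness of solutions then gives $f(Fl^X_t(x)) = Fl^Y_t(f(x))$ on the common interval of definition.

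For the second statement, I would check that $f^*Y$ and $Y$ are $f$-related when $f$ is a diffeomorphism: by the definition $f^*Y = Tf^{-1} \circ Y \circ f$, so
\begin{equation*}
Tf \circ f^*Y = Tf \circ Tf^{-1} \circ Y \circ f = Y \circ f,
\end{equation*}
which is the $f$-relatedness condition. Applying the first part with $X$ replaced by $f^*Y$ yields $f \circ Fl^{f^*Y}_t = Fl^Y_t \circ f$, and composing with $f^{-1}$ on the left gives the desired formula.

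I do not expect a serious obstacle; the only subtle point is book-keeping of domains of definition, i.e.\ making sure that whenever the left-hand side $f(Fl^X_t(x))$ is defined, the maximality of $Fl^Y$ ensures that $Fl^Y_t(f(x))$ is defined and equals it. This is precisely the content of the qualifier ``whenever both sides are defined'', and it follows because the curve $c$ constructed above is an integral curve of $Y$ on its interval of definition, and any integral curve of $Y$ is, by definition of the maximal flow, a restriction of $Fl^Y_\cdot(f(x))$.
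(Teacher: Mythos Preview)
Your proposal is correct and follows essentially the same approach as the paper: define $c(t)=f(Fl^X_t(x))$, verify via the chain rule and $f$-relatedness that it is an integral curve of $Y$ with initial value $f(x)$, and conclude by uniqueness. You even supply the verification that $f^*Y$ is $f$-related to $Y$ for the second statement, which the paper leaves implicit.
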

\begin{proof}
We have $$ \frac{\partial}{\partial t}f \circ Fl^X_t=Tf \circ \frac{\partial}{\partial t}Fl^X_t=Tf \circ X\circ Fl^X_t = Y \circ f \circ Fl^X_t $$ and  $f(Fl^X(0,x))=f(x)$. So $t \mapsto f(Fl^X(t,x))$ is an integral curve of the vectorfield $Y$ on $N$ with initial value f(x), so we have 
\begin{equation*}
f(Fl^X(t,x))=f(Fl^Y(t,f(x))) \text{ or } f \circ Fl^X_t = Fl^Y_t \circ f. \qedhere
\end{equation*}
\end{proof}

\begin{remark}
By the prescription $\mathcal L_X(f)(x):=X(x)(f)$, each vectorfield $X \in \mathfrak X(M)$ defines a derivation $\mathcal L_X$ on the algebra $C^\infty(M)$ of smooth functions on $M$, i.e. a mapping $\mathcal L_X:C^\infty(M)\rightarrow C^\infty(M)$ with $\mathcal L_X(f\cdot g)=\mathcal L_X(f)\cdot g+f\cdot \mathcal L_X(g)$. This defines a vectorspace isomorphism between $\mathfrak X(M)$ and the space of derivations on $C^\infty(M)$. Therefore we can identify a vectorfield $X$ with  the derivation $\mathcal L_X$. We will often write $X$ instead of $\mathcal L_X$ when the meaning is clear from the context. 
\end{remark}

\begin{definition} \index{Lie bracket} \index{bracket ! of vectorfields}
For $X,Y \in \mathfrak{X}(M)$, the Lie bracket $[X , Y] \in \mathfrak X(M)$ is the unique vectorfield that satisfies the formula $\mathcal L_{[X,Y]} = \mathcal L_X \circ \mathcal L_Y - \mathcal L_Y \circ \mathcal L_X$. The definition of the Lie bracket can be extended to arbitrary bracket expressions $B(X_1, \dots , X_k)$ in the obvious way. 
\end{definition}

\begin{lemma} \label{lem:frelated2}
Let $f: M \rightarrow N$ be a smooth map. If $X_1 \in \mathfrak X(M)$ is $f$-related to $Y_1 \in \mathfrak X(N)$ and $X_2 \in \mathfrak X(M)$ is $f$-related to $Y_2 \in \mathfrak X(N)$, then $\lambda_1 X_1 + \lambda_2 X_2$ is $f$-related to $\lambda_1 Y_1 + \lambda_2 Y_2$ and $[X_1,X_2]$ is $f$-related to $[Y_1,Y_2]$.
\end{lemma}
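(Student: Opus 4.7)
The plan is to reduce $f$-relatedness to a statement about derivations on $C^\infty(N)$ and then let the algebra do the work.

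For the linearity claim, I would work directly from the definition. Since the tangent map $Tf$ is linear on each fibre, the equation $Tf \circ (\lambda_1 X_1 + \lambda_2 X_2) = \lambda_1 (Tf \circ X_1) + \lambda_2 (Tf \circ X_2)$ holds pointwise, and then substituting the two $f$-relatedness hypotheses gives $(\lambda_1 Y_1 + \lambda_2 Y_2) \circ f$. This is a two-line computation and I do not expect any difficulty.

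For the bracket claim, I first establish the reformulation that $X \in \mathfrak{X}(M)$ is $f$-related to $Y \in \mathfrak{X}(N)$ if and only if $\mathcal{L}_X(g \circ f) = \mathcal{L}_Y(g) \circ f$ for every $g \in C^\infty(N)$. The forward direction is immediate from $T_x f(X(x))(g) = X(x)(g \circ f)$ together with $Y(f(x))(g) = \mathcal{L}_Y(g)(f(x))$; the converse follows because the derivations at a point detect the tangent vector uniquely. Once this is in hand, the bracket relation becomes a purely algebraic manipulation: for $g \in C^\infty(N)$,
\begin{equation*}
\mathcal{L}_{[X_1,X_2]}(g\circ f) = \mathcal{L}_{X_1}\mathcal{L}_{X_2}(g\circ f) - \mathcal{L}_{X_2}\mathcal{L}_{X_1}(g\circ f),
\end{equation*}
and applying the characterization twice on each term collapses this to $\mathcal{L}_{[Y_1,Y_2]}(g) \circ f$.

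The only point that requires any care is checking the characterization in both directions; the bracket identity itself is formal once that is set up. I do not foresee a genuine obstacle here, since the derivation viewpoint has already been introduced in the remark preceding the definition of the Lie bracket, so I can invoke it without additional setup.
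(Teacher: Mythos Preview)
Your proposal is correct and follows essentially the same approach as the paper: both reduce $f$-relatedness to the derivation identity $\mathcal L_X(g\circ f) = (\mathcal L_Y g)\circ f$ and then verify the bracket statement by a direct algebraic computation. The only cosmetic difference is that the paper phrases the argument pointwise using germs of smooth functions at $f(x)$, whereas you work with global $g\in C^\infty(N)$; since the manifolds here admit bump functions, the two formulations are interchangeable.
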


\begin{proof}
The first assertion is immediate. To prove the second, we view tangent vectors as derivations on the algebra of germs of smooth functions. Let $h$ be the germ of a smooth function on $N$ at $f(x)$. Then $h \circ f$ is the germ of a smooth function on $M$ at $x$. By assumption, we have $Tf \circ X_i = Y_i \circ f$ for $i=1,2$, thus:
\begin{multline*}
\bigl(X_i(x)\bigr)(h\circ f)=\bigl(T_x f . X_i(x)\bigr)(h)=\bigl((Tf\circ X_i)(x)\bigr)(h)=\\
=\bigl((Y_i\circ f)(x)\bigr)(h)=\bigl(Y_i(f(x))\bigr)(h).
\end{multline*}
So we have
$\mathcal L_{X_i}(h \circ f)=\bigl(\mathcal L_{Y_i}(h)\bigr) \circ f$, and therefore
\begin{align*}
\mathcal L_{[X_1,X_2]}(h \circ f) 
&=\mathcal L_{X_1} \bigl( \mathcal L_{X_2}(h \circ f)\bigr)- \mathcal L_{X_2} \bigl(\mathcal L_{X_1}(h \circ f)\bigr)\\
&= \mathcal L_{X_1}\Bigl( \bigl(  \mathcal L_{Y_2}(h) \bigr) \circ f \Bigr)-
\mathcal L_{X_2}\Bigl( \bigl(  \mathcal L_{Y_1}(h) \bigr) \circ f \Bigr)\\
&=\Bigl(\mathcal L_{Y_1} \bigl( \mathcal L_{Y_2}(h)\bigr)\Bigr)\circ f - 
\Bigl(\mathcal L_{Y_2} \bigl( \mathcal L_{Y_1}(h)\bigr)\Bigr)\circ f 
=\bigl(\mathcal L_{[Y_1,Y_2]}(h)\bigr)\circ f.
\end{align*}
But this means $Tf \circ [X_1,X_2]=[Y_1,Y_2]\circ f$.
\end{proof}

\subsection{Submanifolds}\label{sec:submanifolds}

\begin{definition} \index{submanifold}
A subset $N$ of a manifold $M$ is called a submanifold of $M$ if for each $p \in N$ there is a chart $(U, \varphi)$ in $M$ with $p \in U$ and $n \in \mathbb{N}$ such that
\begin{displaymath}
\varphi: U \rightarrow \mathbb{R}^{n+k} = \mathbb{R}^n \times \mathbb{R}^k \quad \text{and} \quad
\varphi(N \cap U) = \varphi(U) \cap \left( \mathbb{R}^n \times \left\{0\right\} \right).
\end{displaymath}
\end{definition}

One of the most important tools for dealing with submanifolds is the Constant Rank Theorem. Proofs of the Constant Rank Theorem can be found in the books of \citet{Jaenich} and \citet{Abraham}. In Section \ref{sec:thirdproofofchow}, we will give a proof of a generalization of this theorem to Banach spaces, but in finite dimensions, it takes the following form:

\begin{theorem}[Constant Rank Theorem in finite dimensions] \label{thm:finiteconstantranktheorem} 
\index{Constant Rank Theorem ! in finite dimensions}
Let $U \subset \mathbb R^m$ be an open set in $\mathbb R^m$. Let $f:U \rightarrow \mathbb R^n$ be of class $C^r$, $r \geq 1$, and assume that $f'(u)$ has constant rank $k$ in a neighbourhood of $u_0 \in \mathbb R^m$. Then there exist open sets $U_1 ,U_2 \subset \mathbb R^m$ and $V_1,V_2 \subset \mathbb R^n$ and $C^r$-diffeomorphisms $\varphi:V_1 \rightarrow V_2$ and $\psi:U_1 \rightarrow U_2$ such that $(\varphi \circ f \circ \psi)(x^1, \dots, x^m)=(x_1, \dots, x_k,0, \dots, 0)$.
\end{theorem}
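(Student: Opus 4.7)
The plan is to apply the Inverse Function Theorem twice: first to straighten $f$ in the source so that its first $k$ components become coordinate projections, and then to straighten $f$ in the target so that the remaining $n-k$ components vanish. After translating, I may assume $u_0 = 0$ and $f(0) = 0$. Since $f'(0)$ has rank $k$, permuting coordinates on source and target (a trivial linear diffeomorphism on each side) allows me to assume that the top-left $k \times k$ block of $f'(0)$ is invertible. I split coordinates as $u = (x, y) \in \mathbb R^k \times \mathbb R^{m-k}$ and write $f = (g, h)$ with $g : U \to \mathbb R^k$ and $h : U \to \mathbb R^{n-k}$; the assumption is then that $\partial g / \partial x(0)$ is invertible.

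The first straightening map is $\psi_0 : (x, y) \mapsto (g(x, y), y)$, whose differential at $0$ is block-triangular with invertible diagonal blocks and hence invertible. By the Inverse Function Theorem, $\psi_0$ restricts to a $C^r$-diffeomorphism near $0$; let $\psi$ denote its inverse. A short computation shows that $f \circ \psi$ has the form $(u, v) \mapsto (u, H(u, v))$ for some $C^r$-map $H$ into $\mathbb R^{n-k}$.

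Now the constant rank hypothesis enters. Since $\psi$ is a diffeomorphism, $f \circ \psi$ also has rank exactly $k$ throughout a neighbourhood of $0$; but its Jacobian has the block form
\[
\begin{pmatrix} I_k & 0 \\ \partial H / \partial u & \partial H / \partial v \end{pmatrix},
\]
whose rank is automatically at least $k$ and equals $k$ if and only if $\partial H / \partial v \equiv 0$. Consequently $H$ depends only on $u$ on some connected neighbourhood of $0$. The second straightening map is then $\varphi : (u, w) \mapsto (u, w - H(u))$, which is a $C^r$-diffeomorphism on a small neighbourhood of $0 \in \mathbb R^n$ (its inverse is $(u, w) \mapsto (u, w + H(u))$) and satisfies $\varphi(u, H(u)) = (u, 0)$. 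Composing yields $(\varphi \circ f \circ \psi)(u, v) = (u_1, \dots, u_k, 0, \dots, 0)$ as required.

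The main obstacle is the constant rank step: this is the only place where one needs the rank to be constant throughout a neighbourhood, and not merely at $u_0$. Without that hypothesis, one cannot conclude that $H$ loses its dependence on $v$, and the theorem genuinely fails (consider $f(x) = x^2$ at $x=0$). Everything else is just bookkeeping around two applications of the Inverse Function Theorem.
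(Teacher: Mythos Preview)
Your proof is correct and follows essentially the same strategy as the paper. Note that the paper does not give a separate proof of the finite-dimensional statement; it refers to textbooks and then proves the Banach space generalization (Theorem~\ref{thm:constantranktheorem}). Your argument is precisely the finite-dimensional specialization of that proof: the paper's map $g(x_1,x_2)=(f_1(x_1,x_2),x_2)$ is your $\psi_0$, the constant-rank step showing $\partial(f\circ\psi)/\partial x_2=0$ is your block-matrix argument forcing $\partial H/\partial v=0$, and the paper's second application of the Inverse Function Theorem to $\tilde g(y_1,y_2)=\tilde f(y_1)+(0,y_2)$ is replaced by your explicit affine diffeomorphism $\varphi(u,w)=(u,w-H(u))$, which is a harmless simplification.
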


\begin{definition}\index{immersion} \index{immersed submanifold}\index{submanifold ! immersed}
An immersion is a smooth map between manifolds such that its differential is injective at every point. If $i:N \rightarrow M$ is an injective immersion, then $(N,i)$ is called an immersed submanifold of $M$. 
\end{definition}

\begin{remark} \label{rem:chartsforimmersion}
An immersion has constant (full) rank at every point. So if we look at an immersion in a chart, it is a map as required in the assumptions of Theorem \ref{thm:finiteconstantranktheorem}. Therefore, in the right charts, an immersion looks like the map $(x_1, \dots, x_m) \mapsto (x_1, \dots, x_m, 0, \dots, 0)$.
\end{remark}

The following definitions and lemmas concerning initial submanifolds are taken from \citet{MichorNotes}. 

\begin{definition}
For an arbitrary subset $A$ of a manifold $M$ and $x_0 \in A$, let $C_{x_0}(A)$ denote the set of all $x \in A$ which can be connected to $x_0$ by a smooth curve in $M$ lying in $A$. 
\end{definition}

\begin{definition}\label{def:initialsubmanifold} \index{initial submanifold} \index{submanifold ! initial}
Let $N$ be a subset of a manifold $M$. A chart $(U,u)$ of $M$ centered at $x \in N$ is called an initial submanifold chart for $N$ if
$$u(C_x(U \cap N))=u(U) \cap \mathbb (R^n \times \{0\}) \text{ for some $n \in \mathbb N$.}$$

$N$ is called an initial submanifold of $M$ if for all $x \in N$, there is an initial submanifold chart centered at $x$. 
\end{definition}

\begin{definition}\label{def:initialmapping} \index{initial mapping}
We will call a mapping $i:N \rightarrow M$ initial if the following condition holds: For any manifold $Z$ a mapping $f:Z \rightarrow N$ is smooth if and only if $i \circ f : Z \rightarrow M$ is smooth. 
\end{definition}

\begin{remark}
Let $N$ be a submanifold of $M$. Then the injective immersion $i:N \rightarrow M$ is an initial mapping, and $N$ is an initial submanifold of $M$. 
\end{remark}

\begin{lemma}\label{lem:initialsubmanifold}
Let $i:N \rightarrow M$ be an injective immersion and an initial mapping. Then $i(N)$ is an initial submanifold of $M$. 
\end{lemma}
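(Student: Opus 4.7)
The plan is to build an initial submanifold chart at each $y_0 \in i(N)$ directly from an immersion chart of $i$, and to verify the chart property by lifting smooth curves from $M$ back to $N$ using the initial mapping hypothesis.

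Fix $y_0 \in i(N)$ and let $x_0 \in N$ be the unique point with $i(x_0)=y_0$. By Remark~\ref{rem:chartsforimmersion} applied to the immersion $i$, there exist charts $(V,v)$ of $N$ centered at $x_0$ and $(U,u)$ of $M$ centered at $y_0$ such that
\[
u\circ i \circ v^{-1}(x_1,\dots,x_n)=(x_1,\dots,x_n,0,\dots,0)
\]
on $v(V)$, where $n=\dim N$. By shrinking, I may assume $v(V)=(-\epsilon,\epsilon)^n$ and $u(U)=(-\epsilon,\epsilon)^{n+k}$, so that the slice through the origin satisfies $u^{-1}\bigl(u(U)\cap(\mathbb R^n\times\{0\})\bigr)=i(V)$. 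I claim $(U,u)$ is an initial submanifold chart for $i(N)$ at $y_0$, i.e.\ that $C_{y_0}(U\cap i(N))=i(V)$.

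The inclusion $i(V)\subseteq C_{y_0}(U\cap i(N))$ is straightforward: for any $x\in V$, the straight line $s\mapsto v^{-1}(s\,v(x))$ is a smooth curve in $V$ from $x_0$ to $x$, whose image under $i$ is a smooth curve in $M$ from $y_0$ to $i(x)$ lying entirely in $i(V)\subseteq U\cap i(N)$.

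For the reverse inclusion, let $y\in C_{y_0}(U\cap i(N))$ and pick a smooth $\gamma\colon[0,1]\to M$ with $\gamma(0)=y_0$, $\gamma(1)=y$, and $\gamma([0,1])\subseteq U\cap i(N)$. To sidestep the fact that $[0,1]$ has boundary, I reparametrize with a smooth function $s\colon\mathbb R\to[0,1]$ that is $0$ for $t\le 0$, $1$ for $t\ge 1$, and has all derivatives vanishing at both ends; then $\bar\gamma:=\gamma\circ s\colon\mathbb R\to M$ is smooth with $\bar\gamma(\mathbb R)=\gamma([0,1])\subseteq i(N)$. Injectivity of $i$ makes $\tilde\gamma:=i^{-1}\circ\bar\gamma\colon\mathbb R\to N$ pointwise well-defined, and since $i\circ\tilde\gamma=\bar\gamma$ is smooth, the initial mapping property (applied with $Z=\mathbb R$) shows that $\tilde\gamma$ itself is smooth, in particular continuous, with $\tilde\gamma(0)=x_0$ and $\tilde\gamma(1)=i^{-1}(y)$.

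Now I run a connectedness argument in the immersion chart. Set $B=\tilde\gamma^{-1}(V)\cap[0,1]$, an open subset of $[0,1]$ containing $0$, and let $[0,T)$ (or $[0,1]$) be its connected component containing $0$. For $t\in[0,T)$ one has $\bar\gamma(t)=i(\tilde\gamma(t))\in i(V)$, so $u(\bar\gamma(t))\in(-\epsilon,\epsilon)^n\times\{0\}$. By continuity of $\bar\gamma$ and the fact that $\bar\gamma(T)\in U=u^{-1}((-\epsilon,\epsilon)^{n+k})$, the last $k$ coordinates of $u(\bar\gamma(T))$ are also zero, so $\bar\gamma(T)\in u^{-1}((-\epsilon,\epsilon)^n\times\{0\})=i(V)$ and hence $\tilde\gamma(T)\in V$ by injectivity. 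This forces $T\in B$; since $B$ is open, the connected component must in fact be all of $[0,1]$, giving $\tilde\gamma(1)\in V$, i.e.\ $y\in i(V)$.

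The main obstacle is exactly what the reparametrization addresses: the naive lift $i^{-1}\circ\gamma$ of a curve on a closed interval need not obviously be smooth at the endpoints, so one cannot feed $[0,1]$ directly into the initial mapping property as stated. Flattening the curve near the endpoints produces a smooth curve on all of $\mathbb R$ whose image still lies in $i(N)$, and only then does the initial mapping hypothesis hand us a smooth lift to $N$; from there the immersion chart controls exactly where the lift can go.
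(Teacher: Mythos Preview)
Your proof is correct and follows essentially the same strategy as the paper's: choose immersion charts, show the easy inclusion by contracting along straight lines in the chart, and for the hard inclusion lift the connecting curve to $N$ via the initial mapping property and argue it cannot escape the chart domain $V$. The two arguments differ only in technical details: the paper uses a two-radius ``buffer zone'' (balls of radius $r$ inside balls of radius $2r$) and derives a contradiction if the lifted curve crosses the annulus, whereas you run a cleaner open--closed argument directly on $\tilde\gamma^{-1}(V)$; and you explicitly reparametrize to $\mathbb R$ to sidestep the manifold-with-boundary issue before invoking the initial property, a point the paper passes over silently.
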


\begin{proof}
Let $n$ denote the dimension of $N$ and $m$ the dimension of $M$. Let $x \in N$. As noticed in Remark \ref{rem:chartsforimmersion}, because $i$ is an immersion, we may choose a chart $(V,v)$ centered at $i(x)$ on $M$ and another chart $(W,w)$ centered at $x$ on $N$ such that $$(v \circ i \circ w^{-1})(y^1, \dots, y^n) = (y^1, \dots, y^n, 0, \dots, 0).$$ Let $r>0$ be so small that $$\{z \in \mathbb R^m : \abs{z} < 2r \} \subset v(V) \quad \text{and}\quad \{y \in \mathbb R^n : \abs{y}<2r \} \subset w(W).$$ 
Put 
$$V_1:=v^{-1}(\{z \in \mathbb R^m : \abs{z} < 2r \}) \subset N \quad \text{and} \quad W_1 := w^{-1}(\{y \in \mathbb R^n : \abs{y}<2r \} ).$$
We claim that $(V_1,v)$ is an initial submanifold chart.
\begin{equation*}\begin{split}
v^{-1}(v & (V_1) \cap (\mathbb R^n \times \{0\})) = v^{-1}(\{(y^1, \dots, y^n, 0, \dots, 0): \abs{y}<r \})= \\
& = i \circ w^{-1} \circ (v \circ i \circ w^{-1})^{-1} (\{(y^1, \dots, y^n, 0, \dots, 0): \abs{y}<r \})= \\
& = i \circ w^{-1} (\{ y \in \mathbb R^n: \abs{y}<r \})= i(W_1) \subset C_{i(x)}(V_1 \cap i(N)).
\end{split}\end{equation*}
The last inclusion holds since $i(W_1) \subset V_1 \cap i(N)$ and $i(W_1)$ is smoothly contractible. To complete the proof, it remains to show the other inclusion $C_{i(x)}(V_1 \cap i(N)) \subset i(W_1)$. Let $z \in C_{i(x)}(V_1 \cap i(N))$. Then by definition, there is a smooth curve $c:[0,1] \rightarrow M$ with $c(0)=i(x)$, $c(1)=z$ that lies entirely in $V_1 \cap i(N)$. Because $i$ is an initial mapping, the unique curve $\tilde{c}:[0,1] \rightarrow N$ with $i \circ \tilde{c} = c$ is smooth. We claim that $\tilde{c}$ lies entirely in $W_1$. If not, then there is some $t \in [0,1]$ such that $\tilde{c}(t) \in w^{-1}(\{ y \in \mathbb R^n : r \leq \abs{y} < 2r \})$ since $\tilde c$ is smooth and thus continuous. But then we have 
\begin{multline*}
(v \circ c)(t)=(v \circ i)(\tilde c(t)) \in (v \circ i \circ w^{-1})(\{ y \in \mathbb R^n : r \leq \abs{y} < 2r \}) = \\
= \{ (y,0) \in (\mathbb R^n \times \{0\}) : r \leq \abs{y} < 2r \} \subset \{ z \in \mathbb R^m : r \leq \abs{z} < 2r \}.
\end{multline*}
But $(v \circ c)(t) \in \{ z \in \mathbb R^m : r \leq \abs{z} < 2r \}$ means $c(t) \notin V_1$, a contradiction. So $\tilde c$ lies entirely in $W_1$, thus $z = c(1) = i(\tilde c(1)) \in i(W_1)$. 
\end{proof}

\begin{lemma}\label{lem:initialsubmanifolduniquestructure}
Let $N$ be an initial submanifold of $M$. Then there is a unique smooth manifold structure on $N$ such that the injection $i:N \rightarrow M$ is an injective immersion and an initial mapping. 
\end{lemma}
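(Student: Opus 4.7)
The plan is to construct an atlas on $N$ by pulling back the initial submanifold charts of $M$, verify these charts are compatible and yield a Hausdorff manifold with the desired properties, and then invoke the initial mapping property itself to force uniqueness. For each $x\in N$ I fix an initial submanifold chart $(U_x,u_x)$ of $M$ centered at $x$ with associated integer $n_x$ as in Definition \ref{def:initialsubmanifold}, and set
$$V_x := C_x(U_x\cap N),\qquad \varphi_x := \mathrm{pr}\circ u_x|_{V_x},$$
where $\mathrm{pr}$ denotes projection onto the first $n_x$ coordinates; by the defining property of an initial submanifold chart, $\varphi_x$ is a bijection onto an open subset of $\mathbb{R}^{n_x}$. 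For $z$ in the set-theoretic domain of $\varphi_y\circ\varphi_x^{-1}$ the point $u_x^{-1}(z,0)$ lies in $V_y$, so $u_y(u_x^{-1}(z,0))\in\mathbb{R}^{n_y}\times\{0\}$ and
$$(\varphi_y\circ\varphi_x^{-1})(z) = \mathrm{pr}\bigl((u_y\circ u_x^{-1})(z,0)\bigr),$$
which is smooth in $z$; a short path-concatenation argument in the $u_x$-chart shows that $\varphi_x(V_x\cap V_y)$ is in fact open in $\mathbb{R}^{n_x}$. Declaring the $V_x$ to form a basis therefore endows $N$ with a topology in which each $\varphi_x$ is a homeomorphism onto an open subset of $\mathbb{R}^{n_x}$; since this topology is at least as fine as the subspace topology from $M$, it is Hausdorff, and $\{(V_x,\varphi_x)\}$ is a smooth atlas.

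Expressed in the charts $(V_x,\varphi_x)$ and $(U_x,u_x)$ the inclusion $i$ is $y\mapsto(y,0)$, which is a smooth injective immersion. The harder step will be verifying that $i$ is initial, i.e.\ that smoothness of $i\circ f:Z\to M$ implies that of $f:Z\to N$. Fix $z_0\in Z$ and set $x:=f(z_0)$. I would pick a chart neighbourhood $W$ of $z_0$ diffeomorphic to an open Euclidean ball and small enough that $(i\circ f)(W)\subset U_x$. For each $z\in W$ the straight segment in the chart is a smooth curve from $z_0$ to $z$, and composing with $i\circ f$ gives a smooth curve from $x$ to $f(z)$ lying entirely in $U_x\cap N$. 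Hence $f(z)\in C_x(U_x\cap N)=V_x$, so $f(W)\subset V_x$, and $\varphi_x\circ f|_W=\mathrm{pr}\circ u_x\circ(i\circ f)|_W$ is smooth.

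For uniqueness, let $N_1$ and $N_2$ denote $N$ equipped with two such structures and let $\iota:N_1\to N_2$ be the set-theoretic identity. Then $i\circ\iota = i:N_1\to M$ is smooth because $i$ is an immersion in the $N_1$-structure, and the initial property of $i:N_2\to M$ forces $\iota$ to be smooth; the symmetric argument handles $\iota^{-1}$. The main obstacle is the initiality step, where one must exploit the smoothness, not merely the continuity, of $i\circ f$ to force $f$ into the possibly much smaller path component $V_x=C_x(U_x\cap N)$ rather than just into $U_x\cap N$; this is exactly what the smooth-curve argument on the Euclidean chart of $Z$ accomplishes.
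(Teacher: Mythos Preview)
Your proof is correct and follows essentially the same route as the paper's: both construct the atlas from the sets $C_x(U_x\cap N)$ with the restricted initial submanifold charts, argue Hausdorffness via the new topology being finer than the subspace topology, and verify initiality by mapping a ball chart of $Z$ through $i\circ f$ so that smooth contractibility forces $f(W)\subset C_x(U_x\cap N)$. You are more explicit than the paper in several places---you spell out the uniqueness argument that the paper dismisses as ``direct from the definition,'' and you flag the openness of $\varphi_x(V_x\cap V_y)$, which the paper silently absorbs into ``chart changings are restrictions of chart changings on $M$''---but the underlying strategy is identical.
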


\begin{proof}
Uniqueness follows directly from the definition of an initial mapping. To prove existence, we claim that the charts $(C_x(U_x \cap N),u_x)$, where $(U_x, u_x)$ is an initial submanifold chart for $N$, yield a smooth manifold structure for $N$ such that $i$ has the desired properties. 

The sets $C_x(U_x \cap N)$ might not be open in the subspace topology of $N$, so in general, the subspace topology of $N$ is not the same as the chart topology. But we know that the chart topology is finer than the subspace topology, and therefore it is Hausdorff. The chart changings are smooth since they are just restrictions of chart changings on $M$. Clearly, $i:N \rightarrow M$ is an injective immersion. 

We need to show that $i$ is an initial mapping. For $z \in Z$, we choose a chart $(U,u)$ centered at $f(z)$, such that $u(C_{f(z)}(U \cap N)) = u(U) \cap (\mathbb R^n \times \{0\})$. Then $f^{-1}(U)$ is open in $Z$ and contains a chart $(V,v)$ centered at $z$ with $v(V)$ a ball. Then $f(V)$ is smoothly contractible in $U \cap N$, so $f(V) \subset C_{f(z)}(U \cap N)$, and $(u|C_{f(z)}(U \cap N))\circ f \circ v^{-1}$ is smooth. 
\end{proof}

\begin{remark}
Let $N$ be an initial submanifold of $M$. Then the connected components of $N$ are separable because $M$ admits a riemannian metric which can be induced on $N$ (see \citet[Section~5.5]{Abraham}). However, $N$ may have uncountably many connected components.
\end{remark}

\section{Chow's Theorem}\label{sec:chow}

\begin{definition}\label{def:involutive} \index{involutive}
A set of local vectorfields $\mathcal V \subset \mathfrak X_{loc}(M)$ is called involutive if for any $X,Y \in \mathcal V$, also $[X,Y] \in \mathcal V$. We will call a distribution $H$ involutive if the set of local horizontal vectorfields $\mathfrak X_{loc,H}$ is involutive. 
\end{definition}

\begin{definition}\label{def:liehull}\label{def:bracketgenerating}\index{Lie hull}\index{bracket generating}\index{H\"ormanders condition}
For a set of local vectorfields $\mathcal V \subset \mathfrak X_{loc}(M)$, the Lie hull $\mathcal{L}(\mathcal V)$ of $\mathcal V$ is the smallest involutive set of local vectorfields which contains $\mathcal V$. $\mathcal V$ is called bracket generating if $\mathcal L(\mathcal V)$ spans $TM$. A smooth distribution $H$ is called bracket generating (satisfies H\"ormander's condition) if the set of locally defined horizontal vectorfields $\mathfrak X_{loc,H}(M)$ is bracket generating. 
\end{definition}

\begin{theorem}[Chow's Theorem]\label{thm:chow}\index{Chow's Theorem}
If $H$ is a bracket generating distribution on a connected manifold, then any two points in the manifold can be connected by a horizontal path. 
\end{theorem}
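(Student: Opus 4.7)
The strategy is to show that the accessibility relation---$x \sim y$ iff $y \in Acc(x)$---is an equivalence relation whose equivalence classes are open in $M$. Connectedness of $M$ then forces every class to coincide with $M$. Reflexivity is immediate from constant curves, transitivity from the concatenation built into Definition~\ref{def:horizontalcurve}, and symmetry from time-reversal: if $\gamma:[a,b]\to M$ is controlled by horizontal $X_1,\dots,X_k$ with $L^1$-controls $u_i$, then $t \mapsto \gamma(a+b-t)$ is controlled by the same $X_i$ with controls $-u_i(a+b-\cdot)$, which are still in $L^1$.

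The heart of the argument is the openness of $Acc(x)$. Fix $y_0 \in Acc(x)$ and set $n = \dim M$. By the bracket generating hypothesis applied at $y_0$, there exist iterated Lie brackets $Y_1,\dots,Y_n$ of local horizontal vector fields such that $Y_1(y_0),\dots,Y_n(y_0)$ form a basis of $T_{y_0}M$. I plan to construct a map $\Phi$ from a neighborhood of $0 \in \mathbb R^n$ into $M$ whose image is contained in $Acc(y_0) = Acc(x)$ and covers a neighborhood of $y_0$.

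The building block is the classical commutator identity asserting that for horizontal $X,Y$ the composition
$$Fl^Y_{-s}\circ Fl^X_{-s}\circ Fl^Y_{s}\circ Fl^X_{s}$$
agrees with $Fl^{[X,Y]}_{s^2}$ up to an error of order $s^3$. Setting $t = s^2$ realizes the bracket direction $[X,Y](y_0)$ by horizontal flows alone. Iterating this scheme for higher-order brackets---using time parameter $s = t^{1/k}$ for a bracket of order $k$---produces for each $Y_j$ a one-parameter family $\psi_j(t)$ of compositions of horizontal flows with $\psi_j(0)(y_0) = y_0$ and, in a chart, $\psi_j(t)(y_0) = y_0 + t Y_j(y_0) + o(t)$; mirror constructions with reversed sign conventions realize $-Y_j(y_0)$. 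Gluing these pieces yields a continuous map $\Phi(t_1,\dots,t_n)$ on a neighborhood of $0 \in \mathbb R^n$ whose image lies in $Acc(y_0)$ and whose one-sided partials at the origin span $T_{y_0}M$. A topological open-mapping argument---for instance Brouwer invariance of domain applied in a chart around $y_0$---then shows that the image of $\Phi$ contains a neighborhood of $y_0$, proving openness.

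The principal obstacle is the inductive bookkeeping required to realize an arbitrary iterated bracket, not merely a single commutator, as a composition of horizontal flows up to a controlled higher-order error. This Baker--Campbell--Hausdorff type analysis, together with the technical nuisance that the resulting $\Phi$ is only continuous rather than smooth, is precisely where the three proofs announced in the paper diverge: the Stefan--Sussmann approach hides the approximation in the closure of the pseudo-group generated by horizontal flows; the elementary approach handles it by explicit compositions that simultaneously reveal the shape of subriemannian balls; and the endpoint-map approach replaces the iterated bracket calculus by a transversality computation for the differential of an endpoint map on an infinite-dimensional space of controls.
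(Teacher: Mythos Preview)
Your outline is essentially the strategy of the paper's second proof (Section~\ref{sec:secondproofofchow}): realize iterated brackets by nested commutators of horizontal flows (this is Theorem~\ref{michorstheorem}), assemble these into a map from a neighbourhood of $0\in\mathbb R^n$ into $M$ whose partials at the origin are the bracket directions, and conclude that the image is a neighbourhood of $y_0$. The equivalence-relation framing is Lemma~\ref{lem:locallyhorizontallypathwise}.

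There is, however, a genuine gap at the last step. Brouwer invariance of domain says that a continuous \emph{injective} map from an open subset of $\mathbb R^n$ into $\mathbb R^n$ is open; you have not established injectivity for $\Phi$, and the gluing of $\psi_j(t)$ for $t\geq 0$ with a mirror construction for $t<0$ typically fails to be locally injective. A degree-theoretic substitute is conceivable, but that requires showing $\Phi$ restricted to a small sphere has nonzero degree around $y_0$, which in turn demands uniform control of the $o(t)$ remainders---exactly the bookkeeping you flag as the principal obstacle and then leave undone.

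The paper's second proof closes this gap by upgrading $\Phi$ from merely continuous to $C^1$. Lemmas~\ref{ersteswurzellemma} and~\ref{zweiteswurzellemma} show that after the substitution $s=t^{1/w_i}$ (using the commutator with swapped outer arguments on the negative side when $w_i$ is even), each one-parameter piece $t\mapsto\psi_i(t,x)$ is $C^1$ in $t$ with derivative $X_i(y_0)/w_i!$ at $t=0$; the composite then has invertible differential at $0$, and the ordinary inverse function theorem delivers openness directly. So the ``technical nuisance'' you identify is not bypassed topologically but removed analytically.
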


The converse of Chow's Theorem fails: It is possible that any two points can be connected by a horizontal path, but that still the distribution is not bracket generating. In Section \ref{sec:firstproofofchow} we will see that this is the case exactly if $\mathcal S(\mathfrak X_{loc,H}(M))$ spans $TM$, but $\mathcal L(\mathfrak X_{loc,H}(M))$ does not. 

\begin{example}
The following very easy example for this situation can be found in the paper of \citet{Nagel} and in the book of \citet[page~65]{Jurdjevic}. For an other, more elaborate example see \citet[page~24]{Montgomery}. Let $H$ be the distribution on $\mathbb R^2$ that is spanned by the two vectorfields 
$$X_1 = \frac{\partial}{\partial x}, \quad X_2=\phi \frac{\partial}{\partial y},$$ 
where $\phi$ is a $C^\infty$-function on $\mathbb R^2$ that vanishes on the left-hand plane $x \leq 0$ and is positive otherwise. Then any two points in the plane can be connected by a horizontal curve. But $H$ is not bracket generating because for $x < 0$, $H_x$ is one-dimensional and therefore all the Lie brackets vanish. 
\end{example}

Figure \ref{fig:irregularballs} shows that in the above example, one encounters irregularities in the shape of the subriemannian ball $B(\epsilon,x)$ as $\epsilon$ increases. The Ball-Box Theorem \ref{ballboxtheorem} and also Theorem \ref{thm:endpointmapisopen} show that these irregularities are ruled out in the case of a bracket generating distribution. So non-bracket generating distributions are not desirable from a geometric point of view.   

\begin{empfile}[graphics/irregularballs]
\begin{empdef}[irregularballs](3cm,3cm)
u=h/2;
picture temp;

pickup pencircle scaled 1bp;
draw (-u,0)--(-u/3,0);
pickup defaultpen;
draw (0,-u)--(0,u);
dotlabel.bot(btex $x$ etex , (-2/3u,0));
temp:=currentpicture shifted (-3u,0);
currentpicture:=nullpicture;

pickup pencircle scaled 1bp;
draw (-4/3u,0)--(0,0);
pickup defaultpen;
draw (0,-u)--(0,u);
dotlabel.bot(btex $x$ etex , (-2/3u,0));
addto temp also currentpicture;
currentpicture:=nullpicture;

pickup pencircle scaled 1bp;
path p; 
p=(halfcircle scaled 2/3u rotated -90)--(-1/3u,0)--cycle; 
fill p withcolor 0.7white;
draw p;
draw (-4/3u,0)--(-1/3u,0);
pickup defaultpen;
draw (0,-u)--(0,-1/3u);
draw (0,u)--(0,1/3u);
dotlabel.bot(btex $x$ etex , (-2/3u,0));
addto temp also currentpicture shifted (3u,0);
currentpicture:=temp;
\end{empdef}
\end{empfile}
\immediate\write18{cd graphics ; mpost -tex=latex irregularballs ; cd .. }

\begin{figure}[htb]
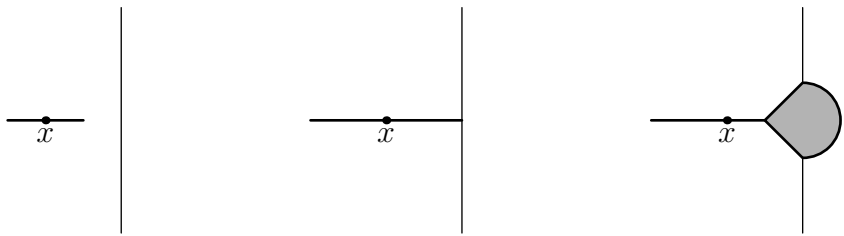
 
\begin{center}
\empuse{irregularballs}
\caption{$B(\epsilon,x)$ for $x=(-1,0) \in \mathbb R^2$ and $\epsilon=\frac{1}{2}, \epsilon=1, \epsilon=\frac{3}{2}$.}\label{fig:irregularballs}
\end{center}
\end{figure}
 
We will give three different proofs of Chow's Theorem in Sections \ref{sec:firstproofofchow}, \ref{sec:secondproofofchow} and \ref{sec:thirdproofofchow}, respectively.

%
%

\subsection{First proof of Chow's theorem} \label{sec:firstproofofchow}

This proof works under the most general assumptions compared to the proofs given in Section \ref{sec:secondproofofchow} and \ref{sec:thirdproofofchow}. It is based on the theory of \citet{Stefan} and \citet{Sussmann}. However, the notation and the proofs are taken from the lecture notes of \citet{MichorNotes}. The outline of the proof is the following: \index{Stefan-Sussman Theory}The Stefan-Sussmann-theory asserts that the accessible set is a maximal integral manifold of a certain integrable distribution containing $H$. This will be shown in Theorem \ref{thm:accessiblesetisintegralmanifold}. The bracket generating condition implies that this distribution is equal to $TM$. Because $M$ is connected, it is the maximal integral manifold of this distribution. Using Theorem \ref{thm:accessiblesetisintegralmanifold}, the proof of Chow's theorem is only a few lines long; it can be found at the very end of this section. 

\subsubsection{Integral manifolds} \label{sec:integralmanifolds}

\begin{definition}\label{integralmanifold} \index{integral manifold} \index{maximal integral manifold} \index{distribution ! integrable} \index{integrable distribution}
An integral manifold of a distribution $H$ is a connected immersed submanifold $(N, i)$ such that $T_x i(T_x N)=H_{i(x)}$ for all $x \in N$. An integral manifold of $H$ is called maximal, if it is not contained in any strictly larger integral manifold of $H$. A distribution $H$ is called integrable if for any $x \in M$, there is an integral manifold of $H$ containing $x$. 
\end{definition}

\begin{remark}\label{rem:distributionwithfullrank}
Let $H$ be a distribution on a connected manifold $M$. Then $M$ is a maximal integral manifold of $H$ if and only if $H=TM$.
\end{remark}

\begin{lemma}\label{lem:integralmanifold}
Let $H$ be a smooth distribution on $M$. Then we have:
\begin{enumerate}[(1)]
\item \label{lem:integralmanifold1}
If $(N,i)$ is an integral manifold of $H$ and $X \in \mathfrak X_{loc,H}(M)$, then $i^*X$ makes sense and is an element of $\mathfrak X_{loc}(N)$, which is $i | i^{-1}(U_X)$-related to $X$, where $U_X \subset M$ is the open domain of $X$.

\item \label{lem:integralmanifold2}
If $(N_1,i_1)$ and $(N_2,i_2)$ are integral manifolds of $H$, then 
$i_1^{-1}(i_1(N_1) \cap i_2(N_2))$ and $i_2^{-1}(i_1(N_1) \cap i_2(N_2))$
are open subsets in $N_1$ and $N_2$, respectively; furthermore $i_2^{-1} \circ i_1$ is a diffeomorphism between them.

\item \label{lem:integralmanifold3}
If $x \in M$ is contained in some integral manifold of $H$, then it is contained in a unique maximal one. 
\end{enumerate}
\end{lemma}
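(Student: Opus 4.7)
The plan is to prove the three parts in order, since (1) supplies the fundamental construction used in (2), and (2) provides the compatibility needed in (3). For part (1), the integral-manifold condition $T_y i(T_y N) = H_{i(y)}$ and the injectivity of $T_y i$ (immersion) together say that $T_y i$ is a linear isomorphism from $T_y N$ onto $H_{i(y)}$. For each $y \in i^{-1}(U_X)$, the horizontal vector $X(i(y))$ therefore has a unique preimage in $T_y N$, which I define to be $(i^*X)(y)$. This formula builds the $i$-relation into its very construction, and smoothness of $i^*X$ can be verified in the adapted charts of Remark~\ref{rem:chartsforimmersion}, where $i$ takes the normal form of an inclusion $\mathbb R^n \hookrightarrow \mathbb R^{n+k}$ and the pullback simply drops the last $k$ coordinates.

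The heart of the lemma is (2), and the key tool is flows. Fix $p = i_1(y_1) = i_2(y_2) \in i_1(N_1) \cap i_2(N_2)$. The integral-manifold condition forces $\dim T_{y_1}N_1 = \dim H_p = \dim T_{y_2}N_2$, so I can choose horizontal vector fields $X_1, \dots, X_k \in \mathfrak X_{loc,H}(M)$ near $p$ whose values at $p$ form a basis of $H_p$. By (1), the pullbacks $i_j^*X_l$ are smooth local vector fields on $N_j$ whose values at $y_j$ form a basis of $T_{y_j}N_j$. I then consider the multi-flow maps
\begin{equation*}
\psi_j(t_1,\dots,t_k) = Fl^{i_j^*X_1}_{t_1} \circ \cdots \circ Fl^{i_j^*X_k}_{t_k}(y_j), \qquad j=1,2.
\end{equation*}
The derivative of $\psi_j$ at the origin sends the $l$-th basis vector to $(i_j^*X_l)(y_j)$, so by the inverse function theorem each $\psi_j$ is a local diffeomorphism from a neighbourhood of $0$ onto a neighbourhood of $y_j$. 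The $f$-relation from (1), fed into Lemma~\ref{lem:frelated}, yields $i_j \circ Fl^{i_j^*X_l}_t = Fl^{X_l}_t \circ i_j$ and hence $i_1 \circ \psi_1 = i_2 \circ \psi_2$ wherever both sides are defined. This single identity does the whole job: it places a neighbourhood of $y_2$ inside $i_2^{-1}(i_1(N_1)\cap i_2(N_2))$, proving openness, and it locally realises $i_2^{-1}\circ i_1$ as $\psi_2 \circ \psi_1^{-1}$, a composition of diffeomorphisms. The main obstacle is precisely ensuring that the independently constructed flows on $N_1$ and on $N_2$ actually track the same curves in $M$; this is exactly what $f$-relatedness delivers.

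For (3), I would take $\tilde N$ to be the union of $i(N)$ over all integral manifolds $(N,i)$ containing $x$ (each of which is connected by definition), and transport the smooth structure from each $(N,i)$ to the corresponding subset of $\tilde N$. Part (2) guarantees that these charts are pairwise compatible, so $\tilde N$ carries a well-defined smooth manifold structure, the inclusion $\tilde N \hookrightarrow M$ is an immersion, and $T\tilde N$ equals $H$ along $\tilde N$. By construction every integral manifold through $x$ embeds into $\tilde N$, which simultaneously proves maximality and, applied to any other maximal candidate through $x$, proves uniqueness.
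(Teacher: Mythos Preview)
Your proof follows essentially the same route as the paper's: part (1) via the pointwise inverse of $T_yi$, part (2) via the multi-flow maps $\psi_j$ (the paper calls them $f_j$) and the identity $i_2^{-1}\circ i_1\circ \psi_1=\psi_2$ obtained from $f$-relatedness and Lemma~\ref{lem:frelated}, and part (3) by taking the union of all integral manifolds through $x$ with charts glued by (2). The one detail the paper adds that you omit is in (3): since manifolds here are assumed paracompact, one must check that the union $\tilde N$ is second countable, which the paper handles by inducing a Riemannian metric from $M$.
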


\begin{proof}\indent\par
\begin{itemize}
\item[(\ref{lem:integralmanifold1})] If $i(x) \in U_X$ for $x \in N$, we have $X(i(x)) \in H_{i(x)}=T_x i(T_x N)$, so $i^*X(x) :=((T_x i)^{-1} \circ X \circ i)(x)$ makes sense. It is defined on an open subset of $N$ and is smooth in $x$.

\item[(\ref{lem:integralmanifold2})] Let $X \in \mathfrak X_{loc, H}(M)$. Then $i^*_j X \in \mathfrak X_{loc}(N_j)$ and is $i_j$-related to $X$ for $j=1,2$. By Lemma \ref{lem:frelated} we have $$i_j \circ Fl_t^{i^*_j X}=Fl_t^X \circ i_j.$$ Now choose $x_j \in N_j$ such that $i_1(x_1)=i_2(x_2)=x \in M$ and choose vectorfields $X_1, \dots, X_k \in \mathfrak X_{loc,H}$ such that $\left\{ X_1(x), \dots, X_k(x) \right\}$ form a basis of $H_x$. Then $$f_j(t_1, \dots, t_k) :=(Fl_{t_1}^{i^*_j X_1} \circ \dots \circ Fl_{t_k}^{i^*_j X_k})(x_j)$$ is a smooth mapping defined near zero $\mathbb{R}^k \rightarrow N_j$. Since $\evaluate{\frac{\partial}{\partial t_i}}{0} f_j = i^*_j X_i$ for $j=1,2$, we see that $f_j$ is a diffeomorphism near zero. Finally we have
\begin{equation*}\begin{split}
(i_2^{-1} \circ i_1 \circ f_1)(t_1, \dots, t_k) 
& = (i_2^{-1} \circ i_1 \circ Fl_{t_1}^{i^*_1 X_1} \circ \dots \circ Fl_{t_k}^{i^*_1 X_k})(x_1)\\
& = (i_2^{-1} \circ Fl_{t_1}^{X_1} \circ \dots \circ Fl_{t_k}^{X_k} \circ i_1)(x_1) \\
& = (Fl_{t_1}^{i^*_2 X_1} \circ \dots \circ Fl_{t_k}^{i^*_2 X_k} \circ i_2^{-1} \circ i_1)(x_1)\\
& = f_2(t_1, \dots, t_k).
\end{split}\end{equation*}
So $i_2^{-1} \circ i_1$ is a diffeomorphism, as required. 

\item[(\ref{lem:integralmanifold3})] Let $N$ be the union of all integral manifolds containing $x$. Choose the union of all the atlases of these integral manifolds as atlas for $N$, which is a smooth atlas by (2). Note $M$ admits a riemannian metric which can be induced on $N$, so $N$ is separable (see \citet[section~5.5]{Abraham}). \qedhere
\end{itemize}
\end{proof}

\begin{remark}\index{leaf of a foliation} \index{foliation}
If $H$ is an integrable distribution on a manifold $M$, then by Lemma \ref{lem:integralmanifold} each point is contained in a unique maximal integral manifold. These maximal integral manifolds form a partition of $M$. This partition is called the foliation of $M$ induced by the integrable distribution $H$, and each maximal integral manifold is called a leaf of the foliation. 
\end{remark}

\begin{definition} \index{stable}
A set of local vectorfields $\mathcal V \subset \mathfrak X_{loc}(M)$ is called stable if for all $X, Y \in \mathcal V$ and for all $t$ for which it is defined, the local vectorfield $(Fl_t^X)^* Y$ is again an element of $\mathcal V$. Let $\mathcal S(\mathcal V)$ denote the minimal stable set of local vectorfields containing $\mathcal V$. 
\end{definition}

\begin{lemma}\label{lem:elementsofsv}
Let $\mathcal V \subset \mathfrak X_{loc}(M)$ be a set of local vectorfields. Then $\mathcal S(\mathcal V)$ consists of all local vectorfields of the form $(Fl_{t_1}^{X_1} \circ \dots \circ Fl_{t_k}^{X_k})^* Y$ for $X_i, Y \in \mathcal V$. 
\end{lemma}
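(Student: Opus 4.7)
The plan is to establish a double inclusion. Let $\mathcal W$ denote the collection of all local vectorfields of the form
\begin{equation*}
(Fl^{X_1}_{t_1} \circ \dots \circ Fl^{X_k}_{t_k})^* Y, \qquad k \geq 0, \quad X_1, \dots, X_k, Y \in \mathcal V,
\end{equation*}
where the $k = 0$ case is understood to mean $Y$ itself, so in particular $\mathcal V \subset \mathcal W$. The claim is that $\mathcal W = \mathcal S(\mathcal V)$.

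First I would show $\mathcal W \subset \mathcal S(\mathcal V)$ by induction on $k$. The base case $k = 0$ is just $\mathcal V \subset \mathcal S(\mathcal V)$. For the inductive step, write
\begin{equation*}
(Fl^{X_1}_{t_1} \circ \dots \circ Fl^{X_k}_{t_k})^* Y = (Fl^{X_1}_{t_1})^* \bigl( (Fl^{X_2}_{t_2} \circ \dots \circ Fl^{X_k}_{t_k})^* Y \bigr);
\end{equation*}
by hypothesis the inner pullback lies in $\mathcal S(\mathcal V)$, and since $X_1 \in \mathcal V \subset \mathcal S(\mathcal V)$, the stability of $\mathcal S(\mathcal V)$ places the outer pullback in $\mathcal S(\mathcal V)$ as well.

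For the reverse inclusion, by minimality of $\mathcal S(\mathcal V)$ it suffices to verify that $\mathcal W$ is itself stable. Given $U, V \in \mathcal W$ with representations
\begin{equation*}
V = (Fl^{Z_1}_{s_1} \circ \dots \circ Fl^{Z_l}_{s_l})^* W, \qquad U = (Fl^{X_1}_{t_1} \circ \dots \circ Fl^{X_k}_{t_k})^* Y,
\end{equation*}
set $\Phi := Fl^{Z_1}_{s_1} \circ \dots \circ Fl^{Z_l}_{s_l}$ and $\Psi := Fl^{X_1}_{t_1} \circ \dots \circ Fl^{X_k}_{t_k}$. Lemma~\ref{lem:frelated} gives $Fl^V_t = \Phi^{-1} \circ Fl^W_t \circ \Phi$, while the contravariance rule $(f \circ g)^* = g^* \circ f^*$ yields
\begin{equation*}
(Fl^V_t)^* U = (\Psi \circ Fl^V_t)^* Y = (\Psi \circ \Phi^{-1} \circ Fl^W_t \circ \Phi)^* Y.
\end{equation*}
Since $\Phi^{-1} = Fl^{Z_l}_{-s_l} \circ \dots \circ Fl^{Z_1}_{-s_1}$ is again a composition of flows of elements of $\mathcal V$, the right hand side is the pullback of $Y \in \mathcal V$ along such a composition, so $(Fl^V_t)^* U \in \mathcal W$.

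The main obstacle I anticipate is purely bookkeeping: getting the contravariant composition rule for pullbacks and the inversion rule for a composition of flows lined up correctly so that the resulting expression is genuinely of the prescribed form (a pullback of a single element of $\mathcal V$ along a composition of flows of elements of $\mathcal V$). Once these conventions are carefully tracked, both inclusions are immediate from Lemma~\ref{lem:frelated} and the minimality in the definition of $\mathcal S(\mathcal V)$.
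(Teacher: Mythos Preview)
Your proof is correct and follows essentially the same approach as the paper. The paper's proof is more terse: it only writes down the key identity from Lemma~\ref{lem:frelated}, namely that the flow of an element of $\mathcal W$ is again a composition of flows of elements of $\mathcal V$, and leaves both the easy inclusion $\mathcal W \subset \mathcal S(\mathcal V)$ and the final bookkeeping implicit; you have simply spelled these steps out in full.
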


\begin{proof}
We have to show that the set of vectorfields of the above form is stable. By Lemma \ref{lem:frelated}, the flow of such a vectorfield is 
\begin{equation*}
Fl^{(Fl_{t_1}^{X_1} \circ \dots \circ Fl_{t_k}^{X_k})^* Y}_t=Fl_{-t_k}^{X_k} \circ \dots Fl_{-t_1}^{X_1} \circ Fl_t^Y \circ Fl_{t_1}^{X_1} \circ \dots \circ Fl_{t_k}^{X_k}. \qedhere
\end{equation*}
\end{proof}

\begin{definition}\index{infinitesimal automorphism}
A local vectorfield $X \in \mathfrak X_{loc}(M)$ is called an infinitesimal automorphism of a distribution $H$ if $T_x(Fl_t^X)(H_x) \subset H_{Fl^X(t,x)}$ whenever defined. Let $\aut(H)$ denote the set of all infinitesimal automorphisms of $H$. 
\end{definition}

\begin{lemma}
$\aut(H)$ is stable. 
\end{lemma}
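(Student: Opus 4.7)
The plan is to show: if $X,Y\in\aut(H)$, then $(Fl_t^X)^*Y\in\aut(H)$, i.e.\ the flow of the pullback vectorfield maps horizontal vectors to horizontal vectors at every time for which it is defined.

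First I would reduce the problem to one about a composition of flows. By the last identity in Lemma \ref{lem:frelated} (applied with $f = Fl_t^X$, which is a local diffeomorphism), the flow of $(Fl_t^X)^*Y$ is
\begin{equation*}
Fl_s^{(Fl_t^X)^*Y} \;=\; Fl_{-t}^X \circ Fl_s^Y \circ Fl_t^X,
\end{equation*}
so I need to verify that the tangent map of this composition sends $H_x$ into $H_{z}$, where $z$ is the image of $x$ under this composition.

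Then the proof is essentially a three-step chase using the chain rule. Set $x_1 = Fl_t^X(x)$, $x_2 = Fl_s^Y(x_1)$, $x_3 = Fl_{-t}^X(x_2)$. Applying $X\in\aut(H)$ at the point $x$ gives $T_x Fl_t^X(H_x)\subset H_{x_1}$; applying $Y\in\aut(H)$ at $x_1$ gives $T_{x_1}Fl_s^Y(H_{x_1})\subset H_{x_2}$; applying $X\in\aut(H)$ once more with time $-t$ at $x_2$ gives $T_{x_2}Fl_{-t}^X(H_{x_2})\subset H_{x_3}$. Composing these three inclusions via the chain rule yields
\begin{equation*}
T_x\bigl(Fl_{-t}^X\circ Fl_s^Y\circ Fl_t^X\bigr)(H_x)\subset H_{x_3},
\end{equation*}
which is exactly the defining property of $\aut(H)$ for the vectorfield $(Fl_t^X)^*Y$.

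The only subtlety to flag is that I am using the automorphism property of $X$ at both positive and negative times; this is legitimate because the definition of $\aut(H)$ quantifies over all $t$ for which $Fl_t^X$ is defined, in particular over negative times. With that observation the argument is purely mechanical, so I expect no real obstacle; the content lies entirely in packaging Lemma \ref{lem:frelated} together with the chain rule.
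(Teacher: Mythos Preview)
Your proposal is correct and is essentially the same argument as the paper's, just spelled out in more detail: the paper also rewrites $Fl_s^{(Fl_{t}^X)^*Y}$ as $Fl_{-t}^X\circ Fl_s^Y\circ Fl_t^X$ via Lemma~\ref{lem:frelated} and then uses the automorphism property of $X$ and $Y$ along the composition to obtain $T_x(Fl_s^Z)(H_x)\subset H_{Fl^Z(s,x)}$.
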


\begin{proof}
Let $X,Y \in \aut(H)$, and let $Z:=(Fl_{t_1}^X)^* Y$. Then 
\begin{equation*}
T_x(Fl_t^Z)(H_x) = T_x(Fl_{-t_1}^X \circ Fl_t^Y \circ Fl_{t_1}^X)(H_x) \subset H_{Fl^Z(t,x)}. \qedhere
\end{equation*}
\end{proof}

\begin{theorem}[Stefan-Sussmann] \label{thm:sussmann}
Let $H$ be a smooth distribution on a manifold $M$. Then the following conditions are equivalent:
\begin{enumerate}[(1)]
\item \label{thm:sussmann1} $H$ is integrable. 
\item \label{thm:sussmann2} $\mathfrak X_{loc,H}(M)$ is stable. 
\item \label{thm:sussmann3} There exists a subset $\mathcal V \subset \mathfrak X_{loc}(M)$ such that $\mathcal S(\mathcal V)$ spans $H$.
\item \label{thm:sussmann4} $\aut(H) \cap \mathfrak X_{loc,H}(M)$ spans $H$.
\end{enumerate}
\end{theorem}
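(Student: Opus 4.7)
The plan is to prove the four conditions equivalent by establishing the cycle $(\ref{thm:sussmann1}) \Rightarrow (\ref{thm:sussmann2}) \Rightarrow (\ref{thm:sussmann3}) \Rightarrow (\ref{thm:sussmann4}) \Rightarrow (\ref{thm:sussmann1})$. For $(\ref{thm:sussmann1}) \Rightarrow (\ref{thm:sussmann2})$, given horizontal local vectorfields $X,Y$ and a point $x$ in the domain of $(Fl^X_t)^*Y$, I would pass to the maximal integral manifold $(N,i)$ through $x$ provided by Lemma~\ref{lem:integralmanifold}(\ref{lem:integralmanifold3}). Since $X$ is horizontal, $i^*X$ is defined on $N$ and is $i$-related to $X$, so Lemma~\ref{lem:frelated} gives $Fl^X_t \circ i = i \circ Fl^{i^*X}_t$. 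This shows that $Fl^X_t$ locally preserves the leaf $i(N)$, whence $(T_x Fl^X_t)^{-1}$ sends the horizontal vector $Y(Fl^X_t(x))$ back into $H_x$. The implication $(\ref{thm:sussmann2}) \Rightarrow (\ref{thm:sussmann3})$ is immediate by taking $\mathcal V := \mathfrak X_{loc,H}(M)$, which satisfies $\mathcal S(\mathcal V) = \mathcal V$ by stability and spans $H$ by smoothness.

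For $(\ref{thm:sussmann3}) \Rightarrow (\ref{thm:sussmann4})$ I would prove the stronger assertion that $\mathcal S(\mathcal V) \subset \aut(H) \cap \mathfrak X_{loc,H}(M)$, which then yields $(\ref{thm:sussmann4})$ because $\mathcal S(\mathcal V)$ spans $H$ by hypothesis. The values of elements of $\mathcal S(\mathcal V)$ lie in $H$ by the spanning hypothesis. For the infinitesimal-automorphism property of $X \in \mathcal S(\mathcal V)$, write an arbitrary $v \in H_x$ as $v = \sum a_i Y_i(x)$ with $Y_i \in \mathcal S(\mathcal V)$ and use the identity
$$T_x Fl^X_t \cdot Y_i(x) = \bigl((Fl^X_{-t})^* Y_i\bigr)(Fl^X_t(x));$$
stability places $(Fl^X_{-t})^* Y_i$ back in $\mathcal S(\mathcal V)$, so its value at $Fl^X_t(x)$ lies in $H_{Fl^X_t(x)}$, and hence so does $T_x Fl^X_t(v)$.

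The main step is $(\ref{thm:sussmann4}) \Rightarrow (\ref{thm:sussmann1})$. Fix $x_0 \in M$, set $k = \dim H_{x_0}$, and choose $X_1,\dots,X_k \in \aut(H) \cap \mathfrak X_{loc,H}(M)$ such that $X_1(x_0),\dots,X_k(x_0)$ form a basis of $H_{x_0}$. On a neighbourhood of the origin in $\mathbb R^k$ define
$$f(t_1,\dots,t_k) := (Fl^{X_1}_{t_1} \circ \dots \circ Fl^{X_k}_{t_k})(x_0) =: \phi_t(x_0).$$
Since $\partial_i f(0) = X_i(x_0)$, the map $T_0 f$ is injective, so $f$ is an immersion near $0$. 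Applying the chain rule to the composition shows that $\partial_i f(t)$ equals $T(Fl^{X_1}_{t_1} \circ \dots \circ Fl^{X_{i-1}}_{t_{i-1}})$ applied to the horizontal vector $X_i(\,\cdot\,)$, and the $\aut(H)$ property of each $X_j$ then gives $\partial_i f(t) \in H_{f(t)}$; hence $T_t f(\mathbb R^k) \subset H_{f(t)}$. For the reverse inclusion, observe that both $T\phi_t$ and $T\phi_t^{-1}$ are compositions of tangent maps preserving $H$, so $T_{x_0}\phi_t$ restricts to a linear isomorphism $H_{x_0} \to H_{f(t)}$. This forces $\dim H_{f(t)} = k$, and the two $k$-dimensional subspaces must coincide. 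Thus the image of $f$ is a local integral manifold of $H$ through $x_0$. The crux is precisely this dimensional control: smoothness of $H$ only yields lower semicontinuity of the rank, and it is the $\aut(H)$-hypothesis that pins the rank at exactly $k$ along $f(\mathbb R^k)$ and so secures the equality $T_t f(\mathbb R^k) = H_{f(t)}$.
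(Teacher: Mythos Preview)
Your argument is correct and the core construction---building an integral manifold as the image of $f(t_1,\dots,t_k) = (Fl^{X_1}_{t_1}\circ\dots\circ Fl^{X_k}_{t_k})(x_0)$ and using flow-invariance of $H$ both to show $T_tf(\mathbb R^k)\subset H_{f(t)}$ and to pin down $\dim H_{f(t)}=k$---is exactly what the paper does. The only difference is the order of the cycle: the paper proves $(\ref{thm:sussmann1})\Rightarrow(\ref{thm:sussmann2})\Rightarrow(\ref{thm:sussmann4})\Rightarrow(\ref{thm:sussmann3})\Rightarrow(\ref{thm:sussmann1})$, so its hard step runs from $(\ref{thm:sussmann3})$ and draws the $X_i$ from $\mathcal S(\mathcal V)$ rather than from $\aut(H)\cap\mathfrak X_{loc,H}(M)$; your $(\ref{thm:sussmann3})\Rightarrow(\ref{thm:sussmann4})$ is the paper's $(\ref{thm:sussmann4})\Rightarrow(\ref{thm:sussmann3})$ read in reverse, and both amount to the observation that a stable family spanning $H$ consists of horizontal infinitesimal automorphisms. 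Neither ordering buys anything the other doesn't.
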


\begin{proof}\mbox{}\par
\begin{description}
\item[](\ref{thm:sussmann1}) $\Rightarrow$ (\ref{thm:sussmann2}) Let $X \in \mathfrak X_{loc,H}(M)$ and let $L$ be the leaf through $x \in M$, with $i:L \rightarrow M$ the inclusion. Then $Fl_{-t}^X \circ i = i \circ Fl_{-t}^{i^* X}$ by Lemma \ref{lem:frelated}, so we have
\begin{equation*}\begin{split}
T_x(Fl_{-t}^X)(H_x) & = T(Fl_{-t}^X) . T_x i . T_x L = T(Fl_{-t}^X \circ i) . T_x L \\
& = T i . T_x(Fl_{-t}^{i^* X}) . T_x L \\
& = T i . T_{Fl^{i^* X}(-t,x)}L = H_{Fl^X(-t,x)}.
\end{split}\end{equation*}
This implies that $(Fl_t^X)^* Y \in \mathfrak X_{loc,H}(M)$ for any $Y \in \mathfrak X_{loc,H}(M)$.

\item[](\ref{thm:sussmann2}) $\Rightarrow$ (\ref{thm:sussmann4}) In fact, (\ref{thm:sussmann2}) says that $\mathfrak X_{loc,H} \subset \aut(H)$.

\item[](\ref{thm:sussmann4}) $\Rightarrow$ (\ref{thm:sussmann3}) We can choose $\mathcal V = \aut(H) \cap \mathfrak X_{loc,H}(M)$: for $X,Y \in \mathcal V$, we have $(Fl_t^X)^* Y \in \mathfrak X_{loc,H}(M)$; so $\mathcal V \subset \mathcal S(\mathcal V) \subset \mathfrak X_{loc,H}(M)$ and $H$ is spanned by $\mathcal V$.

\item[](\ref{thm:sussmann3}) $\Rightarrow$ (\ref{thm:sussmann1}) We have to show that each point $x \in M$ is contained in some integral submanifold for the distribution $H$. Let $\dim H_x = n$. There are $X_1, \dots, X_n \in \mathcal S(\mathcal V)$ such that $X_1(x), \dots, X_n(x)$ span $H_x$. As in the proof of Lemma \ref{lem:integralmanifold} (2), we consider the mapping 
$$f(t_1, \dots, t_n) := (Fl_{t_1}^{X_1} \circ \dots \circ Fl_{t_n}^{X_n})(x)$$ 
defined near zero in $\mathbb R^n$. Since the rank of $f$ at zero is $n$, the image under $f$ of a small open neighbourhood of zero is a submanifold $N$ of $M$. We claim that $N$ is an integral manifold of $H$. The tangent space $T_{f(t_1, \dots, t_n)}N$ is linearly generated by the the tangent vectors 
\begin{equation*}\begin{split}
&\frac{\partial}{\partial t_k}  (Fl_{t_1}^{X_1} \circ \dots \circ Fl_{t_n}^{X_n})(x)  =\\ 
& \quad = T(Fl_{t_1}^{X_1} \circ \dots \circ Fl_{t_{k-1}}^{X_{k-1}}) X_k((Fl_{t_{k+1}}^{X_{k+1}} \circ \dots \circ Fl_{t_n}^{X_n})(x)) \\
& \quad = ((Fl_{-t_1}^{X_1})^* \dots (Fl_{-t_{k-1}}^{X_{k-1}})^* X_k)(f(t_1, \dots, t_n)).
\end{split}\end{equation*}
Since $\mathcal S(\mathcal V)$ is stable, these vectors lie in $H_{f(t_1, \dots, t_n)}$. They even span $H_{f(t_1, \dots, t_n)}$ because $\dim H_{f(t)} = \dim H_x = n$, as we will prove now. Indeed, since $\mathcal S(\mathcal V)$ spans $H$ and is stable, $T_x(Fl_t^X)$ is a vector space isomorphism from $H_x$ onto $H_{Fl^X(t,x)}$ for all $X \in \mathcal S(\mathcal V)$, and the conclusion follows from the form of $f$. \qedhere
\end{description}
\end{proof}

\begin{theorem}\label{thm:globalsussmann}
Let $H$ be an integrable distribution of a manifold $M$. Then for each $x \in M$ there exists a chart $(U,u)$ around $x$ with $u(U)=\{y \in \mathbb R^m : \abs{y^i}<\epsilon \text{ for all } i \}$ for some $\epsilon >0$, and a countable subset $A \subset \mathbb R^{m-n}$, such that for the leaf $L$ through $x$ we have $$u(U \cap L) = \{ y \in u(U) : (y^{n+1}, \dots, y^m) \in A \}.$$ Each leaf is an initial submanifold. If furthermore the distribution has locally constant rank, the theorem holds with the same $n$ for each leaf $L$. 
\end{theorem}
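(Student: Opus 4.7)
My plan is to build a flow box chart adapted to $H$ at $x$, using horizontal vectorfields for the leaf directions and arbitrary local vectorfields for the transverse directions. Set $n = \dim H_x$; by smoothness of $H$ there exist $X_1, \dots, X_n \in \mathfrak X_{loc,H}(M)$ whose values at $x$ form a basis of $H_x$, and I extend this to a local frame $X_1, \dots, X_m$ of $TM$ near $x$ by adjoining arbitrary local vectorfields $X_{n+1}, \dots, X_m$. Then
\begin{equation*}
\varphi(t_1, \dots, t_m) := (Fl^{X_1}_{t_1} \circ \dots \circ Fl^{X_m}_{t_m})(x)
\end{equation*}
has invertible differential at the origin, hence restricts to a diffeomorphism from some open cube $\{t \in \mathbb R^m : |t_i| < \epsilon\}$ onto an open neighbourhood $U$ of $x$. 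I set $u := \varphi^{-1}$ and, for each $c = (c_{n+1}, \dots, c_m)$ with $|c_k| < \epsilon$, denote by $S_c$ the slice $\{y \in U : u^k(y) = c_k \text{ for } k = n+1, \dots, m\}$.

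Next I would show that every slice $S_c$ meeting $L$ is entirely contained in $L$ and is open in $L$. Since $X_1, \dots, X_n$ are horizontal, Lemma~\ref{lem:integralmanifold}(1) together with Lemma~\ref{lem:frelated} gives $Fl^{X_k}_t \circ i = i \circ Fl^{i^*X_k}_t$ whenever both sides are defined, so each flow $Fl^{X_k}_t$ maps $i(L)$ into itself. If $p \in S_c \cap L$ then flowing backwards successively along $X_n, \dots, X_1$ keeps the trajectory in $L$ and reaches $q := \varphi(0, \dots, 0, c_{n+1}, \dots, c_m)$, after which forward flows along $X_1, \dots, X_n$ sweep out all of $S_c$ while staying in $L$. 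Moreover, for $q \in L$ one has $\dim H_q = \dim L = n$, so the $n$ horizontal vectors $X_1(q), \dots, X_n(q)$, linearly independent by continuity, span $H_q$; the $i$-related fields $i^*X_k$ thus have linearly independent values at $\tilde q := i^{-1}(q)$, and
\begin{equation*}
(t_1, \dots, t_n) \mapsto (Fl^{i^*X_1}_{t_1} \circ \dots \circ Fl^{i^*X_n}_{t_n})(\tilde q)
\end{equation*}
is a local diffeomorphism into $L$ whose image coincides locally with $i^{-1}(S_c)$. Hence each slice contained in $L$ corresponds to an open subset of $L$.

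Define $A := \{c \in \mathbb R^{m-n} : |c_k| < \epsilon,\; S_c \subset L\}$; the previous step yields $u(U \cap L) = \{y \in u(U) : (y^{n+1}, \dots, y^m) \in A\}$. The preimages $i^{-1}(S_c)$ for $c \in A$ are pairwise disjoint open subsets of $L$, and since $L$ carries a Riemannian metric induced from $M$ it is second-countable, so $A$ is at most countable. To prove that $L$ is an initial submanifold it is enough to verify that the chart $(U, u)$ satisfies $C_x(U \cap L) = S_0$. The inclusion $\supseteq$ is clear; for $\subseteq$, a smooth curve $\gamma : [0,1] \to M$ with $\gamma(0) = x$ and $\gamma([0,1]) \subset U \cap L$ induces a continuous map $t \mapsto (u^{n+1}(\gamma(t)), \dots, u^m(\gamma(t)))$ into the countable set $A$, which must therefore be constant and, since it vanishes at $t = 0$, identically zero; hence $\gamma([0,1]) \subset S_0$. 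Running the same construction centered at an arbitrary $y \in L$ yields an initial submanifold chart at $y$. Finally, if $H$ has locally constant rank then $\dim H_y = n$ throughout a neighbourhood of $x$, so the same $n$ governs every leaf through $U$, and by connectedness $n$ is constant on each connected component of $M$.

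The main obstacle is the slice analysis in the second paragraph: one has to exploit the foliation-preserving nature of horizontal flows together with the lifting property $Fl^{X_k}_t \circ i = i \circ Fl^{i^*X_k}_t$ to show that a slice intersecting $L$ not only lies inside $L$ but also appears as an open piece of $L$ in its manifold topology. Once that is established, countability of $A$ and the initial submanifold property fall out cheaply from second-countability of $L$ and the elementary fact that a continuous map from a connected set into a countable subset of Euclidean space must be constant.
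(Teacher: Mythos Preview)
Your proof is correct and follows essentially the same strategy as the paper's: build a chart by composing flows of horizontal vectorfields $X_1,\dots,X_n$ with a transverse parametrization, use that horizontal flows preserve leaves to show $U\cap L$ is a union of full slices, invoke second countability of $L$ for the countability of $A$, and deduce the initial submanifold property. The only cosmetic difference is that the paper parametrizes the transverse directions via an auxiliary coordinate chart $v^{-1}(0,\dots,0,t^{n+1},\dots,t^m)$, whereas you use flows $Fl^{X_{n+1}}_{t_{n+1}}\circ\dots\circ Fl^{X_m}_{t_m}(x)$ of arbitrary complementary vectorfields; both produce a valid flow-box chart, and the rest of the argument is the same.
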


\begin{definition} \label{def:distinguishedchart} \index{distinguished chart} \index{plaque}
The chart $(u,U)$ of the above theorem is called a distinguished chart for the distribution or the foliation. A connected component of $U \cap L$ is called a plaque. 
\end{definition}

\begin{proof}[Proof of Theorem \ref{thm:globalsussmann}]
Let $L$ be the leaf through $x$, $\dim L = n$. Let $X_1, \dots, X_n \in \mathfrak X_{loc,H}(M)$ be local vectorfields such that $X_1(x), \dots, X_n(x)$ is a basis of $H_x$. We choose a chart $(V,v)$ centered at $x \in M$ such that the vectors 
$$X_1(x), \dots, X_n(x), \evaluate{\frac{\partial}{\partial v^{n+1}}}{x}, \dots, \evaluate{\frac{\partial}{\partial v^m}}{x}$$
form a basis of $T_x M$. Then 
$$f(t_1, \dots, t_n) := (Fl_{t_1}^{X_1} \circ \dots \circ Fl_{t_n}^{X_n}) (v^{-1}(0, \dots, 0, t^{n+1}, \dots, t^m)$$
is a diffeomorphism from a neighbourhood of zero in $\mathbb R^n$ onto a neighbourhood of $x$ in $M$. Let $(U,u)$ be the chart given by $f^{-1}$, suitably restricted. We have 
$$y \in L \Longleftrightarrow (Fl_{t_1}^{X_1} \circ \dots \circ Fl_{t_n}^{X_n})(y) \in L$$
for all $t_1, \dots, t_n$ and all $y$ for which both expressions make sense. So we have 
$$f(t_1, \dots, t_n) \in L \Longleftrightarrow f(0, \dots, 0, t^{n+1}, \dots, t^m) \in L,$$
and consequently $L \cap U$ is the disjoint union of connected sets of the form $\{ y \in U : (u^{n+1}(y), \dots, u^m(y)) = \text{ constant } \}$. Since $L$ is a connected immersed submanifold of $M$, it is second countable and only a countable set of constants can appear in the description of $U \cap L$ given above. We see that $u(C_x(L \cap U)) = u(U) \cap (\mathbb R^n \times \{0\})$, so we have found initial submanifold charts for $L$, and therefore $L$ is an initial submanifold. According to the proof of Lemma \ref{lem:initialsubmanifolduniquestructure}, the charts that make $(L,i)$ an immersed submanifold with the property that $i:L \rightarrow M$ is an initial mapping are given by constraining $u$ to $C_x(L \cap U)$. However, in Lemma \ref{lem:integralmanifold}, $L$ as an integral manifold has already been given a smooth manifold structure. We will see immediately that the two structures are the same: $u | C_x(L \cap U)$ is the inverse of the map $(t_1, \dots t_n) \mapsto ( Fl_{t_1}^{X_1} \circ \dots \circ Fl_{t_n}^{X_n})(x)$, and therefore $u | C_x(L \cap U)$ is just one of the charts given to $L$ in Lemma \ref{lem:integralmanifold}. 

The arguments given above are valid for any leaf of dimension $n$ meeting $U$, so also the assertion for an integrable distribution of locally constant rank follows. 
\end{proof}

\subsubsection{Orbits and integral manifolds} \label{sec:orbits}

\begin{definition} \index{pseudo-group of local diffeomorphisms}
A pseudo-group of local diffeormorphisms is a set $G$ of local diffeormorphisms such that for each $\varphi, \psi \in G$, we have $$\varphi \circ \psi \in G \text{ and } \varphi^{-1} \in G,$$ 
where $\varphi \circ \psi$ and $\varphi^{-1}$ are as in Definition \ref{def:localdiffeomorphism}. For $\mathcal V \subset \mathfrak X_{loc,H}(M)$, let $G_{\mathcal V}$ denote the smallest pseudo-group of local diffeomorphisms which contains the flows of all vectorfields in $\mathcal V$.
\end{definition}

\begin{remark}\label{rem:elementsofgv}
The elements of $G_{\mathcal V}$ are precisely the maps of the form $$Fl^{X_1}_{t_1} \circ \dots \circ Fl^{X_n}_{t_n}, \text{ with } X_i \in \mathcal V.$$ 
\end{remark}

\begin{definition}\index{G-equivalent @ $G$-equivalent} \index{orbit}
Let $G$ be a pseudo-group of local diffeomorphisms on $M$. Two points $x,y \in M$ are called $G$-equivalent if there is a $\varphi \in G$ such that $\varphi(x) = y$. This defines an equivalence relation on $M$. Its equivalence classes are called the orbits of $G$ or $G$-orbits. 
\end{definition}

\begin{theorem}\label{thm:orbitisintegralmanifold}
Let $\mathcal V \subset \mathfrak X_{loc}(M)$ and $x \in M$. Then the $G_{\mathcal V}$-orbit through $x$ is the maximal integral manifold through $x$ of the distribution spanned by $\mathcal S(\mathcal V)$. 
\end{theorem}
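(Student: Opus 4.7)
The plan is to prove the two inclusions separately, where one direction amounts to ``orbits stay inside leaves'' and the other relies on Lemma~\ref{lem:elementsofsv} to realize flows of vectorfields in $\mathcal S(\mathcal V)$ as compositions of flows of vectorfields in $\mathcal V$. Denote by $H$ the distribution spanned by $\mathcal S(\mathcal V)$, which is integrable by Theorem~\ref{thm:sussmann} (implication (\ref{thm:sussmann3})$\Rightarrow$(\ref{thm:sussmann1})), so by Lemma~\ref{lem:integralmanifold}(\ref{lem:integralmanifold3}) there exists a unique maximal integral manifold $L_x$ of $H$ through $x$. Let $O_x$ denote the $G_{\mathcal V}$-orbit through $x$.

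For the inclusion $O_x \subset L_x$, I would argue as follows. Any element of $\mathcal V$ lies in $\mathcal S(\mathcal V)$ and hence is a horizontal vectorfield for $H$. Given $y \in L_x$ and $X \in \mathcal V$, Lemma~\ref{lem:integralmanifold}(\ref{lem:integralmanifold1}) lets us pull $X$ back to a local vectorfield $i^*X$ on $L_x$, and Lemma~\ref{lem:frelated} gives $Fl^X_t \circ i = i \circ Fl^{i^*X}_t$, so $Fl^X_t$ sends points of $L_x$ to points of $L_x$ wherever defined. By Remark~\ref{rem:elementsofgv}, any element of $G_{\mathcal V}$ is a finite composition of such flows, so inductively $O_x \subset L_x$.

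For the reverse inclusion $L_x \subset O_x$, I aim to show $O_x$ is open in $L_x$ with respect to the manifold topology of $L_x$. Fix $z \in O_x$ and let $n = \dim L_x$. Choose $X_1,\dots,X_n \in \mathcal S(\mathcal V)$ such that $X_1(z),\dots,X_n(z)$ span $H_z = T_z L_x$. As in the proof of Theorem~\ref{thm:sussmann}, the map
\begin{equation*}
f(t_1,\dots,t_n) = (Fl^{X_1}_{t_1} \circ \dots \circ Fl^{X_n}_{t_n})(z)
\end{equation*}
parameterizes an open neighbourhood of $z$ in $L_x$ for small $(t_1,\dots,t_n)$. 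By Lemma~\ref{lem:elementsofsv}, each $X_i$ has the form $(Fl^{Y_1}_{s_1} \circ \dots \circ Fl^{Y_k}_{s_k})^* Z$ with $Y_j, Z \in \mathcal V$, and then by Lemma~\ref{lem:frelated} the flow $Fl^{X_i}_{t}$ equals $Fl^{Y_k}_{-s_k} \circ \dots \circ Fl^{Y_1}_{-s_1} \circ Fl^Z_t \circ Fl^{Y_1}_{s_1} \circ \dots \circ Fl^{Y_k}_{s_k}$, which is an element of $G_{\mathcal V}$. Hence $f(t_1,\dots,t_n) \in O_z = O_x$ for all $(t_1,\dots,t_n)$ in a neighbourhood of zero, so $O_x$ contains a neighbourhood of $z$ in $L_x$.

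It follows that $O_x$ is open in $L_x$. The same argument applied to any other point $y \in L_x$ shows that $O_y$ is open in $L_x$ (note $O_y \subset L_y = L_x$ by the first inclusion, since $y \in L_x$). Since the $G_{\mathcal V}$-orbits partition $M$, the complement $L_x \setminus O_x$ is a union of such open orbits, hence open. Thus $O_x$ is both open and closed in $L_x$, nonempty, and $L_x$ is connected by definition of an integral manifold, yielding $O_x = L_x$. The maximality of $L_x$ as an integral manifold then matches the maximality of $O_x$ as a $G_{\mathcal V}$-orbit. The main subtlety is keeping topologies straight: openness must be verified in the intrinsic manifold topology of $L_x$ (given by Lemma~\ref{lem:integralmanifold}(\ref{lem:integralmanifold3})), not in the subspace topology inherited from $M$; the local parameterization by $f$ makes this transparent.
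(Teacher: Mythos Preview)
Your proof is correct and follows essentially the same approach as the paper's: both directions rely on the same ingredients (stability of the leaf under horizontal flows via Lemma~\ref{lem:integralmanifold}(\ref{lem:integralmanifold1}) and Lemma~\ref{lem:frelated} for $O_x \subset L_x$, and the local parameterization $f$ together with Lemma~\ref{lem:elementsofsv} plus a connectedness argument for $L_x \subset O_x$). The only cosmetic difference is that the paper phrases the connectedness step as ``$G_{\mathcal V}$-equivalence classes are open in $L$ and partition it,'' whereas you phrase it as ``$O_x$ is clopen in $L_x$''---these are the same argument.
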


\begin{proof}
The distribution spanned by $\mathcal S(\mathcal V)$ is integrable because of the Stefan-Sussmann Theorem \ref{thm:sussmann}. Let $L$ be the leaf through $x$ of the distribution spanned by $\mathcal S(\mathcal V)$, and let $O$ be the $G_{\mathcal V}$-orbit containing $x$. 

\begin{enumerate}[(1)]

\item To show $L \subset O$, we will prove that any two points in $L$ are $G_{\mathcal V}$-equivalent. By the part (\ref{thm:sussmann3})\ $\Rightarrow$\ (\ref{thm:sussmann1}) of the proof of the Stefan-Sussmann Theorem \ref{thm:sussmann} we know that the map $$f(t_1, \dots, t_n) := (Fl_{t_1}^{X_1} \circ \dots \circ Fl_{t_n}^{X_n})(y),$$ where $X_i \in \mathcal S(\mathcal V)$ and $y \in L$, is a diffeomorphism from an open connected neighbourhood of zero in $\mathbb R^n$ onto an open subset $U$ of $L$. Because of Lemma \ref{lem:elementsofsv} and Lemma \ref{lem:frelated}, $Fl_{t_i}^{X_i}$ is the concatenation of flows of vectorfields in $\mathcal V$. Therefore by definition any point in $U$ is $G_{\mathcal V}$-equivalent to $y$. Because $y$ was chosen arbitrarily, every point $y \in L$ has a neighbourhood whose points are all $G_{\mathcal V}$-equivalent. So the $G_{\mathcal V}$-equivalence classes in $L$ are open in $L$ and form a partition of $L$. Because $L$ is connected, all but one of these equivalence classes are empty, and therefore all points in $L$ are $G_{\mathcal V}$-equivalent. 

\item It remains to show the other inclusion $O \subset L$. Let $y \in O$, i.e. 
$$y=Fl^{X_1}_{t_1} \circ \dots \circ Fl^{X_k}_{t_k}(x) \quad \text{with}\quad X_i \in \mathcal V \subset \mathcal S(\mathcal V).$$ Then $y \in L$ because $L$ is stable under the flows of vectorfields in $\mathcal S(\mathcal V)$. This follows from Lemma \ref{lem:frelated} applied to the pullback $i^* X$ of a vectorfield, where $i:L \rightarrow M$ is the inclusion. \qedhere

\end{enumerate}
\end{proof}

\subsubsection{Accessible set and integral manifolds} \label{sec:accessibleset}

\begin{theorem} \label{thm:accessiblesetisintegralmanifold}
Let $H$ be a regular distribution on $M$ and $x \in M$. Then the accessible set $Acc(x)$ is the leaf through $x$ of the distribution spanned by $\mathcal S(\mathfrak X_{loc,H}(M))$. 
\end{theorem}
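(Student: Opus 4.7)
The plan is to identify $Acc(x)$ with the leaf through $x$ of the distribution spanned by $\mathcal S(\mathfrak X_{loc,H}(M))$, which we denote by $L$. By Theorem \ref{thm:orbitisintegralmanifold}, $L$ is also the $G_{\mathfrak X_{loc,H}(M)}$-orbit through $x$, and I would prove the two inclusions $L \subset Acc(x)$ and $Acc(x) \subset L$ using these two different descriptions of $L$.

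The inclusion $L \subset Acc(x)$ is the easy direction. By Remark \ref{rem:elementsofgv}, every point $y \in L$ has the form $y = Fl^{X_1}_{t_1} \circ \dots \circ Fl^{X_n}_{t_n}(x)$ for some $X_i \in \mathfrak X_{loc,H}(M)$. Each individual flow segment $s \mapsto Fl^{X_i}_s(\,\cdot\,)$ is a curve controlled by the single horizontal vectorfield $X_i$ with constant control equal to $1$, so the concatenation of these segments is a horizontal curve connecting $x$ to $y$.

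For the converse $Acc(x) \subset L$, I would reduce via Definition \ref{def:horizontalcurve} to showing that any single curve $\gamma:[a,b] \to M$ controlled by horizontal vectorfields and starting at a point of $L$ remains inside $L$. Writing $\dot\gamma(t) = \sum_{i=1}^k u_i(t) X_i(\gamma(t))$ with $u_i \in L^1$ and $X_i \in \mathfrak X_{loc,H}(M)$, the key observation is that each $X_i$ lies in $H$ and hence in the distribution spanned by $\mathcal S(\mathfrak X_{loc,H}(M))$, so at every point of $L$ it is tangent to $L$. Lemma \ref{lem:integralmanifold}(\ref{lem:integralmanifold1}) then furnishes smooth pullbacks $i^* X_i \in \mathfrak X_{loc}(L)$ (with $i:L \to M$ the inclusion) that are $i$-related to $X_i$. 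Solving the pulled-back ODE $\dot{\tilde\gamma} = \sum_i u_i(t)\, (i^* X_i)(\tilde\gamma)$ on $L$ with initial value $\gamma(a)$ via the Carath\'eodory theory for $L^1$-controlled ODEs, and projecting back by $i$, I obtain a solution of the original ODE on $M$ with the same initial value; uniqueness forces $\gamma = i \circ \tilde\gamma$ on some interval $[a, a+\epsilon]$, so $\gamma$ remains in $L$ there.

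The main obstacle is the passage from this local statement to a global one. I would set $T = \sup\{t \in [a,b] : \gamma([a,t]) \subset L\}$ and derive a contradiction from the assumption $T < b$. Letting $L'$ denote the leaf through $\gamma(T)$, the local argument above, applied both forward and backward in time from $T$ inside $L'$, shows that $\gamma$ remains in $L'$ on an interval $(T-\delta, T+\delta)$. For $t \in (T-\delta, T)$ the point $\gamma(t)$ therefore lies in $L \cap L'$; since the maximal integral manifolds of an integrable distribution partition $M$ (by the remark following Lemma \ref{lem:integralmanifold}), this forces $L' = L$, and hence $\gamma([a, T+\delta]) \subset L$, contradicting the definition of $T$. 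It follows that $T = b$ and $\gamma(b) \in L$, as required.
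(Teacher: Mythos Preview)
Your proposal is correct and follows essentially the same route as the paper: both directions are argued the same way (orbit description for $L \subset Acc(x)$, pullback of the controlling vectorfields to $L$ plus local existence/uniqueness for $Acc(x) \subset L$). The only cosmetic difference is in the local-to-global step: the paper argues that both $\{t:\gamma(t)\in L\}$ and its complement are open in $[a,b]$ and invokes connectedness, whereas you run an equivalent supremum argument using the partition of $M$ into leaves.
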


\begin{proof}
Let $E$ be the distribution spanned by $\mathcal S(\mathfrak X_{loc,H}(M))$. Then $E$ is integrable because the Stefan-Sussmann Theorem \ref{thm:sussmann}. Let $L$ be the leaf through $x$ of the distribution $E$.

\begin{enumerate}[(1)]

\item To show $L \subset Acc(x)$, we have to show that $L$ is horizontally pathwise connected. This is a consequence of Theorem \ref{thm:orbitisintegralmanifold} with $\mathcal V=\mathfrak X_{loc,H}(M)$ because any $\mathfrak X_{loc,H}(M)$-orbit is of course horizontally connected. 

\item To show the other inclusion $Acc(x) \subset L$, we have to show that a horizontal path starting in $L$ remains in $L$. In fact, it suffices to show that a path controlled by horizontal vectorfields remains in $L$. So let $\gamma:[0,1]\rightarrow M$ be an absolutely continuous path such that 
$$\dot\gamma(t)=\sum_{j=1}^k u_j(t)\ X_j(\gamma(t))$$
with $L^1$-functions $u_j$. We can pull the vectorfields $X_j$ back to $L$ along the inclusion $i:L \rightarrow M$ and look at the differential equation
$$\dot\gamma(t)=\sum_{j=1}^k u_j(t)\ (i^*X_j)(\gamma(t)), \quad \gamma(0)=i^{-1}(x)$$
on $L$. By restricting the controls to some smaller interval $[0,\epsilon]$, we can make their $L^1$-norm so small that we can apply Theorem \ref{thm:controlledcurve}, which gives us a solution $\tilde\gamma:[0,\epsilon]\rightarrow L$. By uniqueness, $\gamma$ and $\tilde\gamma$ coincide on $[0,\epsilon]$. From these arguments we see that $\gamma$ locally remains in $L$. Therefore the set $\{t \in [0,1]: \gamma(t) \in L \}$ is open and non-empty. But also the set $\{t \in [0,1]: \gamma(t) \notin L \}$ is open for the same reasons. Because $[0,1]$ is connected, one of these sets must be empty, and therefore $\gamma$ remains in $L$. \qedhere
\end{enumerate}
\end{proof}

\subsubsection{Proof of Chow's Theorem}

\begin{lemma} \label{lem:integrabledistributionisinvolutive}
An integrable distribution is involutive. 
\end{lemma}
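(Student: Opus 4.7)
The plan is to deduce involutivity pointwise: it suffices to show that for any two local horizontal vectorfields $X, Y$ and any point $x$ in their common domain of definition, $[X,Y](x) \in H_x$. Because integrability gives us an integral manifold $(N,i)$ of $H$ whose image contains $x$, the natural strategy is to transfer the bracket computation to $N$ and then push it back to $M$ via $i$.

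Concretely, I would proceed as follows. Fix horizontal $X, Y \in \mathfrak{X}_{loc,H}(M)$ and $x \in M$ in both their domains. Pick an integral manifold $(N,i)$ with $i(y) = x$ for some $y \in N$. By Lemma \ref{lem:integralmanifold}(\ref{lem:integralmanifold1}), the pullbacks $i^*X$ and $i^*Y$ are well-defined local vectorfields on $N$, and each is $i$-related (on the appropriate open set) to its counterpart on $M$. Then Lemma \ref{lem:frelated2} applies: $[i^*X, i^*Y]$ is $i$-related to $[X,Y]$, i.e.
\begin{equation*}
T i \circ [i^*X, i^*Y] = [X,Y] \circ i.
\end{equation*}
Evaluating at $y$ gives
\begin{equation*}
[X,Y](x) = T_y i \bigl([i^*X, i^*Y](y)\bigr) \in T_y i (T_y N) = H_x,
\end{equation*}
since $(N,i)$ is an integral manifold of $H$. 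As $x$ was arbitrary in the common domain of $X$ and $Y$, the local vectorfield $[X,Y]$ is horizontal wherever it is defined, proving that $\mathfrak X_{loc,H}(M)$ is involutive.

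I do not expect a genuine obstacle here, as both of the required tools (the existence of the pullback $i^*X$ for horizontal $X$, and the $f$-relatedness of brackets) have already been established in Section~\ref{sec:basics} and Section~\ref{sec:integralmanifolds}. The only point deserving a modicum of care is that $i^*X$ is only defined on the open set $i^{-1}(U_X)$, so the $i$-relatedness asserted by Lemma \ref{lem:integralmanifold}(\ref{lem:integralmanifold1}) must be read on this open set; this is harmless, since in the computation above we only evaluate at a single point $y$ that lies in $i^{-1}(U_X \cap U_Y)$ by construction.
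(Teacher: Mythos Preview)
Your proof is correct and follows essentially the same approach as the paper: pull back $X$ and $Y$ to the integral manifold via Lemma~\ref{lem:integralmanifold}(\ref{lem:integralmanifold1}), invoke Lemma~\ref{lem:frelated2} to conclude that $[i^*X,i^*Y]$ is $i$-related to $[X,Y]$, and deduce that $[X,Y]$ takes values in $H$. Your version is slightly more explicit in evaluating at the point $y$ and in noting the domain issue for $i^*X$, but the argument is the same.
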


\begin{proof}
Let $H$ be an integrable distribution; we have to show that $\mathfrak X_{loc,H}$ is involutive. Take vectorfields $X,Y \in \mathfrak X_{loc,H}$ defined near $x \in M$, and let $(N,i)$ be an integral manifold of $H$ near $x$. By Lemma \ref{lem:integralmanifold}.\ref{lem:integralmanifold1}, $i^*X$ and $i^*Y$ are local vectorfields on $N$, and they are $i$-related to $X$ and $Y$, respectively. By Lemma \ref{lem:frelated2}, also $[i^*X,i^*Y]$ is $i$-related to $[X,Y]$. Therefore $[X,Y]$ is horizontal. 
\end{proof}

\begin{theorem}[Chow's Theorem]\label{thm:chow1}\index{Chow's Theorem}
If $H$ is a bracket generating distribution on a connected manifold, then any two points in the manifold can be connected by a horizontal path. 
\end{theorem}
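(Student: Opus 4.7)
The plan is to identify $Acc(x)$ as the leaf of an auxiliary distribution and then force that distribution to coincide with $TM$. Let $E$ denote the smooth distribution on $M$ spanned by $\mathcal{S}(\mathfrak{X}_{loc,H}(M))$, and fix any $x \in M$. Theorem~\ref{thm:accessiblesetisintegralmanifold} identifies the accessible set $Acc(x)$ with the leaf $L_x$ of $E$ through $x$, so the task reduces to showing $L_x = M$. I will obtain this by establishing $E = TM$ and then appealing to Remark~\ref{rem:distributionwithfullrank}, which says that on a connected manifold the only maximal integral manifold of $TM$ is $M$ itself.

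To prove $E = TM$, I first observe that by the Stefan--Sussmann Theorem~\ref{thm:sussmann}, $E$ is integrable, and hence by Lemma~\ref{lem:integrabledistributionisinvolutive} the set of local sections $\mathfrak{X}_{loc,E}(M)$ is involutive. Since every local horizontal vectorfield takes values in $H \subset E$, I have $\mathfrak{X}_{loc,H}(M) \subset \mathfrak{X}_{loc,E}(M)$. Because the Lie hull $\mathcal{L}(\mathfrak{X}_{loc,H}(M))$ is by definition the smallest involutive set of local vectorfields containing $\mathfrak{X}_{loc,H}(M)$, it follows that $\mathcal{L}(\mathfrak{X}_{loc,H}(M)) \subset \mathfrak{X}_{loc,E}(M)$. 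The bracket generating hypothesis now enters: by assumption $\mathcal{L}(\mathfrak{X}_{loc,H}(M))$ spans $TM$ pointwise, and hence so does the larger collection $\mathfrak{X}_{loc,E}(M)$. Thus $E_y = T_y M$ for every $y \in M$.

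With $E = TM$ in hand, Remark~\ref{rem:distributionwithfullrank} yields $L_x = M$, so $Acc(x) = M$. Since $x$ was arbitrary, every point of $M$ is accessible from every other point via a horizontal curve, which is precisely Chow's Theorem.

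The main obstacle I foresee is not conceptual but a bookkeeping one: one must carefully distinguish involutivity of the horizontal distribution $H$ itself (which is neither assumed nor generally true here) from involutivity of the auxiliary, larger distribution $E$ (which is automatic from integrability via Lemma~\ref{lem:integrabledistributionisinvolutive}). Once this distinction is kept straight, the chain of inclusions $\mathfrak{X}_{loc,H}(M) \subset \mathfrak{X}_{loc,E}(M)$, the passage to the Lie hull by involutivity of $\mathfrak{X}_{loc,E}(M)$, and the bracket generating hypothesis mechanically deliver $E = TM$, and the rest of the argument is a direct appeal to the results already assembled in this section.
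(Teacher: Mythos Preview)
Your proof is correct and follows essentially the same approach as the paper's: define $E$ as the distribution spanned by $\mathcal S(\mathfrak X_{loc,H}(M))$, use integrability of $E$ together with Lemma~\ref{lem:integrabledistributionisinvolutive} and the bracket generating hypothesis to force $E=TM$, and then invoke Theorem~\ref{thm:accessiblesetisintegralmanifold} and connectedness to conclude $Acc(x)=M$. The only difference is that you spell out explicitly the inclusion $\mathcal L(\mathfrak X_{loc,H}(M))\subset \mathfrak X_{loc,E}(M)$ via involutivity of the latter, which the paper compresses into a single clause.
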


\begin{proof}
Let $E$ denote the integrable distribution spanned by $\mathcal S(\mathfrak X_{loc,H}(M))$, and let $x \in M$. By the bracket generating condition and Lemma \ref{lem:integrabledistributionisinvolutive}, $E=TM$. $M$ is connected, so $M$ is the leaf through $x$ of $E$. By Theorem \ref{thm:accessiblesetisintegralmanifold} $Acc(x)$ also is the leaf through $x$ of $E$, so $Acc(x)=M$. 
\end{proof}

%
%

\subsection{Second proof of Chow's Theorem}\label{sec:secondproofofchow}

This proof works only for regular distributions. The idea of the proof comes from \citet{Montgomery}. He derives Chow's theorem from the Ball-Box Theorem, which is interesting in itself. 

\begin{theorem}[Ball-box Theorem]\label{ballboxtheorem}\index{Ball-box Theorem}
Let $H$ be a regular bracket generating distribution on an $n$-dimensional manifold $M$, and let $x_0 \in M$. Then there exist coordinates $y_1, \dots , y_n$ centered at $x_0$ and positive constants $c < C$ and $\epsilon_0 > 0$
 such that for all $\epsilon < \epsilon_0$, 
 \begin{displaymath}
 \MyBox^w(c \epsilon) \subset B(\epsilon, x_0) \subset \MyBox^w(C \epsilon).
 \end{displaymath}
Here, $w$ is  some vector in $\mathbb{N}^n$ called the weighting associated to the growth vector at $x_0$ and $\MyBox^w(\epsilon)$ is the $w$-weighted  box of size $\epsilon$ given in coordinates by the expression
\index{weighted box} \index{box}
\begin{displaymath}
\MyBox^w(\epsilon) = \left\{ y \in \mathbb{R} : \abs{y_i} \leq \epsilon^{w_i}, i = 1, \dots , n \right\}.
\end{displaymath}
\end{theorem}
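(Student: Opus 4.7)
The plan is to build the proof around the notion of \emph{privileged coordinates} at $x_0$, adapted to the flag $H^1 \subset H^2 \subset \dots \subset H^s = TM$, where $H^1 = H$ and $H^{i+1} = H^i + [H^1,H^i]$. Since $H$ is regular and bracket generating, the dimensions $n_i = \dim H^i_{x_0}$ are well defined and eventually equal $n$; this gives the growth vector $(n_1, n_2, \dots, n_s)$ and the associated weights $w_j = \min\{i : j \leq n_i\}$. As a first step, I would pick horizontal vectorfields $Z_1, \dots, Z_k$ spanning $H$ near $x_0$ and then choose iterated brackets $X_1, \dots, X_n$ of the $Z_i$, each $X_j$ being a bracket of length $w_j$, such that $X_1(x_0), \dots, X_n(x_0)$ form a basis of $T_{x_0}M$. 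The coordinates $(y_1, \dots, y_n)$ are defined via $\Phi(y_1, \dots, y_n) = (\mathrm{Fl}^{X_1}_{y_1} \circ \dots \circ \mathrm{Fl}^{X_n}_{y_n})(x_0)$, a local diffeomorphism by the inverse function theorem.

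For the upper inclusion $B(\epsilon,x_0) \subset \MyBox^w(C\epsilon)$, I would show that if $\gamma:[0,T] \to M$ is a horizontal curve from $x_0$ with $l(\gamma) \leq \epsilon$, then $|y_j(\gamma(t))| \leq C\epsilon^{w_j}$ for all $t$ and $j$. The key estimate is that a horizontal curve only moves directly in the $H^1$ directions, so only $y_j$ with $w_j = 1$ can change at first order in $\epsilon$; the higher-weight coordinates can only change through bracket effects, and a standard Baker--Campbell--Hausdorff / commutator expansion shows that a horizontal motion of size $\epsilon$ contributes at most $O(\epsilon^{w_j})$ to the $j$-th coordinate. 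Concretely, expanding $y_j$ in Taylor series along $\gamma$ and using that $\mathcal{L}_{Z_i}^r y_j = 0$ at $x_0$ for $r < w_j$ (because $y_j$ ``kills'' all directions of weight $< w_j$ in the chosen privileged chart) yields the bound; this vanishing property of $y_j$ is precisely what makes the coordinates \emph{privileged}.

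For the lower inclusion $\MyBox^w(c\epsilon) \subset B(\epsilon,x_0)$, which is the genuinely hard step, I would exhibit for each weight-$w_j$ direction an explicit horizontal ``commutator curve'' $\sigma_j$ of length $O(\epsilon)$ whose endpoint in coordinates is approximately $\epsilon^{w_j}$ in the $X_j$-direction. Such a curve is built as a $w_j$-fold commutator of flows of the generators $Z_{i_1}, \dots, Z_{i_{w_j}}$ whose bracket equals $X_j$ at $x_0$: the motion
\begin{equation*}
\mathrm{Fl}^{Z_{i_1}}_{-t} \circ \dots \circ \mathrm{Fl}^{Z_{i_{w_j}}}_{-t} \circ \mathrm{Fl}^{Z_{i_{w_j}}}_{t} \circ \dots \circ \mathrm{Fl}^{Z_{i_1}}_{t}
\end{equation*}
with $t \sim \epsilon$ advances in the direction of $X_j$ by $\sim t^{w_j}$. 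Concatenating these commutator curves with signed scaling parameters defines a map $F:(s_1, \dots, s_n) \mapsto M$, horizontal by construction and of length $O(\epsilon \sum |s_j|^{1/w_j})$. The privileged-coordinate analysis shows that in coordinates $F(s_1, \dots, s_n) = (s_1, \dots, s_n) + O(\text{higher weighted order})$, so $F$ is a local diffeomorphism on a ball of appropriate anisotropic size by a weighted inverse function theorem. Inverting $F$ provides, for any $y \in \MyBox^w(c\epsilon)$ with $c$ small enough, a horizontal curve of length $\leq \epsilon$ reaching $y$.

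The main obstacle will be the lower bound: the commutator construction must be carefully scaled so that the length remains $O(\epsilon)$ while the endpoint covers the full anisotropic box, and the ``weighted inverse function theorem'' for the endpoint map needs precise remainder estimates that distinguish coordinates by their weights. Once this quantitative surjectivity is established, Chow's Theorem follows as an immediate corollary (applied pointwise and using connectedness), giving a second route to Theorem~\ref{thm:chow} for the regular case.
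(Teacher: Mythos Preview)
First, a point of comparison: the paper does \emph{not} prove the full Ball-Box Theorem. Immediately after stating it, the text says that only the weaker assertion that $B(\epsilon,x_0)$ is a neighbourhood of $x_0$ will be established (Theorem~\ref{schwachesballboxtheorem}), and it refers to Nagel--Stein--Wainger for the complete two-sided estimate. So your outline is aiming at strictly more than the paper delivers, and there is no in-paper proof of the upper inclusion to compare against.

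For the part the paper does treat (essentially a qualitative version of your lower inclusion), the approaches differ. You propose privileged coordinates and a weighted inverse-function argument; the paper instead uses the Mauhart--Michor result on iterated commutators of flows (Theorem~\ref{michorstheorem}) together with a $C^1$ patching via $w_j$-th roots (Lemmas~\ref{ersteswurzellemma} and~\ref{zweiteswurzellemma}) to build an open map whose image lies in $B(\epsilon,x_0)$. Your route is the standard one in the literature and, carried through, yields the sharp two-sided box estimate; the paper's route is more elementary but only gives openness of the sub-Riemannian ball, not the quantitative box comparison.

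There is a genuine error in your commutator construction. The expression you wrote,
\begin{equation*}
\mathrm{Fl}^{Z_{i_1}}_{-t} \circ \dots \circ \mathrm{Fl}^{Z_{i_{w_j}}}_{-t} \circ \mathrm{Fl}^{Z_{i_{w_j}}}_{t} \circ \dots \circ \mathrm{Fl}^{Z_{i_1}}_{t},
\end{equation*}
is the identity: the innermost pair $\mathrm{Fl}^{Z_{i_{w_j}}}_{-t}\circ\mathrm{Fl}^{Z_{i_{w_j}}}_{t}$ cancels, then the next pair, and so on. What you need is an iterated bracket of flows in the sense $[\varphi,\psi]_t = \psi_t^{-1}\circ\varphi_t^{-1}\circ\psi_t\circ\varphi_t$, built recursively to mirror the bracket expression defining $X_j$; Theorem~\ref{michorstheorem} then gives that its first nonvanishing derivative at $t=0$ is $w_j!\,X_j(x_0)$, which is the input your endpoint analysis requires.

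One further point to flag as an actual step rather than an assertion: that the second-kind exponential coordinates $\Phi$ built from an adapted frame are \emph{privileged} (i.e.\ that $Z_{i_1}\cdots Z_{i_r} y_j$ vanishes at $x_0$ for every word of length $r<w_j$) is a nontrivial lemma, and your upper-inclusion argument rests on it.
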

 
To prove Chow's Theorem it is sufficient to show that the subriemannian ball $B(\epsilon, x_0)$ is a neighbourhood of $x_0$, and this is all we will prove here. The reason is that we were confronted with technical difficulties in the proof given in the book of \citet{Montgomery}. We refer to \citet{Nagel} for the full proof and a much longer, but very nice and complete treatment of the subject. 

The outline of the proof presented in this work is the following: Locally, some horizontal vectorfields $X_1, \dots, X_k$ together with a selection of their brackets span the tangent space. To these vectorfields correspond the flows $Fl^{X_1}_t, \dots, Fl^{X_k}_t$ and a selection of brackets of these flows. It will take some time to calculate the first non-vanishing derivative at $t=0$ of the brackets of flows, but this is done in Theorem \ref{michorstheorem}: The first non-vanishing derivative at $t=0$ of a bracket of $Fl^{X_1}_t, \dots, Fl^{X_k}_t$ is the same bracket applied to $X_1, \dots, X_k$. In Theorem \ref{schwachesballboxtheorem} this result will allow us to construct a $C^1$-diffeomorphism $\psi$ from the concatenation of the above flows and their brackets. Finally the openness of $B(\epsilon,x_0)$ follows from the openness of $\psi$. 

\begin{definition}
For sets of local vectorfields $\mathcal V, \mathcal W \subset \mathfrak X_{loc}(M)$, we define 
\begin{equation*}
\begin{split}
\left[\mathcal V, \mathcal W \right] &= \left\{ \left[ X , Y \right] : X \in\mathcal V, Y \in \mathcal W \right\} \quad \text{and}\\
\mathcal V + \mathcal W &= \left\{ X + Y : X \in \mathcal V, Y \in \mathcal W \right\}.  
\end{split}
\end{equation*}
For a set of local vectorfields $\mathcal V$, we inductively define the subsets 
\begin{equation*}
\mathcal V_1 = \mathcal V, \quad \mathcal V_{k+1} = \mathcal V_k + [\mathcal V,\mathcal V_k].
\end{equation*}
\end{definition}

\begin{definition}
Let $H$ be a smooth distribution. Then the distribution spanned by $\mathfrak X_{loc,H}(M)_k$ will be denoted by $H_k$. 
\end{definition}

\begin{definition}
Let $C^\infty(M)$ denote the set of smooth functions from $M$ to $\mathbb{R}$. For a set of locally defined vectorfields $\mathcal{V}$, let 
\begin{displaymath}
\linearspan_{C^\infty(M)} \mathcal{V} = 
\left\{ \sum u_i X_i : u_i \in C^\infty(M), X_i \in \mathcal V  \right\},
\end{displaymath}
where the number of summands in the above sum has to be finite. 
\end{definition}

\begin{lemma}\label{lem:spanxhochk}
Let $\mathcal{V}$ be a set of locally defined vectorfields. Then 
\begin{equation*}
(\linearspan_{C^\infty(M)} \mathcal{V})_k = \linearspan_{C^\infty(M)} (\mathcal{V}_k).
\end{equation*}
Particularly, if $\mathcal V$ spans $H$, then $\mathcal V_k$ spans $H_k$. 
\end{lemma}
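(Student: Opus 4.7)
The plan is to proceed by induction on $k$, writing $W := \linearspan_{C^\infty(M)} \mathcal V$ and $U_k := \linearspan_{C^\infty(M)}(\mathcal V_k)$, so that the target identity reads $(W)_k = U_k$. The base case $k=1$ is immediate from $\mathcal V_1 = \mathcal V$ and the definitions. For the inductive step, the workhorse is the Leibniz identity
$$[fX,gY] = fg[X,Y] + f(Xg)Y - g(Yf)X,$$
which lets me commute $C^\infty(M)$-coefficients past the bracket at the price of lower-order terms that are themselves $C^\infty$-multiples of $X$ and $Y$. I will also need the easy observation that $\mathcal V \subset \mathcal V_k \subset \mathcal V_{k+1}$ for all $k$ (using $[X,X]=0$), so that $\mathcal V$-terms produced by Leibniz sit harmlessly inside $U_{k+1}$.

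For the inclusion $(W)_{k+1} \subset U_{k+1}$, an element of $(W)_{k+1} = (W)_k + [W,(W)_k]$ has the form $A + [B,C]$ with $A,C \in U_k$ and $B \in W$ by the induction hypothesis. Expanding $B = \sum_j b_j B_j$, $C = \sum_l c_l C_l$, $A = \sum_i a_i A_i$ with $A_i,C_l \in \mathcal V_k$ and $B_j \in \mathcal V$, bilinearity and the Leibniz identity turn every summand in $[B,C]$ into either $b_j c_l [B_j,C_l]$ (a $C^\infty$-multiple of an element of $[\mathcal V,\mathcal V_k]\subset \mathcal V_{k+1}$), $b_j(B_jc_l)\,C_l$ (a $C^\infty$-multiple of an element of $\mathcal V_k\subset \mathcal V_{k+1}$), or $-c_l(C_lb_j)\,B_j$ (a $C^\infty$-multiple of an element of $\mathcal V\subset \mathcal V_{k+1}$), and $A$ itself is in $U_k\subset U_{k+1}$.

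For the reverse inclusion, any element of $U_{k+1}$ has the form $\sum_i u_i A_i + \sum_j v_j[B_j,C_j]$ with $A_i,C_j \in \mathcal V_k$, $B_j\in\mathcal V$. The first sum lies in $U_k = (W)_k \subset (W)_{k+1}$. For each bracket summand I use $v_j[B_j,C_j] = [v_jB_j, C_j] + (C_j v_j)B_j$, which puts $(C_jv_j)B_j$ in $\linearspan_{C^\infty}\mathcal V \subset U_k$ and $[v_jB_j,C_j]$ in $[W,(W)_k]$. The only honest obstacle is that $(W)_{k+1}$ is defined as single-summand expressions $(W)_k + [W,(W)_k]$, so multiple bracket terms need to be assembled into one; I would handle this by first verifying that $(W)_{k+1}$ is closed under $C^\infty(M)$-linear combinations — scalar closure is immediate from Leibniz ($f[B,C]=[fB,C]-(Cf)B$), and additive closure of two brackets $[B_1,C_1]+[B_2,C_2]$ follows from $[B_1{+}B_2,C_1{+}C_2] = \sum_{i,j}[B_i,C_j]$ together with an induction on the number of bracket terms, again using Leibniz to dispose of the cross-terms into $U_k$. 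The main technical nuisance is precisely this bookkeeping of combining sums of brackets back into the single-bracket form demanded by the definition $\mathcal V_{k+1} = \mathcal V_k + [\mathcal V,\mathcal V_k]$. The final statement that $\mathcal V_k$ spans $H_k$ whenever $\mathcal V$ spans $H$ then follows by evaluating both sides of the main identity at a point $x\in M$, which yields the same linear subspace of $T_xM$.
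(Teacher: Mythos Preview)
Your approach is essentially the paper's: induction on $k$, with the Leibniz identity $[fX,gY]=fg[X,Y]+f(Xg)Y-g(Yf)X$ doing all the work. The paper organizes the two inclusions separately --- it first proves the standalone claim ``if $\mathcal W$ is a $C^\infty(M)$-module then so is $\mathcal W_k$'' and deduces $U_k\subset \mathcal W_k$ from $\mathcal V_k\subset\mathcal W_k$; your inclusion $(W)_{k+1}\subset U_{k+1}$ is line-for-line the paper's part~(2).

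Where your write-up stumbles is precisely the point you flag as ``the main technical nuisance'': additive closure of $(W)_{k+1}$. Your proposed remedy does not work. From $[B_1{+}B_2,C_1{+}C_2]=\sum_{i,j}[B_i,C_j]$ you get
\[
[B_1,C_1]+[B_2,C_2]=[B_1{+}B_2,\,C_1{+}C_2]-[B_1,C_2]-[B_2,C_1],
\]
so two brackets have been traded for three. The cross-terms $[B_1,C_2]$ and $[B_2,C_1]$ are again elements of $[W,(W)_k]$, not of $(W)_k=U_k$, and they carry no scalar coefficients on which Leibniz could act; there is nothing to ``dispose into $U_k$''. An induction on the number of bracket summands therefore does not terminate. (For what it is worth, the paper's own module argument only checks closure under scalar multiplication and is silent on addition, so this subtlety is not resolved there either; with the paper's set-theoretic definitions $\mathcal V+\mathcal W=\{X+Y\}$ and $[\mathcal V,\mathcal W]=\{[X,Y]\}$ it is a genuine issue.) The ``Particularly'' statement about spanning $H_k$ survives regardless, since only the inclusion $\mathcal W_k\subset\operatorname{span}_{C^\infty(M)}(\mathcal V_k)$ is needed for equality of the pointwise linear hulls, and that direction is solid in both your argument and the paper's.
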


\begin{proof}
Let $\mathcal W=\linearspan_{C^\infty(M)} (\mathcal{V})$.

\begin{enumerate}[(1)]
\item $\linearspan_{C^\infty(M)} (\mathcal{V}_k) \subset \mathcal W_k$: 
Clearly, $\mathcal{V} \subset \mathcal W$, so we have $\mathcal V_k \subset \mathcal W_k$, and 
$$\linearspan_{C^\infty(M)} (\mathcal{V}_k) \subset \linearspan_{C^\infty(M)} (\mathcal W_k) =\mathcal W_k.$$ The last equality holds because $\mathcal W_k$ is a $C^\infty(M)$-module, as will be shown right now. Indeed we claim that if $\mathcal W$ is a $C^\infty(M)$-module, then $\mathcal W_k$ is a $C^\infty(M)$-module, too. This is trivial for $k=1$. For $k \geq 1$ and $X \in \mathcal W_{k+1}$, we can write 
$$X=Y+[Z,W] \in \mathcal W_k + [\mathcal W, \mathcal W_k].$$ 
Let $f \in C^\infty(M)$. Then 
$$f X= f Y + (W f) Z + [f Z,W] \in \mathcal W_k +  [\mathcal W, \mathcal W_k]=\mathcal W_{k+1},$$
and this proves our claim. 
 
\item $\mathcal W_k \subset \linearspan_{C^\infty(M)} (\mathcal{V}_k)$: This can be proven by induction. For $k = 1$, the statement is trivial. For the inductive step, let $X  \in \mathcal W_{k+1} = \mathcal W_k + \left[ \mathcal W, \mathcal W_k \right]$. By the inductive assumption, $\mathcal W_k \subset \linearspan_{C^\infty(M)} (\mathcal{V}_k)$. Therefore, with $X_i \in \mathcal{V}_k$, $Y_i \in \mathcal{V}, Z_j \in \mathcal{V}_k$, we can write
\begin{equation*}\begin{split}
X 	&= \sum u_i X_i + \left[ \sum v_i Y_i , \sum w_j Z_j \right] \\
	&= \sum u_i X_i + \sum \sum v_i w_j \left[ Y_i, Z_j \right] + v_i (Y_i w_j) Z_j - w_j (Z_j v_i) Y_i \\
	 & \in  \linearspan_{C^\infty(M)} (\mathcal{V}_{k+1}). \qedhere
\end{split}\end{equation*}
\end{enumerate}
\end{proof}

\begin{lemma} \label{lem:bracketgeneratinglocalr}
If $H$ is a bracket generating distribution, then locally, there is an $r \in \mathbb{N}$ such that $H_r=TM$.
\end{lemma}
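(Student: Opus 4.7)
The plan is to fix $x \in M$, use the bracket generating hypothesis to produce finitely many iterated brackets of horizontal vectorfields whose values at $x$ span $T_xM$, then show that each such iterated bracket of length $\ell$ lies in the distribution $H_\ell$, and finally extend the spanning property to a neighbourhood by continuity of determinants.

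At $x$, the bracket generating hypothesis ensures that $\mathcal{L}(\mathfrak{X}_{loc,H}(M))$ spans $T_xM$, so there exist bracket expressions $B_1, \dots, B_n$ of local horizontal vectorfields with $B_1(x), \dots, B_n(x)$ forming a basis of $T_xM$. Each $B_i$ involves only finitely many horizontal vectorfields defined near $x$; intersecting domains yields a common open neighbourhood $U$ of $x$ on which every $B_i$ is defined. Let $\ell_i$ denote the length of $B_i$ (the number of constituent horizontal vectorfields, counted with multiplicity) and put $r = \max_i \ell_i$.

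Two elementary combinatorial facts now do the work. First, a direct induction on $\ell$ using the definition $\mathcal{V}_{k+1} = \mathcal{V}_k + [\mathcal{V}, \mathcal{V}_k]$ shows that every right-nested bracket $[X_1, [X_2, [\cdots, [X_{\ell-1}, X_\ell]] \cdots]]$ of horizontal vectorfields lies in $\mathfrak{X}_{loc,H}(U)_\ell$. Second, an induction on the pair (length, depth of left nesting) with lexicographic order, driven by the Jacobi identity $[[A,B],C] = [A,[B,C]] - [B,[A,C]]$, shows that an arbitrary bracket expression of length $\ell$ is a finite integer linear combination of right-nested brackets of the same length $\ell$. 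Combining these two observations gives $B_i(y) \in (H_{\ell_i})_y \subset (H_r)_y$ for all $y \in U$; here I also use that $H_k \subset H_{k+1}$, which follows from the fact that the zero vectorfield is horizontal and therefore lies in every $\mathcal{V}_k$, whence $X = X + [0,0] \in \mathcal{V}_k + [\mathcal{V}, \mathcal{V}_k] = \mathcal{V}_{k+1}$ for each $X \in \mathcal{V}_k$.

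To conclude, the linear independence of $B_1(x), \dots, B_n(x)$ at $x$ means that some $n \times n$ minor of their coordinate expressions is non-zero at $x$. That minor is a continuous function of $y$, so it remains non-zero on a possibly smaller open neighbourhood $U' \subset U$ of $x$. On $U'$ the vectors $B_i(y)$ span $T_yM$, and since each $B_i(y)$ lies in the vector space $(H_r)_y$, we obtain $(H_r)_y = T_yM$, as required. The main obstacle is the second combinatorial claim: the outer bracket $[[A,B],C]$ already has the desired total length, so Jacobi cannot decrease $\ell$; instead the induction must chip away at the depth of left nesting while keeping $\ell$ fixed, which is why a lexicographic induction on (length, left-nesting depth) is needed rather than a single induction on length.
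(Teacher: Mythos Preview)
Your proof is correct and follows essentially the same route as the paper: pick finitely many iterated brackets whose values at $x$ span $T_xM$, observe that each lies in some $\mathfrak X_{loc,H}(M)_{\ell_i}$, take $r=\max_i \ell_i$, and extend to a neighbourhood by continuity of linear independence. The only difference is that the paper invokes the identity $\mathcal L(\mathfrak X_{loc,H}(M))=\sum_k \mathfrak X_{loc,H}(M)_k$ as a known fact, whereas you unpack the relevant inclusion explicitly via the Jacobi-identity rewriting of arbitrary brackets into right-nested ones; your version is therefore more self-contained but otherwise identical in strategy.
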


\begin{proof}
Let $n$ be the dimension of $M$. For $x \in M$, we find vectorfields $X_1, \dots, X_n \in \mathcal{L}(\mathfrak X_{loc,H}(M))$ defined near $x$ such that $X_1(x), \dots, X_n(x)$ form a basis of $T_{x}M$. Because $X_1, \dots, X_n$ are linearly independent at $x$, they are also linearly independent in some neighbourhood of $x$. Furthermore, because 
$$\mathcal{L}(\mathfrak X_{loc,H}(M)) = \sum_{k \in \mathbb{N}} \mathfrak X_{loc,H}(M)_k,$$ every $X_i$ lies in some $\mathfrak X_{loc,H}(M)_{k_i}$. Therefore, they all lie in $\mathfrak X_{loc,H}(M)_r$, where $r=\max \{ k_i : i = 1, \dots , n \}$. Consequently, $H_r=TM$ near $x$. 
\end{proof}

\begin{definition} \index{growth vector}
For a bracket generating distribution such that $H_r=TM$, we define the growth vector $n(x) \in \mathbb{R}^r$ at $x \in M$ to be the vector whose components $n_i(x)$ are the rank of the distribution $H_i$ at $x$. (Obviously $n(x)$ may vary from point to point.)
\end{definition}

The following theorem and its proof is taken from a paper of \citet{Michor}; it describes the correspondence between brackets of flows and brackets of vectorfields.

\begin{definition} \index{bracket ! of local diffeomorphisms}
For curves of local diffeomorphisms $\varphi$ and $\psi$, we define the bracket of local diffeomorphisms
\begin{displaymath}
\left[ \varphi, \psi \right](t,x) = \left( \psi_t^{-1} \circ \varphi_t^{-1} \circ \psi_t \circ \phi_t \right) (x)
\end{displaymath}
for all $(t,x)$ where the equation is well-defined. As for Lie brackets of vectorfields, we extend this definition in the obvious way to arbitrary formal bracket expressions. 
\end{definition}

\begin{theorem}\label{michorstheorem}
For $i = 1 \dots k$, let $\varphi^i$ be curves of local diffeomorphisms through the identity. Let $X_i$ be their infinitesimal generators. Then for each formal bracket expression $B$ of length $k$ we have 
\begin{eqnarray*}
0 & = & \evaluate{\frac{\partial^l}{\partial t^l}}{0} B(\varphi^1_t, \dots , \varphi^k_t)\quad \text{for}\ 1 \leq l < k, \\
B(X_1, \dots , X_k) & = & \frac{1}{k!} \evaluate{\frac{\partial^k}{\partial t^k}}{0} B(\varphi^1_t, \dots , \varphi^k_t).
\end{eqnarray*}
\end{theorem}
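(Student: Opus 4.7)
The plan is induction on the length $k$ of the formal bracket expression $B$. For $k=1$, $B(\varphi^1_t) = \varphi^1_t$ and the statement reduces to $\partial_t|_0 \varphi^1_t = X_1$, which is the very definition of the infinitesimal generator. For the inductive step, decompose $B = [B_1, B_2]$ with $B_i$ a bracket expression of length $k_i$ and $k_1 + k_2 = k$, and set $\Phi_t := B_1(\varphi^1_t, \dots, \varphi^{k_1}_t)$, $\Psi_t := B_2(\varphi^{k_1+1}_t, \dots, \varphi^k_t)$, as well as $X := B_1(X_1, \dots, X_{k_1})$ and $Y := B_2(X_{k_1+1}, \dots, X_k)$, so that $B(\varphi^1_t, \dots, \varphi^k_t) = [\Phi,\Psi]_t$ and $B(X_1, \dots, X_k) = [X,Y]$. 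The induction hypothesis, tested against any $g \in C^\infty(M)$, produces the Taylor expansions
$$g \circ \Phi_t = g + t^{k_1}(Xg) + O(t^{k_1+1}), \qquad g \circ \Psi_t = g + t^{k_2}(Yg) + O(t^{k_2+1}).$$
What remains is a key lemma: whenever two curves of local diffeomorphisms through the identity have such Taylor expansions of orders $p$ and $q$ with vectorfields $X$ and $Y$, then $g \circ [\Phi,\Psi]_t = g + t^{p+q}[X,Y]g + O(t^{p+q+1})$.

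To prove the key lemma, I would pass to pullback operators $A(t) := \Phi_t^*$ and $C(s) := \Psi_s^*$, acting on germs of smooth functions by $A(t)g = g \circ \Phi_t$, and introduce the bivariate map $F(t,s) := \Psi_s^{-1} \circ \Phi_t^{-1} \circ \Psi_s \circ \Phi_t$, which satisfies $[\Phi,\Psi]_t = F(t,t)$ and $F(t,s)^* = A(t)\,C(s)\,A(t)^{-1}\,C(s)^{-1}$ by contravariance of pullback. The induction hypothesis is now encoded as $A(t) = I + t^p X + O(t^{p+1})$ and $C(s) = I + s^q Y + O(s^{q+1})$, with $X, Y$ acting as first-order differential operators; the relation $A A^{-1} = I$ forces $A(t)^{-1} = I - t^p X + O(t^{p+1})$, and similarly for $C$. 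In the bivariate Taylor expansion of the product, any bihomogeneous term $t^a s^b$ with $a < p$ vanishes: at that order both $A$ and $A^{-1}$ reduce to $I$, so the product collapses to $C(s)\,C(s)^{-1} = I$, which contributes nothing; symmetrically for $b < q$. The lowest bihomogeneous term that can survive is $t^p s^q$, and it is produced only by the four bilinear crossings of a single factor from $\{A - I,\; A^{-1} - I\}$ with a single factor from $\{C - I,\; C^{-1} - I\}$; summing the leading coefficients yields
$$XY \;-\; XY \;-\; YX \;+\; XY \;=\; XY - YX \;=\; [X,Y].$$

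Restricting to the diagonal $s = t$ collects bihomogeneous terms with $a + b = p + q$; combined with the inequalities $a \geq p$ and $b \geq q$ this forces $a = p$, $b = q$, so $g \circ F(t,t) = g + t^{p+q}[X,Y]g + O(t^{p+q+1})$. Reading off Taylor coefficients gives $\partial_t^l|_0(g \circ [\Phi,\Psi]_t) = 0$ for $0 < l < k$ and $\frac{1}{k!}\partial_t^k|_0(g \circ [\Phi,\Psi]_t) = [X,Y]g = B(X_1,\dots,X_k)g$, which closes the induction since $g$ is arbitrary. The main obstacle is making the bivariate formal power series manipulation rigorous: one must confirm that the truncated Taylor polynomials of $A, A^{-1}, C, C^{-1}$ compose correctly modulo remainders of the appropriate order, which ultimately rests on the fact that $(t,s) \mapsto \bigl(A(t)\,C(s)\,A(t)^{-1}\,C(s)^{-1}\,g\bigr)(x)$ is a smooth real-valued function near $(0,0) \in \mathbb{R}^2$ admitting the standard multivariate Taylor formula with remainder.
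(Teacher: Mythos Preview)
Your proposal is correct and follows the same inductive architecture as the paper: reduce to a key lemma asserting that if $\Phi_t$ and $\Psi_t$ have first non-vanishing derivatives $p!\,X$ and $q!\,Y$, then $[\Phi,\Psi]_t$ has first non-vanishing derivative $(p+q)!\,[X,Y]$; this is precisely the paper's Lemma~\ref{lem:michor4}, and the inversion step $A(t)^{-1}=I-t^pX+O(t^{p+1})$ is its Lemma~\ref{lem:michor3}. The only stylistic difference lies in the bookkeeping for the key lemma: the paper expands $\partial_t^N|_0(\Psi_t^{-1}\circ\Phi_t^{-1}\circ\Psi_t\circ\Phi_t)^*f$ via an explicit multinomial identity (Lemma~\ref{lem:michor2}) and then works through the cases $N<n$, $N=n$, $n<N\leq m$, $m<N\leq m+n$ by hand, whereas you separate the two time variables, argue by bidegree that all coefficients $t^as^b$ with $a<p$ or $b<q$ vanish, and then read off the single surviving $t^ps^q$ term on the diagonal. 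Your route is a bit cleaner conceptually; the paper's route has the advantage that the multinomial lemma makes the ``main obstacle'' you flag at the end completely explicit, so no informal $O(\cdot)$ manipulations of operator-valued Taylor remainders are needed.
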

In the above theorem, $B(X_1, \dots , X_k)$ clearly is a vectorfield, but what about the $k$-th derivative of $B(\varphi^1_t, \dots , \varphi^k_t)$? To explain this, we need the following lemma: 

\begin{lemma}\label{lem:michor1}
Let $c : \mathbb{R} \rightarrow M$ be a smooth curve. If $c(0) = x \in M$, $c'(0) = 0, \dots, c^{(k-1)}(0)=0$, then $c^{(k)}(0)$ is a well-defined tangent vector in $T_{x}M$ which is given by the derivation $f \mapsto (f \circ c)^{(k)}(0)$. 
\end{lemma}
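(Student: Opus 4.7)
The plan is to work in a chart and verify intrinsicness via the standard identification of tangent vectors with derivations on germs of smooth functions at $x$ (which the paper has already adopted, cf.\ Lemma \ref{lem:frelated2}). Fix a chart $(U,u)$ centered at $x=c(0)$ and set $\bar c = u \circ c$. The inductive interpretation of the hypotheses $c^{(j)}(0)=0$ for $1 \le j < k$ is that in the chart one has $\bar c^{(j)}(0)=0$ for $1 \le j < k$; granting this, $\bar c^{(k)}(0) \in \mathbb R^n$ is an honest vector in that chart. I will show that this vector, regarded as an element of $T_x M$ via the chart, acts on a germ $f$ precisely as $f \mapsto (f \circ c)^{(k)}(0)$. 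Since the right-hand side is manifestly chart-independent, this forces the chart-defined vector to transform correctly under chart changes, and hence defines an intrinsic tangent vector $c^{(k)}(0) \in T_x M$.

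The key computation is a first-order Taylor expansion of $\tilde f := f \circ u^{-1}$ at $0$, namely $\tilde f(y)=\tilde f(0)+\sum_i \partial_i \tilde f(0)\, y^i + R(y)$ with $R(y)=O(|y|^2)$. The vanishing of the lower derivatives of $\bar c$ yields $\bar c(t)=\tfrac{t^k}{k!}\bar c^{(k)}(0)+O(t^{k+1})$, so $|\bar c(t)|^2=O(t^{2k})$; differentiating $k$ times at $0$ therefore kills the remainder entirely, while the linear term contributes
\[
(f\circ c)^{(k)}(0)=\sum_{i=1}^n \partial_i \tilde f(0)\,\bar c^{(k)}(0)^i,
\]
which is exactly the action on $f$ of the chart-tangent-vector $\bar c^{(k)}(0)$. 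The same estimate $(f \circ c)(t)-f(x)=O(t^k)$ shows that $(f \circ c)^{(j)}(0)=0$ for $0<j<k$, a fact I shall need in a moment.

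To see that $T \colon f \mapsto (f\circ c)^{(k)}(0)$ is a derivation on $C^\infty_x$, linearity is immediate and Leibniz follows from the higher-order product rule
\[
((fg)\circ c)^{(k)}(0)=\sum_{j=0}^{k}\binom{k}{j}(f\circ c)^{(j)}(0)\,(g\circ c)^{(k-j)}(0).
\]
Applying the vanishing observation above to $f$ and to $g$ separately, every intermediate term with $0<j<k$ is zero, so only the boundary terms survive and yield $f(x)(g\circ c)^{(k)}(0)+(f\circ c)^{(k)}(0)\,g(x)$. Hence $T$ is a derivation, and so corresponds to a unique tangent vector in $T_x M$, which is by definition $c^{(k)}(0)$ and agrees with the chart-computed vector $\bar c^{(k)}(0)$.

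The main subtlety is not computational but conceptual: one must be careful about the inductive coherence of the hypothesis $c^{(j)}(0)=0$ for $j<k$, since each of those tangent vectors is itself produced by a prior application of this lemma. What makes the induction go through cleanly is that in any chart centered at $x$ the statement $c^{(j)}(0)=0$ as a tangent vector is equivalent to $\bar c^{(j)}(0)=0$ as an element of $\mathbb R^n$ (by the characterization of $c^{(j)}(0)$ via the derivation $f \mapsto (f\circ c)^{(j)}(0)$ applied to the coordinate functions). Once this equivalence is established at each step, the Taylor estimate $\bar c(t)=O(t^k)$ is available and both parts of the proof go through without further obstacle.
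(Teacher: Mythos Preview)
Your proof is correct and its core is exactly the paper's argument: verify the Leibniz rule for $f \mapsto (f\circ c)^{(k)}(0)$ via the higher-order product rule, using that all intermediate terms $(f\circ c)^{(j)}(0)$ with $0<j<k$ vanish. The chart computation identifying the derivation with $\bar c^{(k)}(0)$ and the discussion of the inductive meaning of the hypotheses are useful elaborations the paper leaves implicit, but the approach is the same.
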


\begin{proof}
We have to show that the Leibnitz rule holds for the map $f \mapsto (f \circ c)^{(k)}(0)$.
\begin{eqnarray*}
\left( \left(f \cdot g \right) \circ c \right)^{(k)} (0) & = &
\left( \left(f \circ c \right) \cdot \left( g \circ c \right) \right) ^ {(k)} =\\ 
& = & \sum_{j=0}^{k} \binom{k}{j} \left( f \circ c \right)^{(j)}(0) \cdot \left( g \circ c \right)^{(k-j)}(0) = \\
& = & \left( f \circ c\right)^{(k)}(0) \ g(q) + f(q) \ \left( g \circ c\right)^{(k)} (0)
\end{eqnarray*}
since all other summands vanish: $\left( f \circ c \right)^{(j)}(0) = 0$ for $j = 1, \dots , k-1$. 
\end{proof}

Lemmas \ref{lem:michor2}, \ref{lem:michor3} and \ref{lem:michor4} constitute the proof of Theorem \ref{michorstheorem}. In the following, we will use the notation $\partial^k_t = \frac{\partial^k}{\partial t^k}$.

\begin{lemma}\label{lem:michor2}
Let $\varphi, \psi$ be curves of local diffeomorphisms through $\Id_M$ and let $f \in C^{\infty}(M)$. Then we have 
$$\evaluate{\partial^k_t}{0}(\varphi_t\circ \psi_t)^*f=\sum_{j=0}^{k} \binom{k}{j}(\evaluate{\partial^j_t}{0}\psi_t^*)(\evaluate{\partial^{k-j}_t}{0} \varphi_t^*)f.$$
For $l$ curves of local diffeormphisms $\varphi^1,\dots \varphi^l$ through $\Id_M$ the multinomial version of this formula also holds:
$$\evaluate{\partial^k_t}{0}(\varphi^1_t\circ\dots\circ \varphi^l_t)^*f=\sum_{j_1+\dots +j_l=k}\frac{k!}{j_1!\dots j_l!}(\evaluate{\partial^{j_l}_t}{0}(\varphi^l_t)^*)\dots(\evaluate{\partial^{j_1}_t}{0}(\varphi^1_t)^*)f.$$
\end{lemma}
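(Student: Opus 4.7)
The plan is to fix an arbitrary point $x \in M$ and reduce the identity to a statement about an ordinary smooth real-valued function of several real variables, so that the Leibniz-style formula becomes the multivariable chain rule applied along the diagonal.

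First I would introduce two auxiliary parameters and set
\begin{equation*}
F(s,t) := f\bigl(\varphi_s(\psi_t(x))\bigr) = \bigl(\psi_t^* \varphi_s^* f\bigr)(x),
\end{equation*}
which is smooth on a neighbourhood of $(0,0) \in \mathbb R^2$ because $\varphi$, $\psi$ and $f$ are smooth. Using the computation $(\varphi_t \circ \psi_t)^* f = f \circ \varphi_t \circ \psi_t = \psi_t^*(\varphi_t^* f)$, we have $(\varphi_t \circ \psi_t)^* f(x) = F(t,t)$, and the multivariable chain rule applied iteratively gives
\begin{equation*}
\evaluate{\frac{d^k}{dt^k}}{0} F(t,t) = \evaluate{(\partial_s + \partial_t)^k F}{(0,0)} = \sum_{j=0}^k \binom{k}{j} \evaluate{\partial_s^{k-j}\partial_t^{j} F}{(0,0)}.
\end{equation*}

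Next I would identify each mixed partial with the corresponding composition of pullback operators. Since $s$ and $t$ are independent, the partials commute, and for fixed $s_0$ near $0$ we have $\partial_t^{j}|_{t=0} F(s_0, t) = \bigl(\evaluate{\partial_t^{j}}{0}\psi_t^*\bigr)(\varphi_{s_0}^* f)(x)$ (the $j$-th $t$-derivative acts only on the factor $\psi_t^*$ in the representation of $F$, and yields a well-defined differential operator applied to the smooth function $\varphi_{s_0}^* f$ thanks to Lemma \ref{lem:michor1}). Applying $\partial_s^{k-j}|_{s=0}$ then pulls out the operator $\evaluate{\partial_s^{k-j}}{0}\varphi_s^*$, giving $\bigl(\evaluate{\partial_t^{j}}{0}\psi_t^*\bigr)\bigl(\evaluate{\partial_t^{k-j}}{0}\varphi_t^*\bigr) f$ evaluated at $x$. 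Since $x$ was arbitrary, this establishes the binomial formula.

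For the multinomial version I would repeat the same idea with $l$ independent parameters: put
\begin{equation*}
G(t_1,\dots,t_l) := f\bigl(\varphi^1_{t_1}\circ\cdots\circ\varphi^l_{t_l}(x)\bigr) = \bigl((\varphi^l_{t_l})^*\cdots(\varphi^1_{t_1})^* f\bigr)(x),
\end{equation*}
which is smooth near the origin in $\mathbb R^l$ and satisfies $(\varphi^1_t\circ\cdots\circ\varphi^l_t)^* f(x) = G(t,\dots,t)$. The chain rule yields $\evaluate{\frac{d^k}{dt^k}}{0} G(t,\dots,t) = \evaluate{(\partial_{t_1}+\dots+\partial_{t_l})^k G}{0}$, and the multinomial theorem expands this as $\sum_{j_1+\dots+j_l=k}\frac{k!}{j_1!\dots j_l!}\evaluate{\partial_{t_1}^{j_1}\dots\partial_{t_l}^{j_l} G}{0}$. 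Each $\partial_{t_i}$ acts only on the factor $(\varphi^i_{t_i})^*$, and the partials commute, so the mixed partial evaluated at the origin is precisely the composition of operators $\bigl(\evaluate{\partial_t^{j_l}}{0}(\varphi^l_t)^*\bigr)\dots\bigl(\evaluate{\partial_t^{j_1}}{0}(\varphi^1_t)^*\bigr) f$ applied at $x$, which is the right-hand side of the claimed identity. Alternatively, one can prove the multinomial version by induction on $l$, using the binomial version as the base case.

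The main bookkeeping obstacle is the non-commutativity of the pullback operators: under the reversal $(\varphi_t\circ\psi_t)^* = \psi_t^*\varphi_t^*$, the factor with the higher index ends up outermost, so one must verify that in the multinomial expansion the index $j_i$ is indeed paired with $(\varphi^i_{t_i})^*$ in the stated order. Decoupling the $t$-variables into independent $t_1,\dots,t_l$ before collapsing to the diagonal makes this verification immediate, since each partial $\partial_{t_i}$ only sees its corresponding factor, and the chain rule then delivers the correct ordering automatically.
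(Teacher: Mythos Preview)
Your proof is correct and follows essentially the same strategy as the paper: decouple the single parameter $t$ into independent parameters (one per curve of diffeomorphisms), then expand the $k$-th derivative along the diagonal via the binomial/multinomial formula and identify each mixed partial with the corresponding composition of pullback operators. The only minor difference is that the paper justifies the diagonal identity $\partial_t^k h(t,t) = \sum_j \binom{k}{j}\,\partial_t^j\partial_s^{k-j}h|_{t=s}$ by a density argument (decomposable tensors $f(t)g(s)$ are dense in the compact $C^\infty$-topology), whereas you obtain it directly from the iterated chain rule $(\partial_s+\partial_t)^k$, which is a bit more elementary.
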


\begin{proof}
We will only show the binomial version. For a function $h(t,s)$ of two variables we have
\[\partial^k_t h(t,t)=\sum_{j=0}^{k} \evaluate{\binom{k}{j}\partial^j_t\partial^{k-j}_s h(t,s)}{t=s}\]
since for $h(t,s)=f(t)g(s)$ this is just a consequence of the Leibnitz rule, and linear combinations of such decomposable tensors are dense in the space of all functions of two variables in the compact $C^\infty$-topology, so that by continuity the formula holds for all functions. In the following form it implies the lemma:
\begin{equation*} 
\evaluate{\partial^k_t}{0}f(\varphi(t,\psi(t,x)))=\sum_{j=0}^{k}\binom{k}{j}\evaluate{\partial^{k-j}_s \partial^j_t f(\varphi(t,\psi(s,x)))}{t=s=0}. \qedhere
\end{equation*}
\end{proof}

\begin{lemma}\label{lem:michor3}
Let $\varphi$ be a curve of local diffeormphisms through $\Id_M$ with first non-vanishing derivative 
$$ k!\ X= \evaluate{\partial^k_t}{0}\varphi_t.$$
Then the inverse curve of local diffeomorphims $\varphi^{-1}$ has first non-vanishing derivative 
$$-k!\ X= \evaluate{\partial^k_t}{0}\varphi^{-1}_t.$$
\end{lemma}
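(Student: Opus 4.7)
The plan is to differentiate the pullback identity $(\varphi_t^{-1})^*\varphi_t^* f = f$ (coming from $\varphi_t\circ\varphi_t^{-1}=\Id_M$) and read off the Taylor coefficients of $(\varphi_t^{-1})^*$ one order at a time, using Lemma~\ref{lem:michor2} to disentangle the two factors. First I translate the hypothesis into pullback language: by Lemma~\ref{lem:michor1}, the assumption that $k!\,X=\evaluate{\partial^k_t}{0}\varphi_t$ is the first non-vanishing derivative of $\varphi$ amounts to saying that, for every $f\in C^\infty(M)$,
\[
\evaluate{\partial^j_t}{0}\varphi_t^* f = 0 \text{ for } 1\leq j\leq k-1, \qquad \evaluate{\partial^k_t}{0}\varphi_t^* f = k!\,Xf.
\]

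Next I apply the binomial version of Lemma~\ref{lem:michor2} to the identity $(\varphi_t\circ\varphi_t^{-1})^* f = f$, obtaining, for each $j\geq 1$,
\[
0 = \sum_{i=0}^{j}\binom{j}{i}\bigl(\evaluate{\partial^i_t}{0}(\varphi_t^{-1})^*\bigr)\bigl(\evaluate{\partial^{j-i}_t}{0}\varphi_t^*\bigr) f.
\]
I then induct on $j$ from $1$ up to $k$. For $1\leq j\leq k-1$, assuming by induction that $\evaluate{\partial^i_t}{0}\varphi_t^{-1}=0$ for all $1\leq i\leq j-1$, every summand with $i<j$ vanishes because $1\leq j-i\leq k-1$ forces the inner pullback derivative $\evaluate{\partial^{j-i}_t}{0}\varphi_t^* f$ to be zero; only the $i=j$ summand survives, yielding $\evaluate{\partial^j_t}{0}(\varphi_t^{-1})^* f=0$ for every $f$. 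The inductive hypothesis makes Lemma~\ref{lem:michor1} applicable to the curve $t\mapsto \varphi_t^{-1}(x)$, so $\evaluate{\partial^j_t}{0}\varphi_t^{-1}(x)$ is a well-defined tangent vector whose action on every $f$ vanishes; hence the corresponding vectorfield is zero, and the induction proceeds.

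At $j=k$ the same identity has only the two boundary summands contributing, since every term with $0<i<k$ vanishes by the same argument: the $i=0$ term equals $k!\,Xf$ and the $i=k$ term equals $\evaluate{\partial^k_t}{0}(\varphi_t^{-1})^* f$. Therefore $\evaluate{\partial^k_t}{0}(\varphi_t^{-1})^* f = -k!\,Xf$, and Lemma~\ref{lem:michor1}, which now applies because the lower-order derivatives of $\varphi^{-1}$ at $0$ have just been shown to vanish, identifies this data with the vectorfield $-k!\,X$. The only real obstacle is the inductive bookkeeping required to ensure that Lemma~\ref{lem:michor1} can be legitimately invoked at each new order; once that is set up, the algebraic cancellations in the binomial formula are automatic.
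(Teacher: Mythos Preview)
Your proof is correct and follows essentially the same route as the paper: differentiate the identity $\varphi_t^{-1}\circ\varphi_t=\Id_M$ (the paper uses this order, you use $\varphi_t\circ\varphi_t^{-1}$, which merely swaps which factor sits inside in the binomial expansion of Lemma~\ref{lem:michor2}) and observe that for $1\leq j\leq k$ all cross terms die because the relevant derivative of $\varphi_t^*$ vanishes by hypothesis, leaving only the two boundary summands. Your inductive framing is not actually needed for the algebra---the hypothesis on $\varphi$ alone already kills every middle term---so the only work the induction does is the bookkeeping you mention for Lemma~\ref{lem:michor1}; the paper simply suppresses that and reads off $\evaluate{\partial^j_t}{0}(\varphi_t^{-1})^*f=-\evaluate{\partial^j_t}{0}\varphi_t^*f$ for all $1\leq j\leq k$ in one line.
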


\begin{proof}
We have $\varphi^{-1}_t \circ \varphi_t=\Id_M$, so by Lemma \ref{lem:michor2} we get for $1\leq j\leq k$
\begin{eqnarray*}
0&=&\evaluate{\partial^j_t}{0}(\varphi^{-1}_t \circ \varphi_t)^*f = \sum_{i=0}^j\binom{i}{j} (\evaluate{\partial^i_t}{0}(\varphi_t)^*)(\evaluate{\partial^{j-i}_t}{0}(\varphi^{-1}_t)^*)f \\
&=&\evaluate{\partial^j_t}{0}(\varphi_t)^*(\varphi^{-1}_0)^*f+(\varphi_0)^* \evaluate{\partial^j_t}{0}(\varphi^{-1}_t)^*f \\
&=&\evaluate{\partial^j_t}{0}(\varphi_t)^*f+\evaluate{\partial^j_t}{0}(\varphi^{-1}_t)^*f
\end{eqnarray*}
as required.
\end{proof}

\begin{lemma}\label{lem:michor4}
Let $\varphi$ be a curve of local diffeormphisms through $\Id_M$ with first non-vanishing derivative $m!\ X= \evaluate{\partial^m_t}{0}\varphi_t$, and let $\psi$ be a curve of local diffeormphisms through $\Id_M$ with first non-vanishing derivative $n!\ Y= \evaluate{\partial^n_t}{0}\psi_t$.
Then the curve of local diffeomorphisms $[\varphi,\psi]$ has first non-vanishing derivative
$$(m+n)!\ [X,Y]=\evaluate{\partial^{m+n}_t}{0}[\varphi,\psi]_t.$$
\end{lemma}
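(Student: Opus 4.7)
My plan is to apply the multinomial form of Lemma \ref{lem:michor2} to the pullback
\[
([\varphi,\psi]_t)^* f \,=\, \varphi_t^* \circ \psi_t^* \circ (\varphi_t^{-1})^* \circ (\psi_t^{-1})^* f
\]
and to read off the Taylor coefficients in $t$ order by order, using Lemma \ref{lem:michor3} to handle $(\varphi_t^{-1})^*$ and $(\psi_t^{-1})^*$. Write $A_p := \evaluate{\partial^p_t}{0} \varphi_t^*$ and $A'_p := \evaluate{\partial^p_t}{0} (\varphi_t^{-1})^*$, and define $B_p, B'_p$ analogously for $\psi$. By hypothesis together with Lemma \ref{lem:michor1}, $A_p = 0$ for $0 < p < m$ and $A_m = m!\,\mathcal L_X$, and by Lemma \ref{lem:michor3} the same vanishing holds for $A'_p$ with $A'_m = -m!\,\mathcal L_X$; the analogous statements with $n$ and $Y$ hold for $B_p$ and $B'_p$. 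Lemma \ref{lem:michor2} expresses
\[
\evaluate{\partial^k_t}{0} ([\varphi,\psi]_t)^* f \,=\, \sum_{a+b+c+d=k} \frac{k!}{a!\,b!\,c!\,d!}\, A_d B_c A'_b B'_a f,
\]
and a multi-index $(a,b,c,d)$ can contribute only if every non-zero entry among $a,c$ is at least $n$ and every non-zero entry among $b,d$ is at least $m$.

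I would then partition the surviving terms into pure-$\varphi$ (those with $a = c = 0$), pure-$\psi$ (those with $b = d = 0$), and mixed contributions. Applying Lemma \ref{lem:michor2} to the trivial compositions $\varphi_t^{-1} \circ \varphi_t = \Id_M$ and $\psi_t^{-1} \circ \psi_t = \Id_M$ gives, for every $k \geq 1$,
\[
\sum_{b+d=k} \frac{k!}{b!\,d!}\, A_d A'_b \,=\, 0 \quad\text{and}\quad \sum_{a+c=k} \frac{k!}{a!\,c!}\, B_c B'_a \,=\, 0,
\]
so the pure-$\varphi$ and the pure-$\psi$ contributions to $\evaluate{\partial^k_t}{0} ([\varphi,\psi]_t)^* f$ cancel at every positive order. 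For a mixed term a non-zero $\varphi$-index and a non-zero $\psi$-index together contribute at least $m+n$, so no mixed term exists for $k < m+n$. This establishes the vanishing of the first $m+n-1$ derivatives.

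At $k = m+n$ the mixed multi-indices are forced to be one of $(n,m,0,0)$, $(n,0,0,m)$, $(0,m,n,0)$, $(0,0,n,m)$: raising any entry above its lower bound would overshoot the total once combined with the mandatory non-zero entry on the other side, and having two non-zero entries of the same type would contribute at least $2m$ or $2n$ there and again overshoot. Each of the four surviving terms carries the common coefficient $(m+n)!/(m!\,n!)$, and evaluating using $A_m = m!\,\mathcal L_X$, $A'_m = -m!\,\mathcal L_X$, $B_n = n!\,\mathcal L_Y$, $B'_n = -n!\,\mathcal L_Y$ yields the sum
\[
(m+n)!\,\bigl(\mathcal L_X\mathcal L_Y \,-\, \mathcal L_X\mathcal L_Y \,-\, \mathcal L_Y\mathcal L_X \,+\, \mathcal L_X\mathcal L_Y\bigr) f \,=\, (m+n)!\,\mathcal L_{[X,Y]} f,
\]
where the four summands correspond in order to the four multi-indices listed. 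Since all lower derivatives of $t \mapsto [\varphi,\psi]_t(x)$ vanish by the previous paragraph, Lemma \ref{lem:michor1} identifies $\evaluate{\partial^{m+n}_t}{0}[\varphi,\psi]_t(x)$ as the tangent vector at $x$ acting on $f$ by $(m+n)!\,[X,Y](x)(f)$.

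The main obstacle is the combinatorial bookkeeping in the last step: one must enumerate the mixed multi-indices at order $m+n$ carefully to see that only the four simple ones survive, and track the minus signs from Lemma \ref{lem:michor3} so that the four contributions telescope to the commutator $\mathcal L_X \mathcal L_Y - \mathcal L_Y \mathcal L_X$. Together with the universal pure-type cancellation supplied by $\varphi_t^{-1}\circ \varphi_t = \Id_M$ and $\psi_t^{-1}\circ \psi_t = \Id_M$, this yields both the vanishing of the lower-order derivatives and the value $(m+n)!\,[X,Y]$ at order $m+n$.
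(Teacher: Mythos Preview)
Your argument is correct. Both you and the paper expand $([\varphi,\psi]_t)^*f$ via the multinomial version of Lemma~\ref{lem:michor2} and feed in the vanishing/sign information from Lemma~\ref{lem:michor3}, so the underlying machinery is identical. The difference is purely organizational: the paper assumes $n\le m$ without loss of generality and works through the ranges $N<n$, $N=n$, $n<N\le m$, and $m<N\le m+n$ one at a time, peeling off the outermost $\varphi_t^*$ factor in the last case and doing an intermediate computation of $\partial_t^N|_0(\psi_t^{-1}\circ\varphi_t^{-1}\circ\psi_t)^*f$. Your pure-$\varphi$/pure-$\psi$/mixed trichotomy handles all orders at once: the identities $\varphi_t^{-1}\circ\varphi_t=\Id_M$ and $\psi_t^{-1}\circ\psi_t=\Id_M$ kill the two pure families uniformly, and the mixed family is empty below order $m+n$ for the degree reason you give. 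This buys you a symmetric argument with no case split on the relative size of $m$ and $n$, and the final enumeration of the four surviving multi-indices at order $m+n$ is exactly the combinatorial content hidden inside the paper's last case.
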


\begin{proof}
By the multinomial version of Lemma \ref{lem:michor2} we have
\begin{eqnarray*}
A_Nf &:=& \evaluate{\partial^N_t}{0}(\psi^{-1}_t\circ\varphi^{-1}_t\circ \psi_t\circ\varphi_t)^*f \\
&=& \sum_{i+j+k+l=N} \frac{N!}{i!j!k!l!} (\evaluate{\partial^i_t}{0}(\varphi^{-1}_t)^*) (\evaluate{\partial^j_t}{0}(\psi^{-1}_t)^*) (\evaluate{\partial^k_t}{0}(\varphi_t)^*) (\evaluate{\partial^l_t}{0}(\psi_t)^*)f.
\end{eqnarray*}
Let us suppose that $1\leq n\leq m$, the case $m\leq n$ is similar. If $N<n$ all summands vanish. If $N=n$ we have by Lemma \ref{lem:michor3}
\begin{eqnarray*}
A_Nf= (\evaluate{\partial^n_t}{0}(\varphi_t)^*)f+(\evaluate{\partial^n_t}{0}(\psi_t)^*)f+(\evaluate{\partial^k_t}{0}(\varphi^{-1}_t)^*)f+(\evaluate{\partial^k_t}{0}(\psi^{-1}_t)^*)f=0.
\end{eqnarray*}
If $n<N\leq m$ we have, using again Lemma \ref{lem:michor3}
\begin{eqnarray*}
A_Nf &=& \sum_{j+l=N}\frac{N!}{j!l!}(\evaluate{\partial^j_t}{0}(\psi_t)^*)(\evaluate{\partial^k_t}{0}(\psi^{-1}_t)^*)f + \delta^m_N(\evaluate{\partial^m_t}{0}(\varphi_t)^*)f+\evaluate{\partial^m_t}{0}(\varphi^{-1}_t)^*)f)\\
&=& (\evaluate{\partial^N_t}{0}(\psi^{-1}_t\circ\psi_t)^*)f+0=0.
\end{eqnarray*}
Now we come to the difficult case $m,n<N\leq m+n.$ By Lemma \ref{lem:michor2} and since all other terms vanish we get
\begin{eqnarray*}
A_Nf&=&\evaluate{\partial^N_t}{0}(\psi^{-1}_t\circ \varphi^{-1}_t\circ \psi_t)^*f \\
&+& \binom{N}{m}(\evaluate{\partial^m_t}{0}\varphi^*_t)(\evaluate{\partial^{N-m}_t}{0}(\psi^{-1}_t\circ \varphi^{-1}_t\circ \psi_t)^*)f+(\evaluate{\partial^N_t}{0}\varphi^*_t)f.
\end{eqnarray*}
By Lemma \ref{lem:michor2} again we get
\begin{equation*}\begin{split}
&\evaluate{\partial^N_t}{0}(\psi^{-1}_t\circ \varphi^{-1}_t\circ \psi_t)^*f=\\
& \quad = \sum_{j+k+l=N}\frac{N!}{j!k!l!}(\evaluate{\partial^j_t}{0}(\psi_t)^*)(\evaluate{\partial^k_t}{0}(\varphi^{-1}_t)^*)(\evaluate{\partial^l_t}{0}(\psi^{-1}_t)^*)f \\
& \quad =\sum_{j+l=N}\binom{N}{j}(\evaluate{\partial^j_t}{0}(\psi_t)^*)(\evaluate{\partial^l_t}{0}(\psi^{-1}_t)^*)f+\binom{N}{m}(\evaluate{\partial^{N-m}_t}{0}(\psi_t)^*)(\evaluate{\partial^m_t}{0}(\varphi^{-1}_t)^*)f+\\
&\quad\quad\quad+\binom{N}{m}(\evaluate{\partial^{m}_t}{0}(\varphi^{-1}_t)^*)(\evaluate{\partial^m_t}{0}(\psi^{-1}_t)^*)f + \evaluate{\partial^N_t}{0}(\varphi^{-1}_t)^*)f\\
& \quad =\binom{N}{m}(\evaluate{\partial^{N-m}_t}{0}(\psi_t)^*)m!\ (-X f)+\binom{N}{m}(\evaluate{\partial^{m}_t}{0}(\varphi^{-1}_t)^*)n!\ (-Y f) +\evaluate{\partial^N_t}{0}(\varphi^{-1}_t)^*f\\
& \quad =\delta^N_{m+n}(m+n)!\ (XY-YX)f+\evaluate{\partial^N_t}{0}(\varphi^{-1}_t)^*f\\
& \quad =\delta^N_{m+n}(m+n)!\ [X,Y]f+\evaluate{\partial^N_t}{0}(\varphi^{-1}_t)^*f.
\end{split}\end{equation*}
From the second expression above one can also read off that
\begin{equation*}
\evaluate{\partial^{N-m}_t}{0}(\psi^{-1}_t\circ \varphi^{-1}_t\circ \psi_t)^*f= \evaluate{\partial^{N-m}_t}{0}((\varphi^{-1}_t)^*f.
\end{equation*}
If we put the last conclusions together we get, using Lemma \ref{lem:michor2} and Lemma \ref{lem:michor3} again:
\begin{equation*}\begin{split}
A_N f &= \delta^N_{m+n}(m+n)!\ [X,Y]f+\evaluate{\partial^N_t}{0}(\varphi^{-1}_t)^*f\\
&\quad+\binom{N}{m}(\evaluate{\partial^m_t}{0}\varphi_t^*)(\evaluate{\partial^{N-m}_t}{0}(\varphi^{-1}_t)^*)f+(\evaluate{\partial^N_t}{0}\varphi_t^*)f\\
&=\delta^N_{m+n}(m+n)!\ [X,Y]f+(\evaluate{\partial^N_t}{0}(\varphi_t^{-1}\circ \varphi_t)^*f\\
&=\delta^N_{m+n}(m+n)!\ [X,Y]f. \qedhere
\end{split}\end{equation*}
\end{proof}

We will use the following two lemmas in our proof of Chow's Theorem.

\begin{lemma}  \label{ersteswurzellemma}
Let $\varphi: (-\epsilon, \epsilon) \rightarrow M$ be a smooth curve whose derivatives at zero of order up to and including $n-1$ are zero, i.e.
$\varphi'(0) = 0, \dots, \varphi^{(n-1)}(0) = 0$. Then $t \mapsto \varphi(\sqrt[n]{t}\,)$ is a $C^1$-function on $[0,\epsilon)$ whose first derivative at zero equals $\frac{1}{n!} \varphi^{(n)}(0)$. If $n$ is odd, it is a $C^1$-function on $(-\epsilon, \epsilon)$.
\end{lemma}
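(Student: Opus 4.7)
The plan is to reduce everything to a one-variable Taylor expansion in a chart. Since both the $C^1$ property and the asserted value $\tfrac{1}{n!}\varphi^{(n)}(0)$ of the derivative at $0$ are chart-independent statements (by Lemma \ref{lem:michor1} the vector $\varphi^{(n)}(0) \in T_{\varphi(0)}M$ is well defined), we may work in a coordinate chart around $\varphi(0)$ and treat $\varphi$ as a smooth curve in $\mathbb R^m$.

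Under the hypothesis $\varphi'(0)=\dots=\varphi^{(n-1)}(0)=0$, Taylor's formula with integral remainder applied in each coordinate yields
\begin{equation*}
\varphi(s) - \varphi(0) = \frac{s^n}{(n-1)!}\int_0^1 (1-u)^{n-1}\,\varphi^{(n)}(us)\,\ud u.
\end{equation*}
Setting
\begin{equation*}
\psi(s) := \frac{1}{(n-1)!}\int_0^1 (1-u)^{n-1}\,\varphi^{(n)}(us)\,\ud u,
\end{equation*}
differentiation under the integral sign shows that $\psi$ is a smooth function of $s \in (-\epsilon,\epsilon)$, with $\psi(0) = \frac{1}{n!}\varphi^{(n)}(0)$ and the identity $\varphi(s) - \varphi(0) = s^n\,\psi(s)$.

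Now I would substitute $s = \sqrt[n]{t}$, giving
\begin{equation*}
\varphi(\sqrt[n]{t}) \;=\; \varphi(0) + t \cdot \psi(\sqrt[n]{t}).
\end{equation*}
The map $t \mapsto \sqrt[n]{t}$ is continuous on $[0,\epsilon)$ (and on all of $(-\epsilon,\epsilon)$ if $n$ is odd), so $\psi(\sqrt[n]{t})$ is continuous there. Hence the right-hand side is a $C^0$-function, differentiable at $t=0$ with derivative $\psi(0) = \frac{1}{n!}\varphi^{(n)}(0)$, as the factor $t$ directly delivers the difference quotient. For $t \neq 0$, the chain rule yields
\begin{equation*}
\frac{\ud}{\ud t}\Bigl(t\,\psi(\sqrt[n]{t})\Bigr) \;=\; \psi(\sqrt[n]{t}) + \frac{t^{1/n}}{n}\,\psi'(\sqrt[n]{t}),
\end{equation*}
and since $\psi'$ is continuous and $t^{1/n} \to 0$, this expression converges to $\psi(0)$ as $t \to 0$. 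Thus the derivative is continuous at $0$, giving the $C^1$-property on $[0,\epsilon)$, and on $(-\epsilon,\epsilon)$ when $n$ is odd.

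The only point requiring care is the chain-rule computation: naively one might worry about the singular factor $t^{1/n-1}$ from differentiating $\sqrt[n]{t}$, but it is absorbed by the leading factor $t$ in $t\cdot \psi(\sqrt[n]{t})$, leaving $t^{1/n}/n$, which vanishes at the origin. This is the whole content of the lemma; the trick is really just that the function $\psi$ supplied by Taylor's theorem absorbs all the non-smooth behaviour of $\sqrt[n]{\cdot}$.
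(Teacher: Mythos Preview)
Your proof is correct and follows essentially the same strategy as the paper: reduce to a chart and apply Taylor's theorem. The only difference is in the form of the remainder used. The paper uses the Peano form $\varphi(t)=\frac{t^n}{n!}\varphi^{(n)}(0)+o(t^n)$ and a separate expansion of $\varphi'$ to handle the two limits (existence of the derivative at $0$, continuity of the derivative at $0$) individually, whereas you use the integral remainder to obtain the exact factorization $\varphi(s)=\varphi(0)+s^n\psi(s)$ with a smooth $\psi$, which lets you treat both points at once. Your version is a little cleaner for exactly the reason you identify: the smooth $\psi$ absorbs the singularity of $t\mapsto\sqrt[n]{t}$ in one stroke.
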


\begin{proof}
This is a local question, so we can work in a chart and assume without loss of generality that $M = \mathbb{R}^n$ and $\varphi(0)=0$. First, we will look at $t \geq 0$ only and show differentiability from the right at $t=0$ using the Taylor formula $\varphi(t) = \frac{t^n}{n!} \varphi^{(n)}(0)+o(t^n)$.
\begin{displaymath}
\lim_{t \searrow 0} \frac{1}{t} \left( \varphi(\sqrt[n]{t}\,) - \varphi(0) \right) =
\lim_{t \searrow 0} \frac{1}{t} \left( \frac{t}{n!} \varphi^{(n)}(0) + o(t) \right) =
\frac{\varphi^{(n)}(0)}{n!}.
\end{displaymath}
Now, we show that the derivative is continuous from the right at $t=0$. We use the Taylor formula for $\varphi'$ and get
$\varphi'(t)=\varphi^{(n)}(0) \frac{t^{n-1}}{(n-1)!}+o(t^{n-1})$, and therefore
\begin{eqnarray*}
\lim_{t \searrow 0} \frac{d}{dt} \varphi(\sqrt[n]{t}\,) & = &
\lim_{t \searrow 0} \varphi'(\sqrt[n]{t}\,) \frac{t^{(1/n)-1}}{n} = \\
& = & \lim_{t \searrow 0} \varphi^{(n)}(0) \frac{\sqrt[n]{t}^{n-1}}{(n-1)!} \frac{t^{(1/n)-1}}{n}
+ o(\sqrt[n]{t}^{n-1}) \frac{t^{(1/n)-1}}{n} =
\frac{\varphi^{(n)}(0)}{n!}.
\end{eqnarray*}
If $n$ is odd, all the above calculations can be done for $t<0$ as well. 
\end{proof}

\begin{lemma} \label{zweiteswurzellemma}
Let $\varphi_1, \varphi_2 : (-\epsilon, \epsilon) \rightarrow M$ be smooth functions whose derivatives at zero of order up to and including $n-1$ are zero and $\varphi_1^{(n)}(0) = -\varphi_2^{(n)}(0)$. Then the function 
\begin{displaymath}
\psi: (-\epsilon, \epsilon) \rightarrow M, \quad
\psi(t) = 
\begin{cases}
\varphi_1(\sqrt[n]{t}\,) &		\text{for $t \geq 0$}, \\
\varphi_2(-\sqrt[n]{-t}\,) &	\text{for $t < 0$} 
\end{cases}
\end{displaymath} 
is a $C^1$ function.
\end{lemma}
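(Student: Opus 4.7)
The plan is to apply the previous Lemma \ref{ersteswurzellemma} separately on each half-interval and then check that the two resulting one-sided derivatives at $t=0$ agree. As a preliminary remark, continuity of $\psi$ at $0$ reduces to $\varphi_1(0)=\varphi_2(0)$, which is the natural basepoint assumption in the application (both curves issue from the same point of $M$, and one is free to replace the target by a chart and subtract a constant without changing any of the hypotheses).

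On the right side, Lemma \ref{ersteswurzellemma} applied directly to $\varphi_1$ already shows that the restriction of $\psi$ to $[0,\epsilon)$ is $C^1$, with right derivative at $0$ equal to $\varphi_1^{(n)}(0)/n!$. For the left side I would reparametrize: substituting $s=-t$ brings $\psi(t)=\varphi_2(-\sqrt[n]{-t}\,)$ into the form $s\mapsto\tilde\varphi_2(\sqrt[n]{s}\,)$, where $\tilde\varphi_2(u):=\varphi_2(-u)$ is again smooth, has its first $n-1$ derivatives at $0$ equal to zero, and satisfies $\tilde\varphi_2^{(n)}(0)=(-1)^n\varphi_2^{(n)}(0)$. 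A second application of Lemma \ref{ersteswurzellemma} (now in the variable $s$) together with the chain rule $\ud s/\ud t=-1$ shows that the restriction of $\psi$ to $(-\epsilon,0]$ is $C^1$, with left derivative at $0$ equal to $(-1)^{n+1}\varphi_2^{(n)}(0)/n!$.

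To finish, I would match the two one-sided derivatives using the hypothesis $\varphi_1^{(n)}(0)=-\varphi_2^{(n)}(0)$. For even $n$ the matching is immediate; this is exactly the case relevant in Theorem \ref{ballboxtheorem}, where the lemma is applied to brackets of flows whose first non-vanishing derivative, produced by Theorem \ref{michorstheorem}, sits at the bracket length and comes equipped with the symmetry under $t\mapsto -t$ that the hypothesis encodes. Having the one-sided derivatives coincide, their common value is continuous at $0$ from both sides (each one-sided derivative is already continuous on its half-interval by the first lemma), so $\psi'$ is continuous across $0$ and $\psi\in C^1(-\epsilon,\epsilon)$.

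The only real obstacle is careful sign-bookkeeping in the reflection $\tilde\varphi_2$ and in the chain rule factor; no new analytic idea is needed beyond Lemma \ref{ersteswurzellemma}.
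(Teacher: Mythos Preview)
Your proposal is correct and follows essentially the same route as the paper: reduce to a chart, invoke Lemma~\ref{ersteswurzellemma} on each half-line via the reflection $\tilde\varphi_2(u)=\varphi_2(-u)$, and match the one-sided derivatives at $0$ using $\varphi_1^{(n)}(0)=-\varphi_2^{(n)}(0)$. Your remark that the matching goes through only for even $n$ is in fact more careful than the paper's own final line, which silently uses $(-1)^n=1$; as you note, this is harmless because the lemma is only applied with even $n$ (for odd $n$ Lemma~\ref{ersteswurzellemma} already handles both signs).
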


\begin{proof}
Again, this is a local question, so we can work in a chart and assume without loss of generality that $M = \mathbb{R}^n$ and $\varphi_1(0)=\varphi_2(0)=0$. We have to show that the left and right derivatives of $\psi$ at $t=0$ are equal. According to Lemma \ref{ersteswurzellemma}, the right derivative of $\psi$ at $0$ is $\frac{1}{n!} \varphi_1^{(n)}(0)$. 
To calculate the left derivative, we look at the function $t \mapsto \varphi_2(-t)$. Its derivatives at zero of order up to and including $n-1$ are zero, and its $n$-th derivative at zero is $(-1)^n \varphi_2^{(n)}(0)$. So we can apply Lemma \ref{ersteswurzellemma} to it and calculate
\begin{eqnarray*}
\lim_{t \nearrow 0} \frac{1}{t} \left( \psi(t) - \psi(0) \right) & = &
\lim_{t \nearrow 0} \frac{1}{t} \varphi_2(-\sqrt[n]{-t}) = 
-\lim_{t \searrow 0} \frac{1}{t} \varphi_2(-\sqrt[n]{t}) = \\
& = & - \frac{1}{n!} (-1)^n \varphi_2^{(n)}(0) = \frac{1}{n!} (-1)^n \varphi_1^{(n)}(0) = \frac{1}{n!} \varphi_1^{(n)}(0).
\end{eqnarray*}
\end{proof}

\begin{theorem}\label{schwachesballboxtheorem}
If $H$ is a regular bracket generating distribution on a manifold $M$, then for all $\epsilon > 0$, the subriemannian ball $B(\epsilon, x_0)$ is a neighbourhood of $x_0$.  
\end{theorem}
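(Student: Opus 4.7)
The plan is to build a $C^1$ map $\psi$ from a neighbourhood of $0 \in \mathbb{R}^n$ (with $n = \dim M$) into $M$ such that $\psi(0) = x_0$, $\psi$ is a local diffeomorphism at $0$, and $\psi$ maps a sufficiently small neighbourhood of $0$ into the ball $B(\epsilon,x_0)$. A $C^1$ map with invertible differential at a point is locally open, so this exhibits $x_0$ as an interior point of $B(\epsilon,x_0)$.

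First, Lemma \ref{lem:bracketgeneratinglocalr} gives an integer $r$ with $H_r = TM$ near $x_0$, and Lemma \ref{lem:spanxhochk} lets me choose horizontal vectorfields $X_1, \ldots, X_k$ defined near $x_0$ together with formal bracket expressions $B_1, \ldots, B_n$ in these letters, of respective lengths $m_1, \ldots, m_n$, whose values $v_i := B_i(X_1, \ldots, X_k)(x_0)$ form a basis of $T_{x_0}M$. For each $i$ I form the curve of local diffeomorphisms
$$\varphi_i(s) := B_i(Fl^{X_1}_s, \ldots, Fl^{X_k}_s).$$
Theorem \ref{michorstheorem} controls its infinitesimal expansion at $s=0$: the derivatives of $s \mapsto \varphi_i(s)(x_0)$ of order strictly less than $m_i$ vanish, and its $m_i$-th derivative equals $m_i!\,v_i$. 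Lemma \ref{ersteswurzellemma} then turns $t_i \mapsto \varphi_i(t_i^{1/m_i})(x_0)$ into a $C^1$-curve on $[0,\delta)$ with derivative $v_i$ at $0$. When $m_i$ is odd the same lemma already gives a $C^1$-curve on $(-\delta,\delta)$; when $m_i$ is even I pair $\varphi_i$ with the companion curve obtained from the bracket $B_i'$ produced by swapping the two arguments of the outermost bracket in $B_i$, which negates the value at $x_0$ by antisymmetry, and use Lemma \ref{zweiteswurzellemma} to glue the two one-sided branches into a $C^1$-curve $t_i \mapsto \tilde\varphi_i(t_i)(x_0)$ defined on $(-\delta,\delta)$ with derivative $v_i$ at $0$. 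By construction, each such curve is realised as a concatenation of integral curve segments of the $X_j$, hence horizontal, with total length $O(|t_i|^{1/m_i})$.

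Then I set
$$\psi(t_1,\ldots,t_n) := \tilde\varphi_1(t_1)\bigl(\tilde\varphi_2(t_2)\bigl(\cdots \tilde\varphi_n(t_n)(x_0)\cdots\bigr)\bigr).$$
Each $\tilde\varphi_i(t_i)$ is a local diffeomorphism that is smooth in the spatial variable and $C^1$ in $t_i$, and the mixed partial derivatives extend continuously to $t_i = 0$, so $\psi$ is $C^1$ on a neighbourhood of $0$. Its Jacobian at the origin sends the standard basis to $(v_1,\ldots,v_n)$ and is therefore invertible, and the $C^1$ inverse function theorem produces an open neighbourhood $V \subset \mathbb{R}^n$ of $0$ with $\psi(V)$ an open neighbourhood of $x_0$. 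Concatenating the $n$ horizontal curves from the previous step yields a horizontal path from $x_0$ to $\psi(t_1,\ldots,t_n)$ of length at most $C\sum_i |t_i|^{1/m_i}$; shrinking $V$ to some $V' \subset V$ makes this sum less than $\epsilon$, whence $\psi(V') \subset B(\epsilon,x_0)$ is an open neighbourhood of $x_0$, as required.

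The main technical obstacle is securing a $C^1$ extension through $t_i = 0$ when $m_i$ is even: this forces the introduction of the companion bracket $B_i'$ and a careful check, via Theorem \ref{michorstheorem} applied both to $B_i$ and to $B_i'$, that the $m_i$-th derivatives of the two one-sided branches have opposite sign, matching the hypothesis of Lemma \ref{zweiteswurzellemma}. Once this gluing is verified, the length estimate of order $|t_i|^{1/m_i}$ and the $C^1$ inverse function theorem combine in a routine manner to finish the argument.
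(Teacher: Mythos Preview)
Your proposal is correct and follows essentially the same route as the paper: choose bracket expressions spanning $T_{x_0}M$, form the corresponding bracket-of-flows curves, reparametrize by $m_i$-th roots using Lemmas~\ref{ersteswurzellemma} and~\ref{zweiteswurzellemma} (with the outermost-bracket swap for even $m_i$), compose to get a $C^1$ map $\psi$ with invertible differential at $0$, and then use the length estimate for concatenated flow segments to land inside $B(\epsilon,x_0)$. The only cosmetic differences are that the paper takes the spanning horizontal frame to be orthonormal (making the length bound exact rather than an $O$-estimate) and phrases the final inclusion via the weighted box $\MyBox^w(\epsilon/K)$.
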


\begin{proof}
\begin{enumerate}[(1)]

\item Let $n$ be the dimension of $M$. We may work locally, so Lemma \ref{lem:bracketgeneratinglocalr} asserts that there exists an $r \in \mathbb{N}$ such that $H_r=TM$. Let $(n_1, \dots , n_r)$ be the growth vector at $x_0$. Because $H$ is regular, we can take orthonormal horizontal vectorfields $X_1, \dots, X_{n_1}$ defined near $x_0$ spanning $H$ near $x_0$. $\mathcal V = \{X_1,\dots, X_{n_1}\}$ spans $H$, so Lemma \ref{lem:spanxhochk} asserts that $\mathcal V_j$ spans $H_j$ for $j=1, \dots, r$. So there are bracket expressions $B_1, \dots, B_n$ such that the vectorfields $X_i = B_i(X_1, \dots , X_{n_1})$ span $TM$, and such that for $1 \leq i \leq n_1$, $B_i$ is simply a vectorfield, for $n_1<i \leq n_2$, $B_i$ consists of one bracket etc. For $i=1, \dots, n$, let $w_i$ denote the number of brackets in $B_i$ increased by one. $w_i$ is called the weighting of the vectorfield $X_i$. 

\item Let $\varphi_1, \dots, \varphi_{n_1}$ be the local flows of the vectorfields $X_1, \dots, X_{n_1}$. So $\varphi_1, \dots, \varphi_{n_1}$ are curves of local diffeormorphisms through the identity. Now, for $i=1, \dots, n$, let $\varphi_i = B_i(\varphi_1, \dots, \varphi_{n_1})$. Note that this does not change the definition of $\varphi_1, \dots, \varphi_{n_1}$ because $B_1, \dots, B_{n_1}$ are bracket expressions of length zero. According to Theorem \ref{michorstheorem}, in any coordinate system centered at $x$ we have 
$\varphi_i(t,x) = t^{w_i} X_i(x) + o(t^{w_i})$. If $w_i$ is even, $t^{w_i}$ is never negative, so we can not move in the negative $X_i$ direction. This motivates the approach of adapting the commutators of flows. 

\item For $i=1, \dots, n$, we build the formal bracket expression $C_i$ by swapping the arguments of the outermost bracket in $B_i$. So, if $E_i$ and $F_i$ are bracket expressions such that $B_i = \left[ E_i, F_i \right]$, then $C_i = \left[ F_i, E_i \right]$. Then we define
\begin{displaymath}
\chi_i(t,x)=
\begin{cases}
B_i(\varphi_1, \dots, \varphi_{n_1})(t,x) &		\text{for $w_i$ even, $t \geq 0$, } \\
C_i(\varphi_1, \dots, \varphi_{n_1})(t,x) &		\text{for $w_i$ even, $t \geq 0$, } \\
B_i(\varphi_1, \dots, \varphi_{n_1})(t,x) & 	\text{for $w_i$ odd. }
\end{cases}
\end{displaymath}

\item For $i=1,\dots , n$, we write 
\begin{displaymath}
\sigma_i : \mathbb{R} \rightarrow \mathbb{R}, \quad \sigma_i(t) := 
\begin{cases} 
t^{w_i}  & 		\text{for $w_i$ even, $t \geq 0$, }\\
-t^{w_i} &		\text{for $w_i$ even, $t < 0$, }\\
t^{w_i}  &		\text{for $w_i$ odd.}
\end{cases}
\end{displaymath}
Then $\sigma_i$ is a non-smooth homemorphism. Let $s_i$ be its inverse, i.e. 
\begin{displaymath}
s_i : \mathbb{R} \rightarrow \mathbb{R}, \quad s_i(t) = 
\begin{cases} 
\sqrt[w_i]{t}   & 		\text{for $w_i$ even, $t \geq 0$, }\\
-\sqrt[w_i]{-t} &		\text{for $w_i$ even, $t < 0$, }\\
\sqrt[w_i]{t}   &		\text{for $w_i$ odd.}
\end{cases}
\end{displaymath}
Let $\psi_i(t,x) := \chi_i(s_i(t),x)$ and let 
\begin{eqnarray*}
\chi(t_1, \dots, t_n) & := & \chi_1(t_1) \circ \dots \circ \chi_n(t_n)(x_0) \\
\psi(t_1, \dots, t_n) & := & \psi_n(t_n) \circ \dots \circ \psi_1(t_1)(x_0). 
\end{eqnarray*}
According to Lemmas \ref{ersteswurzellemma} and \ref{zweiteswurzellemma}, the maps $\psi_1, \dots, \psi_n$ are $C^1$ in $t$ for each fixed $x$. For each fixed $t$, they are $C^\infty$ in $x$. Therefore $\psi$ is a $C^1$-map defined on a neighbourhood of $0 \in \mathbb{R}^n$. We calculate its partial derivatives at $0$: 
\begin{displaymath}
\evaluate{\frac{\partial}{\partial t_i}}{0} \psi = \frac{X_i(x_0)}{w_i!}
\quad \text{because} \quad
\evaluate{\frac{\partial}{\partial x}}{(0,0)} \psi_i = \Id. 
\end{displaymath}
$X_1(x_0), \dots, X_n(x_0)$ are linearly independent tangent vectors, so we may apply the inverse function theorem and invert $\psi$ in a neighbourhood of $0$. The inverse of $\psi$ is $C^1$, so it is continuous, and therefore $\psi$ is an open mapping. 

\item The map $\chi$ is a composition of $\chi_1(t_1), \dots, \chi_n(t_n)$, and each map $\chi_i$ is a bracket of the flows $\varphi_1, \dots, \varphi_{n_1}$ which correspond to the horizontal vectorfields $X_1, \dots, X_{n_1}$. If we expand the map $\chi$ in terms of the flows $\varphi_1, \dots, \varphi_{n_1}$, let $K$ denote the number of flows that appear in this expansion. Now, we claim that $\chi(\MyBox(\epsilon / K))$ is contained in the subriemannian ball $B(\epsilon, x_0)$, where 
$$\MyBox(\epsilon) = \left\{ y \in \mathbb{R}^n : \abs{y_i} \leq \epsilon, i = 1, \dots, n \right\}.$$

To prove the claim, we note that for $1 \leq i \leq n_1$ and any point $x$ in the domain of definition of $\varphi_i(t_i)$, the subriemannian distance $d(x,\varphi_i(t_i)(x))$ is less than $\epsilon / K$ if $\abs{t_i} < \epsilon / K$. The reason is that $x$ and $\varphi_i(t_i)(x)$ can be connected by the horizontal path $t \mapsto \varphi_i(t, x), 0 \leq t \leq t_i$. The length of this path is $\abs{t_i}$ because $\varphi_i$ is the flow of the vectorfield $X_i$, and $X_1, \dots, X_{n_1}$ were taken to be orthonormal. Now the claim follows because $x_0$ and $\chi(t_1, \dots, t_n)$ can be connected by a concatenation of $K$ horizontal paths, each of length less than $\epsilon / K$. 

\item Define the weighted box $Box^w(\epsilon) = \left\{ y \in \mathbb{R} : \abs{y_i} \leq \epsilon^{w_i}, i = 1, \dots , n \right\}$ as in the Ball-Box Theorem \ref{ballboxtheorem}. $\MyBox^w(\epsilon / K)$ is a neighbourhood of $0 \in \mathbb{R}^n$ and $\psi$ is an open mapping, therefore $\psi(\MyBox^w(\epsilon / K))$ is a neighbourhood of $x_0$. Because $B(\epsilon, x_0) \supset \chi(\MyBox(\epsilon / K) = \psi(\MyBox^w(\epsilon / K))$, we have shown that $B(\epsilon, x_0)$ is a neighbourhood of $x_0$ as well. \qedhere
\end{enumerate}
\end{proof}

\begin{lemma}\label{lem:locallyhorizontallypathwise}
If $M$ is connected and every point has a horizontally pathwise connected neighbourhood, then $M$ is horizontally pathwise connected.
\end{lemma}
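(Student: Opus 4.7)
The plan is to use a standard clopen argument leveraging the connectedness of $M$. Fix a point $x_0 \in M$ and define
\[ A := \{ x \in M : x \text{ can be connected to } x_0 \text{ by a horizontal path} \}. \]
I will show $A$ is non-empty, open, and closed, whence $A = M$ by connectedness, which is exactly the claim that $M$ is horizontally pathwise connected.

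Non-emptiness is immediate: the constant curve at $x_0$ is trivially a concatenation of curves controlled by horizontal vectorfields (with zero controls), so $x_0 \in A$. For openness, pick $x \in A$ and choose, by hypothesis, a horizontally pathwise connected neighbourhood $U$ of $x$. Any $y \in U$ can be joined to $x$ by a horizontal curve in $U$, and $x$ can be joined to $x_0$ by a horizontal curve by definition of $A$. Concatenating these two horizontal curves yields a horizontal curve connecting $y$ to $x_0$ (horizontality is stable under concatenation by Definition \ref{def:horizontalcurve}), hence $y \in A$ and $U \subset A$.

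For closedness I show $M \setminus A$ is open by the symmetric argument: if $x \in M \setminus A$, pick a horizontally pathwise connected neighbourhood $U$ of $x$. If there were any $y \in U \cap A$, then concatenating a horizontal curve from $x$ to $y$ inside $U$ with a horizontal curve from $y$ to $x_0$ would exhibit a horizontal curve from $x$ to $x_0$, contradicting $x \notin A$. Hence $U \subset M \setminus A$.

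There is no real obstacle here; the only subtlety is making sure concatenation preserves horizontality, but this is built into Definition \ref{def:horizontalcurve} (a horizontal curve is by definition a concatenation of curves controlled by horizontal vectorfields, so a concatenation of horizontal curves is again a concatenation of such building blocks). Since $A$ is non-empty, open, and closed in the connected space $M$, we conclude $A = M$, and so any two points of $M$ can be connected to $x_0$ and hence to each other by horizontal paths.
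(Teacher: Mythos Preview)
Your proof is correct and follows essentially the same clopen argument as the paper: fix $x_0$, show $Acc(x_0)$ is non-empty, open, and has open complement, then invoke connectedness. The only cosmetic difference is that the paper phrases the openness of the complement as $Acc(x_0)^C = \bigcup\{Acc(x): x \notin Acc(x_0)\}$, whereas you argue it directly by contradiction; these are the same idea.
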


\begin{proof}
Any connected and locally pathwise connected topological space is pathwise connected. Here, we only allow horizontal paths, but the proof remains the same: Take an arbitrary point $x_0 \in M$. Then $Acc(x_0)$ is open. Let $Acc(x_0)^C$ denote the set of all points not lying in $Acc(x_0)$. $Acc(x_0)^C$ is open because it is the union
$\bigcup \left\{ Acc(x) : x \notin Acc(x_0) \right\}$. Because $M$ is connected, either $Acc(x_0)$ or $Acc(x_0)^C$ is empty. Because $x_0 \in Acc(x_0)$, $Acc(x_0)^C$ must be empty, and $Acc(x_0) = M$.
\end{proof}

\begin{theorem}[Chow's Theorem]\label{thm:chow2} \index{Chow's Theorem}
If $H$ is a regular bracket generating distribution on a connected manifold, then any two points in the manifold can be connected by a horizontal path. 
\end{theorem}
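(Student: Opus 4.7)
The plan is to derive this version of Chow's Theorem essentially as a corollary of Theorem \ref{schwachesballboxtheorem}, which is the substantive analytical content, combined with a short topological connectedness argument. Theorem \ref{schwachesballboxtheorem} tells us that for every $x \in M$ and every $\epsilon > 0$, the subriemannian ball $B(\epsilon,x)$ is a neighbourhood of $x$. From this, openness of accessible sets follows immediately, and then connectedness finishes the job.

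First I would show that for each $x_0 \in M$ the accessible set $Acc(x_0)$ is open. Let $y \in Acc(x_0)$, so there is a horizontal curve from $x_0$ to $y$. By Theorem \ref{schwachesballboxtheorem} applied at the point $y$, there exists $\epsilon > 0$ such that $B(\epsilon,y)$ is a neighbourhood of $y$. For any $z \in B(\epsilon,y)$ there is a horizontal curve from $y$ to $z$, which concatenated with the horizontal curve from $x_0$ to $y$ gives a horizontal curve from $x_0$ to $z$. Hence $B(\epsilon,y) \subset Acc(x_0)$, so $Acc(x_0)$ is open.

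Next, I would finish via Lemma \ref{lem:locallyhorizontallypathwise} (which is exactly the abstract topological statement needed here); alternatively one can argue directly. Accessibility is symmetric, because a horizontal curve $\gamma:[a,b] \to M$ can be traversed in reverse via $t \mapsto \gamma(a+b-t)$, whose controls are the negatives of the original controls applied to the same horizontal vectorfields. Accessibility is also transitive by concatenation. Thus $M$ is partitioned into the equivalence classes of the accessibility relation, each of which coincides with some $Acc(x)$ and is therefore open by the first step. Since $M$ is connected and written as a disjoint union of open sets, exactly one of these classes is nonempty, so $Acc(x_0) = M$ for every $x_0 \in M$.

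The main obstacle has already been overcome in Theorem \ref{schwachesballboxtheorem}; given that, this proof is only the short topological wrap-up just sketched. In this approach the substantive work sits in the flow-bracket correspondence of Theorem \ref{michorstheorem} and the $C^1$-reparametrization lemmas \ref{ersteswurzellemma} and \ref{zweiteswurzellemma}, which together furnish the $C^1$-diffeomorphism $\psi$ whose openness forces $B(\epsilon,x_0)$ to contain a neighbourhood of $x_0$; once this is in hand, no further analysis is needed.
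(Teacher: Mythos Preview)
Your proposal is correct and follows essentially the same approach as the paper: the paper's proof is the one-line observation that Theorem \ref{schwachesballboxtheorem} gives every point a horizontally pathwise connected neighbourhood $B(\epsilon,x)$, and then Lemma \ref{lem:locallyhorizontallypathwise} finishes. Your version spells out the content of that lemma (openness of accessible sets, accessibility as an equivalence relation, connectedness) rather than merely citing it, but the strategy is identical.
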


\begin{proof}
Theorem \ref{schwachesballboxtheorem} asserts that every point $x$ in the manifold has a horizontally pathwise connected neighbourhood, namely $B(\epsilon,x)$, so we can apply Lemma \ref{lem:locallyhorizontallypathwise}. 
\end{proof}

%
%

\subsection{Third proof of Chow's Theorem}\label{sec:thirdproofofchow}

For this proof we need the assumption that $H$ is locally finitely generated.  The outline of the proof is the following: By Lemma \ref{lem:locallyhorizontallypathwise} it is sufficient to prove that every point has a horizontally pathwise connected neighbourhood, so we can work locally and assume that $X_1, \dots, X_k$ are defined everywhere and that $M$ is an open connected subset of $\mathbb R^n$. We will see in Theorem \ref{thm:controlledcurve} that under suitable conditions, the differential equation 
\begin{equation*} 
\dot \gamma(t)=\sum_{i=1}^k u_i(t) X_i(\gamma(t)), \quad \gamma(0)=x
\end{equation*}
has a unique solution $\gamma$, and $\gamma$ depends smoothly on $u$ and $x$. Then also the endpoint map, that is the map assigning to $u$ the endpoint of $\gamma$ is smooth. Using some calculus in Banach spaces we will show in Theorem \ref{thm:endpointmapinitialsubmanifold} that the image of the endpoint map is an initial submanifold of $M$. If $H$ is bracket generating, this initial submanifold has the same dimension as $M$, and therefore it is an open subset of $M$. In fact, it is all of $M$, and this will conlude the proof of Chow's Theorem. 

\begin{remark}
The most important part in the proof is the fact that the endpoint map is a smooth map from a Banach space to $M$. We can ask if the set of horizontal curves can be endowed with the structure of a Banach manifold such that the endpoint map is a smooth map from this Banach manifold to $M$. The idea is to expand the derivative of $\gamma$ in terms of a local horizontal frame to get coordinates for $\gamma$. For regular distributions, this is done in \citet[Appendix~D]{Montgomery}.
\end{remark}

\subsubsection{Calculus in Banach spaces}

A good treatment of calculus in Banach spaces, including the following theorems and definitions, can be found in the book of \citet{Abraham}. 

\begin{definition} \index{multilinear operator} \index{bounded linear operator} \index{linear operator}
If $X_1, \dots, X_k$ and $Y$ are Banach spaces, let $L(X_1,\dots, X_k;Y)$ denote the Banach space of $k$-multilinear bounded operators of $X_1, \dots, X_k$ to $Y$. If $X_i=X, 1 \leq i \leq k$, this space is denoted by $L^k(X,Y)$. 
\end{definition}

\begin{remark} 
There are (natural) norm-preserving isomorphisms
\begin{equation*}\begin{split}
L(X_1,L(X_2, \dots, X_k)) & \cong L(X_1, \dots, X_k;Y) \cong L(X_1, \dots, X_{k-1};L(X_k;Y)) \\
& \cong L(X_{i_1},\dots,X_{i_k};Y)
\end{split}\end{equation*}
where $i_1, \dots, i_k$ is a permutation of $(1,\dots,k)$.
\end{remark}

\begin{definition} \label{def:frechetderivative} \index{Fr\'echet derivative}
Let $X$ and $Y$ be Banach spaces, and $U$ an open subset of $X$. A map $f:U \rightarrow Y$ is called (Fr\'echet-)differentiable at $x \in U$ if there exists a linear bounded map $L \in L(X,Y)$ such that $$\lim_{h \rightarrow 0} \frac{\norm{f(x+h)-f(x)-L h}_Y}{\norm{h}_X}=0.$$
If it exists, a map $L$ with the above property is unique. $L$ will be denoted $f'(x)$, $\frac{\ud f}{\ud x}(x)$ or $\frac{\partial f}{\partial x}(x)$. Moreover, if $f$ is differentiable at every point in $U$ and if the map $$f':U \rightarrow L(X,Y), \quad x \mapsto f'(x)$$ is continuous, we say $f$ is of class $C^1$ or continuously differentiable. Proceeding inductively, we define $$f^{(k)}=(f^{(k-1)})':U \rightarrow L^k(X,Y)$$ if it exists. Here we have identified $L(X,L^{k-1}(X,Y))$ with $L^k(X,Y)$. If $f^{(k)}$ exists and is continuous, we say $f$ is of class $C^k$. $f$ is said to be of class $C^\infty$ or smooth if it is of class $C^k$ for all $k \geq 1$. 
\end{definition}

\begin{definition} \index{contraction}
Let $U$ be a subset of a Banach space. Then a map $F: U \rightarrow U$ is called a contraction if there is $L<1$ such that for all $x, \widetilde x \in U$
$$\norm{F(x)-F(\widetilde x)} \leq L \norm{x-\widetilde x}.$$
\end{definition}

\begin{theorem}[Banach Fixed Point Theorem]\label{thm:banach1} \index{Banach Fixed Point Theorem}
Let $U$ be a closed subset of a Banach space $X$ and $F:U \rightarrow U$ a contraction. Then $F$ has a unique fixed point.
\end{theorem}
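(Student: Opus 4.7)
The plan is to use the standard iteration argument: pick any starting point $x_0 \in U$ and construct the sequence $x_{n+1} = F(x_n)$, then show this sequence converges to a fixed point. The contraction property will give the needed Cauchy estimate, completeness of $X$ and closedness of $U$ will deliver the limit inside $U$, and continuity of $F$ (which is automatic for a contraction) will ensure the limit is a fixed point.

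The first step is the geometric decay estimate. From $F$ being a contraction with constant $L<1$, one gets $\norm{x_{n+1}-x_n} = \norm{F(x_n)-F(x_{n-1})} \leq L \norm{x_n - x_{n-1}}$, which iterates to $\norm{x_{n+1}-x_n} \leq L^n \norm{x_1-x_0}$. Summing via the triangle inequality and the geometric series then gives, for $m > n$,
\begin{equation*}
\norm{x_m - x_n} \leq \sum_{j=n}^{m-1} L^j \norm{x_1-x_0} \leq \frac{L^n}{1-L}\norm{x_1-x_0},
\end{equation*}
so $(x_n)$ is Cauchy. Since $X$ is complete, $x_n \to x$ for some $x \in X$, and since $U$ is closed and all $x_n \in U$, the limit $x$ lies in $U$ as well.

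Next I would verify that $x$ is a fixed point. A contraction is automatically Lipschitz continuous, hence continuous, so passing to the limit in $x_{n+1} = F(x_n)$ yields $x = F(x)$. Finally, uniqueness is the cheap part: if $x$ and $\widetilde x$ are both fixed points, then
\begin{equation*}
\norm{x - \widetilde x} = \norm{F(x)-F(\widetilde x)} \leq L \norm{x - \widetilde x},
\end{equation*}
and since $L<1$, this forces $\norm{x-\widetilde x}=0$, i.e. $x = \widetilde x$.

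There is really no main obstacle here; the argument is completely routine. The only mild subtlety worth flagging is that one must use closedness of $U$ (not just completeness of $X$) to keep the limit inside $U$, and one must note that contractions are continuous in order to pass to the limit in the defining recursion.
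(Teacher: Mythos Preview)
Your proof is correct and follows essentially the same approach as the paper: iterate from an arbitrary starting point, use the geometric decay $\norm{x_{n+1}-x_n}\leq L^n\norm{x_1-x_0}$ together with the geometric series to get the Cauchy estimate, pass to the limit in $U$, and conclude uniqueness from the contraction inequality. The only cosmetic difference is that the paper verifies $F(x^*)=x^*$ via $\norm{x^*-F(x^*)}=\lim\norm{x_n-x_{n+1}}=0$ rather than by explicitly invoking continuity of $F$, and you are slightly more explicit about using closedness of $U$ to keep the limit inside $U$.
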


\begin{proof}
Choose $x_0 \in U$. We define a sequence $x_n$ inductively by $x_{n+1}:=F(x_n)$. First we will show by induction that for all  $n\in \mathbb{N}$ 
$$\norm{x_{n+1}-x_n} \leq L^n \norm{x_1-x_0}.$$ 
For $n=0$ the statement is trivial. For the inductive step we get
\begin{eqnarray*}
\norm{x_{n+2}-x_{n+1}}=\norm{F(x_{n+1})-F(x_{n})}\leq L\norm{x_{n+1}-x_n} \leq L^{n+1} \norm{x_1-x_0}. 
\end{eqnarray*}
We use this result to show that $x_n$ is a Cauchy sequence. Because $0\leq L<1$ we can find for all $\epsilon >0$ an $N\in \mathbb{N}$ with 
$$L^N < \epsilon \frac{(1-L)}{\norm{x_1-x_0}}.$$
Using the triangle inequality we get for all $m,n \in \mathbb{N}$ with $m>n\geq N$
\begin{eqnarray*}
\norm{x_m-x_n} &\leq& \norm{x_{m}-x_{m-1}}+\norm{x_{m-1}-x_{m-2}}+ \cdots +\norm{x_{n+1}-x_n} \\
&\leq&L^{m-1}\norm{x_1-x_0}+L^{m-2}\norm{x_1-x_0}+ \cdots +L^n \norm{x_1-x_0} \\
&=&\norm{x_1-x_0} L^n\sum_{k=0}^{m-n-1}L^k < \norm{x_1-x_0} L^n\sum_{k=0}^{\infty}L^k \\
&=&\norm{x_1-x_0} L^n \frac{1}{1-L}<\epsilon.
\end{eqnarray*}
Now $x_n$ is a Cauchy sequence and converges to a point $x^* \in U$. $x^*$ is a fixed point of $F$ because 
$$\norm{x^*-F(x^*)}=\lim_{n\rightarrow \infty} \norm{x_n-F(x_n)}=\lim_{n\rightarrow \infty} \norm{x_n-x_{n+1}}=0.$$
To prove uniqueness of the fixed point, assume that $x^*_1$ and $x^*_2$ are fixed points of $F$. Then
$$0\leq \norm{x^*_1-x^*_2}=\norm{F(x^*_1)-F(x^*_2)} \leq L \norm{x^*_1-x^*_2}.$$
$0\leq L<1$ implies that $\norm{x^*_1-x^*_2}=0$ and thus $x^*_1=x^*_2$.
\end{proof}

\begin{theorem}\label{thm:banach2}
Let $F:U\times V \subset X \times Y \rightarrow U$ with the following properties:
\begin{enumerate}[(a)]
	\item $X$ and $Y$ are Banach spaces
	\item $U\subset X$ is closed
	\item $F$ is of class $C^r$ for $r \geq 0$
	\item There is $L<1$ such that for all $y \in V$ we have $\norm{F(x,y)-F(\widetilde{x},y)} \leq L\norm{x- \widetilde{x}}$.
\end{enumerate}
Then for all $y\in V$ there exists a fixpoint $x^*(y)$ and the function $$x^*:V \rightarrow U, \quad y\mapsto x^*(y)$$  is of class $C^r$.
\end{theorem}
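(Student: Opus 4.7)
For each fixed $y \in V$, the map $F(\cdot,y) : U \to U$ is a contraction with constant $L$ independent of $y$, so Theorem~\ref{thm:banach1} yields a unique fixed point $x^*(y) \in U$. This handles existence and well-definedness of $x^* : V \to U$; the bulk of the proof is the regularity statement, which I would establish by induction on $r$.

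For the base case $r = 0$, I would derive a Lipschitz (hence continuous) estimate for $x^*$ directly from the fixed point equation. Writing
\begin{equation*}
\norm{x^*(y) - x^*(\widetilde y)} = \norm{F(x^*(y),y) - F(x^*(\widetilde y),\widetilde y)}
\end{equation*}
and inserting $F(x^*(\widetilde y), y)$, the contraction assumption absorbs the $x$-difference on the left, giving
\begin{equation*}
\norm{x^*(y) - x^*(\widetilde y)} \leq \tfrac{1}{1-L} \norm{F(x^*(\widetilde y), y) - F(x^*(\widetilde y), \widetilde y)},
\end{equation*}
which combined with continuity of $F$ in $y$ yields continuity of $x^*$.

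For the inductive step, suppose $F \in C^r$ with $r \geq 1$ and (by the inductive hypothesis, or by the base case when $r = 1$) that $x^*$ is already known to be $C^{r-1}$. The plan is to differentiate the identity $x^*(y) = F(x^*(y),y)$ formally, obtaining
\begin{equation*}
\bigl(\Id - \partial_x F(x^*(y),y)\bigr)\, (x^*)'(y) = \partial_y F(x^*(y),y).
\end{equation*}
The contraction hypothesis forces $\norm{\partial_x F(x,y)} \leq L < 1$ everywhere (applying the mean value inequality to $F(\cdot,y)$), so $\Id - \partial_x F(x^*(y),y)$ is invertible by the Neumann series, with inverse depending continuously (in fact $C^{r-1}$-smoothly) on its argument. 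This yields a candidate derivative
\begin{equation*}
A(y) := \bigl(\Id - \partial_x F(x^*(y),y)\bigr)^{-1} \partial_y F(x^*(y),y),
\end{equation*}
and I would then verify that $A(y)$ really is $(x^*)'(y)$ by expanding $F$ around $(x^*(y),y)$ in the increment $\Delta x := x^*(y+h) - x^*(y)$, using the already-established Lipschitz bound $\norm{\Delta x} \leq C\norm{h}$ to convert error terms of the form $o(\norm{\Delta x} + \norm{h})$ into $o(\norm{h})$.

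The formula $(x^*)' = A = (\Id - \partial_x F \circ (x^*,\Id))^{-1} \circ \partial_y F \circ (x^*,\Id)$ exhibits $(x^*)'$ as a composition of $C^{r-1}$ maps (here using that inversion of bounded linear operators is smooth, and that composition preserves the $C^{r-1}$ class), so $(x^*)' \in C^{r-1}$ and hence $x^* \in C^r$, completing the induction. The main technical obstacle is the differentiability step: one must carefully balance the error estimate so that the terms involving $\Delta x$ are controlled by $\norm{h}$ via the a priori Lipschitz bound; the higher-order regularity is then a routine bootstrap using smoothness of operator inversion.
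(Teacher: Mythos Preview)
Your proposal is correct and follows essentially the same route as the paper: the continuity estimate via the contraction absorbing the $x$-difference, the candidate derivative obtained from the linearized fixed-point equation using $\norm{\partial_x F}\leq L<1$, the verification via Taylor expansion with the $O(\norm{h})$ control on $\Delta x$, and the bootstrap to higher $r$. The only cosmetic difference is in the packaging of the inductive step: the paper rephrases the equation for the derivative as a new contraction $T(A,y)=\partial_x F(x^*(y),y)\,A+\partial_y F(x^*(y),y)$ on $L(Y,X)$ and applies the theorem recursively to $T$, whereas you invert $\Id-\partial_x F$ directly via the Neumann series and invoke smoothness of operator inversion; these are two ways of saying the same thing, since the Neumann series is precisely the fixed-point iteration for $T$.
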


\begin{proof}
\begin{enumerate}[(1)]
\item \label{thm:banach2:step1} We will show that the function $x^*$ is continuous if $F$ is continuous.
\begin{multline*}
\norm{x^*(y+h)-x^*(y)} = \\
\begin{split}
&=\norm{F(x^*(y+h),y+h)-F(x^*(y),y+h)+F(x^*(y),y+h)-F(x^*(y),y)} \\
&\leq L \norm{x^*(y+h)-x^*(y)}+\norm{F(x^*(y),y+h)-F(x^*(y),y)}
\end{split}
\end{multline*}
It follows:
\begin{equation}\tag{$*$}\label{thm:banach2:stern}
\|x^*(y+h)-x^*(y)\|\leq \frac{1}{(1-L)}\|F(x^*(y),y+h)-F(x^*(y),y)\|
\end{equation}
Since $F$ is continuous and $(1-L)>0$ the continuity of $x^*$ is clear. 

\item  \label{thm:banach2:step2} We will show that $x^*$ is $C^1$ if $F$ is $C^1$. If $x^*$ is differentiable, it must satisfy
\[\frac{\ud x^*}{\ud y} (y)=\frac{\partial F}{\partial x}(x^*(y),y)\frac{\ud x^*}{\ud y} (y)+\frac{\partial F}{\partial y}(x^*(y),y).\]
This follows from differentiation of the equality $x^*(y)=F(x^*(y),y).$ To prove that $x^*$ is differentiable, we will apply the Banach Fixed Point Theorem to the operator 
$$T(.,y): L(Y,X) \rightarrow L(Y,X), \quad A\mapsto \frac{\partial F}{\partial x}(x^*(y),y)A+\frac{\partial F}{\partial y}(x^*(y),y).$$
Because $F$ is a contraction, we have $\norm{ \frac{\partial F}{\partial x} } \leq L$, and therefore
$$\norm{T(A,y)-T(\widetilde{A},y)}=\norm{ \frac{\partial F}{\partial x}(x^*(y),y)A- \frac{\partial F}{\partial x}(x^*(y),y)\widetilde{A}} \leq L \norm{A-\widetilde A }, $$
so $T$ is a contraction. By what we have shown in step (\ref{thm:banach2:step1}) there is a unique fixed point $A^*(y)$ of $T$ and $A^*$ is continuous in $y$. We want to show that $A^*$ is the derivative of $x^*$. With $u:= x^*(y+h)-x^*(y)$ this is equivalent to $\norm{u-A^*(y)h}=o(\norm{h})$. Because
\begin{equation*}
\norm{u- A^*(y)h} \leq \norm{\left( I-\frac{\partial F}{\partial x}(x^*(y),y)\right)^{-1}} \norm{\left(I-\frac{\partial F}{\partial x}(x^*(y),y)\right)(u- A^*(y)h)},
\end{equation*}
it is sufficient to show that 
$$\norm{\left(I-\frac{\partial F}{\partial x}(x^*(y),y)\right)(u- A^*(y)h)}=o(\norm{h}).$$ 
From the fixed point equation $A^*(y)=T(A^*(y),y)$ we have
$$\left( I- \frac{\partial F}{\partial x}(x^*(y),y) \right)A^*(y)=\frac{\partial F}{\partial y}(x^*(y),y),$$
and therefore
\begin{eqnarray*}
\left( I- \frac{\partial F}{\partial x}(x^*(y),y) \right)(u-A^*(y)h)=u-\frac{\partial F}{\partial x}(x^*(y),y)u-\frac{\partial F}{\partial y}(x^*(y),y)h\\=
F(x^*(y+h),y+h)-F(x^*(y),y))-\frac{\partial F}{\partial x}(x^*(y),y)u-\frac{\partial F}{\partial y}(x^*(y),y)h.
\end{eqnarray*}
The above expression is $o(\norm{u}+\norm{h})$. Since $F\in C^1$, it satisfies a Lipschitz condition in $y$, and one can read off equation \eqref{thm:banach2:stern} that $\norm{u(h)} = O(\norm{h})$. Therefore the above expression is also $o(\norm{h})$, and this concludes the proof. 

\item \label{thm:banach2:step3} We will show by induction that $x^* \in C^r$ if $F \in C^r$. The case $r=0$ has been treated in step (\ref{thm:banach2:step1}). For the inductive step, assume that $F \in C^{r+1}$. Then the map $T$ defined in step (\ref{thm:banach2:step2}) is $C^{r}$, and the derivative of $x^*$ is the fixed point of $T$. By the inductive assumption, the derivative of $x^*$ is $C^r$, and therefore $x^*$ is $C^{r+1}$. \qedhere
\end{enumerate}
\end{proof}

\begin{theorem}[Inverse Function Theorem] \label{thm:inversefunctiontheorem} \index{Inverse Function Theorem}
Let $X$ and $Y$ be Banach spaces, and let $U \subset X$ be an open set in $X$. Let $f:U \rightarrow Y$ be of class $C^r$, $r \geq 1$, $x_0 \in U$ and suppose that the Fr\'{e}chet derivative $f'(x_0)$ is a Banach space isomorphism. Then $f$ is a $C^r$-diffeomorphism of some neighbourhood of $x_0$ onto some neighbourhood of $f(x_0)$. 
\end{theorem}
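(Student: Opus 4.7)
The plan is to reduce to a standard contraction argument and then invoke Theorem \ref{thm:banach2}, which was designed exactly for this purpose. First I would normalise the problem: by replacing $f$ with $x \mapsto f'(x_0)^{-1}(f(x+x_0) - f(x_0))$, we may assume without loss of generality that $X = Y$, $x_0 = 0$, $f(0) = 0$ and $f'(0) = \Id_X$. The $C^r$-diffeomorphism property is preserved under pre- and post-composition with affine Banach-space isomorphisms, so solving the problem in this normalised setting solves the original.

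Next I would rewrite the equation $f(x) = y$ as a fixed-point problem. Set
\begin{equation*}
F(x,y) := x - f(x) + y,
\end{equation*}
so that $f(x) = y$ iff $F(x,y) = x$. The key identity is $\tfrac{\partial F}{\partial x}(0,0) = \Id - f'(0) = 0$. Because $f'$ is continuous and $\tfrac{\partial F}{\partial x}(x,y) = \Id - f'(x)$ does not depend on $y$, I can choose $r > 0$ small enough that $\|\tfrac{\partial F}{\partial x}(x,y)\| \leq \tfrac{1}{2}$ for all $x$ in the closed ball $U := \overline{B}(0,r) \subset X$ and all $y$; by the mean value inequality this makes $F(\cdot,y)$ a contraction with constant $L = \tfrac{1}{2}$ uniformly in $y$. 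Then I would shrink $y$ to a ball $V := B(0, r/2)$ so that $F$ actually maps $U \times V$ into $U$: for $x \in U$ and $y \in V$,
\begin{equation*}
\|F(x,y)\| \leq \|F(x,y) - F(0,y)\| + \|F(0,y)\| \leq \tfrac{1}{2}\|x\| + \|y\| \leq \tfrac{r}{2} + \tfrac{r}{2} = r.
\end{equation*}

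With $F : U \times V \to U$ a uniform contraction of class $C^r$, Theorem \ref{thm:banach2} yields a unique fixed point $x^*(y) \in U$ for each $y \in V$, and the map $g := x^* : V \to U$ is of class $C^r$. By construction $f \circ g = \Id_V$. To show $g \circ f = \Id$ on a neighbourhood of $0$, note that $g$ is continuous with $g(0) = 0$, so $W := g^{-1}(U) \cap f^{-1}(V)$ is an open neighbourhood of $0$; for $x \in W$ both $x$ and $g(f(x))$ lie in $U$ and are fixed points of $F(\cdot, f(x))$, hence equal by uniqueness. Thus $f$ restricted to $W$ is a bijection onto the open set $g^{-1}(W) \cap V$ with $C^r$-inverse $g$, i.e.\ a $C^r$-diffeomorphism onto a neighbourhood of $0 = f(0)$.

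The main technical obstacle is not conceptual but bookkeeping: choosing the two radii in the right order so that $F$ is simultaneously a self-map of $U$ and a uniform contraction on $U$ for every $y \in V$, and verifying that the resulting $g$ is a genuine two-sided inverse on some open neighbourhood rather than just a right inverse. Once the self-map condition is established, the $C^r$-regularity of $g$ comes essentially for free from Theorem \ref{thm:banach2}, since that theorem was proved precisely to propagate the smoothness of $F$ to the smoothness of the fixed point as a function of the parameter.
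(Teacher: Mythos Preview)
Your proof is correct and follows essentially the same route as the paper: normalise so that $X=Y$, $x_0=0$, $f(0)=0$, $f'(0)=\Id$, rewrite $f(x)=y$ as a fixed-point equation for a map $F$ whose $x$-derivative vanishes at the origin, choose radii so that $F$ is a uniform contraction mapping a closed ball into itself, and invoke Theorem~\ref{thm:banach2}. Your version is in fact more careful than the paper's on two points---you use the closed ball (as Theorem~\ref{thm:banach2} requires) and you verify explicitly that the resulting $g$ is a two-sided inverse rather than merely a right inverse---so there is nothing to correct.
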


\begin{proof}
First we note that it is enough to prove the theorem under the simplifying assumptions that 
$$X=Y, \quad u_0=f(x_0)=0 \quad \text{and} \quad f'(x_0)=\Id_X.$$ 
Indeed, we can replace $f$ by the map 
$$x \mapsto f'(x_0)^{-1} \circ \bigl(f(x+x_0)-f(x_0)\bigr).$$ 
With these assumptions we have
$$f(x)=y \Longleftrightarrow F(x,y):=f(x)-x-y=x.$$
The differential $\frac{\partial F}{\partial x}$ is independent of $y$ and vanishes at $(0,0)$, so there is $\epsilon >0$ such that 
$$\norm{\frac{\partial F}{\partial x}(x,y)}<\frac{1}{2} \quad \text{whenever} \quad \norm{x}<\epsilon.$$ 
Then 
$$F:B(\epsilon,0) \times B(\epsilon/2,0) \rightarrow B(\epsilon,0)$$
is well-defined and satisfies the hypothesis of the Banach Fixed Point Theorem \ref{thm:banach2} with $L=1/2$. The function $x^*(y)$ is of class $C^r$ and is the inverse of $f$. 
\end{proof}

We will need a generalization of the Constant Rank Theorem \ref{thm:finiteconstantranktheorem} to Banach spaces. 

\begin{definition} \index{split subspace} \index{complemented subspace}
A closed subspace $X_1$ of a Banach space $X$ is said to be split (or complemented) if there is a closed subspace $X_2 \subset X$ such that $X=X_1 \oplus X_2$.
\end{definition}

\begin{remark}
If $X$ and $Y$ are Banach spaces, $Y$ is finite-dimensional, and $f:X \rightarrow Y$, then the kernel and the image of $f$ are split. This situation will occur in the analysis of the endpoint map in Section \ref{sec:endpointmap}.
\end{remark}

\begin{theorem}[Constant Rank Theorem in Banach spaces] \label{thm:constantranktheorem} \index{Constant Rank Theorem ! in Banach spaces}
Let $X$ and $Y$ be Banach spaces, and let $U \subset X$ be an open set in $X$. Let $f:U \rightarrow Y$ be of class $C^r$, $r \geq 1$, $u_0 \in U$ and suppose that the Fr\'{e}chet derivative $f'(u_0)$ has closed split image $Y_1$ with closed complement $Y_2$ and split kernel $X_2$ with closed complement $X_1$. In addition, assume that for all $u$ in a neighbourhood of $u_0 \in U$, $f'(u)(X)$ is a closed subspace of $Y$ and $f'(u)|X_1:X_1 \rightarrow f'(u)(X)$ is a Banach space isomorphism. Then there exist open sets 
$$U_1 \subset Y_1 \oplus X_2, \quad U_2 \subset X, \quad V_1 \subset Y,\quad V_2 \subset Y$$
and there are $C^r$-diffeomorphisms 
$$\varphi:V_1 \rightarrow V_2 \quad \text{and} \quad \psi:U_1 \rightarrow U_2$$ 
such that $(\varphi \circ f \circ \psi)(x,e)=(x,0)$.
\end{theorem}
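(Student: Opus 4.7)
The plan is to straighten the map $f$ by two successive changes of coordinates: first, absorb the $Y_1$-component of $f$ into the first set of variables via a local diffeomorphism $\psi$ on the source; then, subtract off the remaining $Y_2$-component, which will turn out to depend only on the first variables, via an affine diffeomorphism $\varphi$ on the target. After reducing to the standard form $u_0=0$, $f(u_0)=0$, I would define
\begin{equation*}
\alpha : (x,e) \mapsto \bigl( \pi_1 f(x,e),\, e\bigr), \quad x \in X_1,\ e \in X_2,
\end{equation*}
where $\pi_1:Y=Y_1\oplus Y_2 \to Y_1$ is the projection. Using that $X_2=\ker f'(0)$ and $f'(0)|_{X_1}:X_1\to Y_1$ is an isomorphism, one checks that $\alpha'(0,0)$ has the block form $(f'(0)|_{X_1},\,\Id_{X_2})$, hence is a Banach space isomorphism $X_1\oplus X_2 \to Y_1\oplus X_2$. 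The Inverse Function Theorem~\ref{thm:inversefunctiontheorem} then yields a local $C^r$-inverse $\psi:U_1 \to U_2$ with $U_1\subset Y_1\oplus X_2$.

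Write $f\circ\psi=(g_1,g_2)$ with $g_i \in Y_i$. By the very construction of $\alpha$ one has $g_1(x,e)=x$. The central step is to show that $g_2$ is independent of $e$, i.e.\ $\partial_e g_2\equiv 0$ in a neighbourhood of the origin. Here the constant rank hypothesis enters: since $\psi'(x,e)$ is an isomorphism, the image of $D(f\circ\psi)(x,e)$ equals the image of $f'(\psi(x,e))$, and by assumption this image equals $f'(\psi(x,e))(X_1)$, i.e.\ the image of the restriction of $D(f\circ\psi)(x,e)$ to $X_1$. Now $D(f\circ\psi)(x,e)|_{X_1}$ sends $h_1$ to $(h_1,\partial_x g_2(x,e)h_1)$. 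The vector $(0,\partial_e g_2(x,e) h_2)$ must lie in this graph, which forces its $Y_1$-component $0$ to come from $h_1=0$, and consequently $\partial_e g_2(x,e)h_2=0$ for every $h_2\in X_2$. Hence $g_2(x,e)=h(x)$ for some $C^r$ map $h$.

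Finally I would define
\begin{equation*}
\varphi(y_1,y_2) := \bigl(y_1,\, y_2 - h(y_1)\bigr),
\end{equation*}
which is a $C^r$-diffeomorphism on a neighbourhood $V_1$ of $0 \in Y$ (its inverse adds $h(y_1)$ back), and verify that $(\varphi\circ f\circ\psi)(x,e)=\varphi(x,h(x))=(x,0)$, as required.

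The main obstacle I anticipate is the infinite-dimensional subtlety in the rank-constancy step: one cannot invoke a dimension count, and must instead exploit precisely the hypothesis that $f'(u)|_{X_1}$ is a Banach space isomorphism onto $f'(u)(X)$ in a neighbourhood. Care is needed to conclude that a vector already known to lie in a closed graph must have zero first coordinate forcing zero second coordinate; the argument above handles this cleanly, but only because the splitting of the kernel at $u_0$ is preserved (as a complement) by the constant rank assumption. The remainder of the proof consists of routine verifications via the Inverse Function Theorem and elementary manipulations with block-triangular derivatives.
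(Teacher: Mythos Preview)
Your proposal is correct and follows essentially the same strategy as the paper: first straighten the $Y_1$-component via $\alpha(x,e)=(\pi_1 f(x,e),e)$ and its local inverse $\psi$, then use the constant-rank hypothesis to show the $Y_2$-component of $f\circ\psi$ is independent of $e$, and finally shear off that component with $\varphi$. Two small remarks: after applying $\psi$ the first factor is $Y_1$, not $X_1$, so your ``$D(f\circ\psi)(x,e)|_{X_1}$'' should read $|_{Y_1\times\{0\}}$ (the equality of this restricted image with $f'(\psi(x,e))(X_1)$ is justified by the block-triangular form of $\alpha'$, which sends $X_1\times\{0\}$ isomorphically onto $Y_1\times\{0\}$); and your explicit $\varphi(y_1,y_2)=(y_1,y_2-h(y_1))$ is exactly the inverse of the paper's auxiliary map $\tilde g$, so you save one invocation of the Inverse Function Theorem.
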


\begin{proof}\mbox{}\par
\begin{enumerate}[(1)]

\item \label{thm:constantranktheorem1}
Write $f=f_1 \times f_2$ with $f_i:U \rightarrow Y_i$ for $i=1,2$. Then 
$$\frac{\partial f_1}{\partial x_1}(u_0):X_1 \rightarrow Y_1$$
is a Banach space isomorphism. We define 
$$g:U \rightarrow Y_1 \oplus X_2, \quad g(x_1,x_2)=(f_1(x_1,x_2),x_2).$$
Then for $(h_{X_1},h_{X_2}) \in X_1 \oplus X_2$ we have
\begin{equation}\tag{$*$}\label{thm:constantranktheorem:eqn1}
g'(u)(h_{X_1},h_{X_2})=\left(\begin{array}{cc}\frac{\partial f_1}{x_1}(u) & \frac{\partial f_1}{x_2}(u) \\0 & \Id_{X_2}\end{array}\right) \left(\begin{array}{c}h_{X_1} \\ h_{X_2}\end{array}\right),
\end{equation}
and therefore $g'(u_0):X=X_1 \oplus X_2 \rightarrow Y_1 \oplus X_2$ is a Banach space isomorphism. So by the Inverse Function Theorem \ref{thm:inversefunctiontheorem} $g$ has a local inverse $\psi:U_1 \rightarrow U_2 \subset U$ of class $C^r$, $\psi^{-1}=g|U_2$. Then $(f_1 \circ \psi)(y_1,x_2)=y_1$ for $(y_1,x_2) \in U_1$.

\item \label{thm:constantranktheorem2}
We claim that 
$$(f \circ \psi)' (y_1,x_2) | Y_1 \times \{0\} : Y_1 \times \{0\} \rightarrow (f \circ \psi)'(y_1,x_2)(Y_1 \oplus X_2)$$
is a Banach space isomorphism for $(y_1,x_2) \in U_1$. To prove the claim, we first notice by looking at \eqref{thm:constantranktheorem:eqn1} that 
$$g'(u) | X_1 \times \{0\}:X_1 \times \{0\} \rightarrow Y_1 \times \{0\}$$
is a Banach space isomorphism. Therefore its inverse maps $Y_1 \times \{0\}$ bijectively to $X_1\times \{0\}$. Furthermore, the hypothesis of the theorem says that $f'(\psi(y_1,x_2))$ is a Banach space isomorphism when restricted to $X_1 \times \{0\}$, and the claim follows by the chain rule 
$$(f \circ \psi)'(y_1,x_2)=f'(\psi(y_1,x_2)) \circ \psi'(y_1,x_2).$$ 

\item \label{thm:constantranktheorem3}
Let $(h_{Y_1},h_{X_2}) \in Y_1 \oplus X_2$. Then because of (\ref{thm:constantranktheorem2}) there is $\tilde h_{Y_1}$ such that
\begin{equation} \tag{$**$} \label{thm:constantranktheorem:eqn2}
(f \circ \psi)'(y_1,x_2)(h_{Y_1},h_{X_2})=(f \circ \psi)'(y_1,x_2)(\tilde h_{Y_1},0).
\end{equation}
We see from (\ref{thm:constantranktheorem1}) that
$$(f_1 \circ \psi)'(y_1,x_2)(h_{Y_1},h_{X_2})=h_{Y_1},$$ 
so $\tilde h_{Y_1}$ equals $h_{Y_1}$. But then \eqref{thm:constantranktheorem:eqn2} implies that 
$$\frac{\partial f \circ \psi}{\partial x_2}(y_1,x_2)=0,$$
so $f \circ \psi$ does not depend on the variable $x_2$, and we can write 
$$\tilde f:P_{Y_1}(V) \subset Y_1 \rightarrow Y, \quad \tilde f(y_1) = f \circ \psi(y_1,x_2)$$ 
where $P_{Y_1}:Y_1 \oplus X_2 \rightarrow Y_1$ is the projection. 

\item \label{thm:constantranktheorem4}
Let $y_0=P_{Y_1}(\psi^{-1}(u_0))$. The derivative of $\tilde f$ at $y_0$ is injective and has closed split image $Y_1$ with closed complement $Y_2$. Define 
$$\tilde g:P_{Y_1}(V) \times Y_2 \subset Y_1 \oplus Y_2 \rightarrow Y_1 \oplus Y_2, \quad \tilde g(y_1,y_2)=\tilde f(y_1)+(0,y_2).$$
Then the derivative of $\tilde g$ at $(y_0,0)$ is a Banach space isomorphism, and therefore by the Inverse Function Theorem \ref{thm:inversefunctiontheorem} there is a $C^r$-diffeomorphism $\varphi:V_1 \rightarrow V_2$ with $V_1, V_2 \subset Y$ and $\varphi^{-1}=\tilde g | V_2$. Because $\tilde f(y_1)=\tilde g(y_1,0)$, we have
$$(\varphi \circ \tilde f)(y_1)=\varphi(\tilde g(y_1,0))=(y_1,0) \in Y_1 \oplus Y_2$$
for all $y_1$ such that $(y_1,0) \in V_2$. Therefore $(\varphi \circ f \circ \psi)(y_1,x_2)=(y_1,0)$. \qedhere
\end{enumerate}
\end{proof}

\subsubsection{Superposition operators} 

\begin{definition}\label{def:superpositionoperator} 
\index{superposition operator} \index{composition operator} \index{Nemytskij operator}
Let $f :\Omega \times X_1 \rightarrow X_2$ be a function. Then the superposition operator associated to $f$ is a mapping $F$ that assigns to each function $x : \Omega \rightarrow X_1$ a function $Fx: \Omega \rightarrow X_2$ defined as 
$$Fx(\omega)=f(\omega,x(\omega)).$$ 
$F$ is also called composition operator or Nemytskij operator. 
\end{definition}

The book of \citet{Appell} gives a very good overview of superposition operators acting on different spaces of functions. Our interest lies in the situation where $F$ acts on continuous functions. The results presented here are generalizations of \citet[chap.~6]{Appell}, where only the case $X_1=X_2=\mathbb R$ is treated. 

\begin{definition}
If $\Omega$ is a compact metric space and $X$ is a Banach space, then the set $C(\Omega; X)$ of continuous functions from $\Omega$ to $X$ is a Banach space when equipped with the norm 
\begin{equation*}
\norm{x}_{C(\Omega;X)}=\sup_{s \in \Omega} \norm{x(s)}_X.
\end{equation*}
\end{definition}

\begin{lemma}\label{lem:continuoussuperpositionoperator}
Let $X_1$ and $X_2$ be Banach spaces, $\Omega$ a compact metric space, $U$ an open subset of $X_1$, and $f:\Omega \times U \rightarrow X_2$.
If $f$ is continuous, then its associated superposition operator $F:C(\Omega;U) \rightarrow C(\Omega;X_2)$ is continuous.
\end{lemma}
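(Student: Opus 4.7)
The plan is to prove continuity of $F$ at an arbitrary $x_0 \in C(\Omega;U)$ by a compactness argument: although $f$ need not be uniformly continuous on all of $\Omega \times U$, it is uniformly continuous ``in a neighbourhood of the graph'' of $x_0$, and this is exactly what is needed to turn the pointwise continuity of $f$ into a sup-norm estimate. I would reduce matters to finitely many local estimates from the continuity of $f$, using compactness of $\Omega$ to patch them together into one uniform $\delta$.

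First I would fix $x_0 \in C(\Omega;U)$ and $\epsilon > 0$. For each $\omega \in \Omega$, continuity of $f$ at the point $(\omega, x_0(\omega))$ supplies constants $\alpha_\omega, \beta_\omega > 0$ (with $\beta_\omega$ chosen small enough that the ball $B_{X_1}(x_0(\omega), \beta_\omega)$ lies inside $U$) such that whenever $d_\Omega(\omega', \omega) < \alpha_\omega$ and $\norm{u - x_0(\omega)}_{X_1} < \beta_\omega$, one has $\norm{f(\omega', u) - f(\omega, x_0(\omega))}_{X_2} < \epsilon/2$. Shrinking $\alpha_\omega$ if necessary using continuity of $x_0$, I may also arrange that $d_\Omega(\omega', \omega) < \alpha_\omega$ forces $\norm{x_0(\omega') - x_0(\omega)}_{X_1} < \beta_\omega/2$. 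Compactness of $\Omega$ then yields a finite subcover by the balls $B_\Omega(\omega_i, \alpha_{\omega_i}/2)$, $i = 1,\dots,n$, and I would set $\delta := \tfrac{1}{2}\min_i \beta_{\omega_i}$.

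The claim is that $\norm{x - x_0}_{C(\Omega;X_1)} < \delta$ implies $\norm{Fx - Fx_0}_{C(\Omega;X_2)} < \epsilon$. Indeed, for any $\omega' \in \Omega$ I pick an index $i$ with $d_\Omega(\omega', \omega_i) < \alpha_{\omega_i}/2$; then the triangle inequality gives both $\norm{x(\omega') - x_0(\omega_i)}_{X_1} < \beta_{\omega_i}$ and $\norm{x_0(\omega') - x_0(\omega_i)}_{X_1} < \beta_{\omega_i}$, so the defining estimate above applies twice and yields $\norm{f(\omega', x(\omega')) - f(\omega', x_0(\omega'))}_{X_2} < \epsilon$ after one more triangle inequality. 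Taking the sup over $\omega'$ finishes the proof.

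I do not expect a serious obstacle; the argument is standard. The only care needed is bookkeeping: several of the constants ($\beta_{\omega_i}$ small enough to stay inside $U$, $\alpha_{\omega_i}$ small enough to control both $f$ and $x_0$, and a common $\delta$ extracted from the finite cover) must be threaded together so that a single uniform $\delta$ works across all $\omega' \in \Omega$.
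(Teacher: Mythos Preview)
Your argument is correct and is essentially the same as the paper's: both exploit compactness to upgrade pointwise continuity of $f$ near the graph of $x_0$ to a uniform $\delta$, then route the estimate through finitely many ``centre'' values $f(\omega_i,x_0(\omega_i))$ via the triangle inequality. The only cosmetic difference is that the paper works directly in $\Omega\times U$ with the product metric $\max(d(\omega_1,\omega_2),\norm{x_1-x_2})$ and covers the compact graph of $x_0$ by metric balls, whereas you cover $\Omega$ and carry two separate radii $\alpha_\omega,\beta_\omega$; the bookkeeping is equivalent.
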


\begin{proof}
If $f$ is continuous, then clearly $F$ maps continuous functions to continuous functions. It remains to show that $F$ is continuous at every $x \in C(\Omega;U)$. Because $\Omega$ is compact and $x$ is continuous, the graph of $x$ is a compact subspace of $\Omega \times U$ equipped with the product metric 
\begin{displaymath}
d \left( (\omega_1,x_1),(\omega_2,x_2) \right) = \max \left(d(\omega_1,\omega_2), \norm{x_1-x_2}_{X_1} \right).
\end{displaymath} 
$f$ is continuous, so around every point $p$ in the graph of $x$ we find a metric ball $B_{\delta(p)}(p)$ with radius $\delta(p)$ such that whenever $q$ lies in this ball, we have 
$$\norm{f(p)-f(q)}_{X_2} < \epsilon/2.$$ 
Finitely many balls $B_{\delta(p)/2}(p)$ cover the graph of $x$, say the balls $B_{\delta(p_i)/2}(p_i)$ for $i=1, \dots, n$. Let $\delta$ be the smallest radius of these balls, i.e. 
\begin{displaymath}
\delta=\min \left\{\delta(p_i)/2: i=1, \dots, n\right\}.
\end{displaymath} 
Now we claim that if $y$ is a function in $C(\Omega;U)$, we have
$$\norm{x-y}_{C(\Omega;X_1)} < \delta \ \Longrightarrow \ \norm{Fx-Fy}_{C(\Omega;X_2)} < \epsilon.$$ 
To prove this claim, take a point $p_y=(\omega,y(\omega))$ in the graph of $y$. Because $p_x=(\omega,x(\omega))$ lies in the graph of $x$ and the graph of $x$ is covered by the balls $B_{\delta(p_i)/2}(p_i)$, there is a point $p_{i_0}$ in the graph of $x$ such that $d(p_x, p_{i_0})< \delta(p_{i_0})/2$. Now, 
\begin{displaymath}
d(p_y,p_{i_0}) \leq d(p_y,p_x)+d(p_x,p_{i_0}) < \delta + \delta(p_{i_0})/2 \leq \delta(p_{i_0}).
\end{displaymath} 
So both $p_y$ and $p_x$ lie in the same ball $B_{\delta(p_{i_0})}(p_{i_0})$. Therefore 
\begin{displaymath}
\norm{f(p_x)-f(p_y)}_{X_2} \leq \norm{f(p_x)-f(p_{i_0})}_{X_2} + \norm{f(p_{i_0})-f(p_y)}_{X_2} < \epsilon/2 + \epsilon/2 = \epsilon.
\end{displaymath}
Since this is true for all points $p_x=(\omega,x(\omega))$ and $p_y=(\omega,y(\omega))$, $\omega \in \Omega$, we get the desired inequality $\norm{Fx-Fy}_{C(\Omega;X_2)}<\epsilon$.
\end{proof}

\begin{lemma}\label{lem:differentiablesuperpositionoperator}
Let $X_1$ and $X_2$ be Banach spaces, $\Omega$ a compact metric space, $U$ an open subset of $X_1$, and $f:\Omega \times U \rightarrow X_2$.
If $f$ is continuous and has a continuous partial derivative $\frac{\partial f}{\partial x}$, then its associated superposition operator $F:C(\Omega;U) \rightarrow C(\Omega;X_2)$ is of class $C^1$. 
\end{lemma}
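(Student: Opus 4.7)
The plan is to identify the Fréchet derivative of $F$ at $x \in C(\Omega; U)$ as the bounded linear operator $F'(x): C(\Omega; X_1) \to C(\Omega; X_2)$ given pointwise by
$$(F'(x) h)(\omega) = \frac{\partial f}{\partial x}(\omega, x(\omega))\, h(\omega),$$
and then to verify two things: (i) this really is the Fréchet derivative of $F$ at every $x$, and (ii) the map $x \mapsto F'(x)$ is continuous into the Banach space $L(C(\Omega; X_1), C(\Omega; X_2))$. Boundedness of $F'(x)$ itself is immediate, since $\omega \mapsto x(\omega)$ has compact image in $U$ and $\frac{\partial f}{\partial x}$ is continuous, so $\omega \mapsto \frac{\partial f}{\partial x}(\omega, x(\omega))$ is bounded in $L(X_1,X_2)$.

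For (i) I would apply the fundamental theorem of calculus in Banach spaces: for $h$ small enough that $x + th$ takes values in $U$ for all $t \in [0,1]$, one has
$$F(x+h)(\omega) - F(x)(\omega) - (F'(x) h)(\omega) = \int_0^1 \left[\frac{\partial f}{\partial x}(\omega, x(\omega) + t h(\omega)) - \frac{\partial f}{\partial x}(\omega, x(\omega))\right] h(\omega) \, \ud t.$$
Taking the $X_2$-norm inside the integral and then the supremum over $\omega$ yields
$$\norm{F(x+h) - F(x) - F'(x) h}_{C(\Omega; X_2)} \leq \norm{h}_{C(\Omega; X_1)} \cdot \sup_{\omega \in \Omega,\, t \in [0,1]} \norm{\frac{\partial f}{\partial x}(\omega, x(\omega) + t h(\omega)) - \frac{\partial f}{\partial x}(\omega, x(\omega))}.$$

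The main obstacle is then to show that this supremum tends to zero as $\norm{h} \to 0$. For this I would exploit compactness: for any sufficiently small $\delta > 0$, the set $\{(\omega, x(\omega) + t h(\omega)) : \omega \in \Omega,\ t \in [0,1],\ \norm{h}_{C(\Omega;X_1)} \leq \delta\}$ sits in a fixed compact subset of $\Omega \times U$ (a closed $\delta$-tube around the compact graph of $x$), and on this compact set the continuous function $\frac{\partial f}{\partial x}$ is uniformly continuous. Since $\norm{h} \to 0$ forces the second argument of the increment uniformly close to $x(\omega)$, the supremum vanishes, establishing Fréchet differentiability at $x$ with derivative $F'(x)$.

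For (ii) I would apply Lemma \ref{lem:continuoussuperpositionoperator} with the roles of $X_2$ played by the Banach space $L(X_1, X_2)$: the continuous map $\frac{\partial f}{\partial x}: \Omega \times U \to L(X_1, X_2)$ induces a continuous superposition operator $G: C(\Omega; U) \to C(\Omega; L(X_1, X_2))$. The derivative $F'(x)$ is then the image of $G(x)$ under the bounded linear ``pointwise multiplication'' map $M: C(\Omega; L(X_1, X_2)) \to L(C(\Omega; X_1), C(\Omega; X_2))$ sending $A$ to $h \mapsto \bigl(\omega \mapsto A(\omega) h(\omega)\bigr)$; the estimate $\sup_\omega \norm{A(\omega) h(\omega)} \leq \norm{A}_{C(\Omega; L(X_1, X_2))} \cdot \norm{h}_{C(\Omega; X_1)}$ shows $\norm{M(A)} \leq \norm{A}$, so $M$ is continuous. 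Hence $x \mapsto F'(x) = M(G(x))$ is continuous as a composition of continuous maps, which together with (i) proves $F \in C^1$.
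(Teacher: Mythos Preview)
Your proof is correct and follows essentially the same route as the paper: the same candidate derivative $F'(x)=M(G(x))$, the same integral remainder formula, and the same factorization through the continuous superposition operator $G$ associated to $\frac{\partial f}{\partial x}$ and the bounded linear multiplication map $M$. The only cosmetic difference is that where you argue directly via uniform continuity on a compact tube to show the supremum tends to zero, the paper observes that this supremum is exactly $\sup_{0\leq t\leq 1}\norm{G(x+th)-G(x)}_{C(\Omega;L(X_1,X_2))}$ and invokes the continuity of $G$ already established by Lemma~\ref{lem:continuoussuperpositionoperator}---which is the same uniform-continuity argument packaged once and reused.
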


\begin{proof}
We claim that the Fr\'{e}chet derivative of $F$ at $x \in C(\Omega; U)$ is given by
\begin{displaymath}
F'(x)(h)(\omega) = \frac{\partial f}{\partial x} (\omega, x(\omega)) h(\omega)\ \text{ or equivalently } \ F'(x) = M(G(x)),
\end{displaymath} 
where M and G are defined as follows:
\begin{displaymath}
M : C(\Omega;L(X_1;X_2)) \rightarrow L(C(\Omega;X_1); C(\Omega;X_2)), \quad M(g)(h)(\omega)=g(\omega)(h(\omega)).
\end{displaymath}
It is easy to see that $M$ is a well-defined bounded linear operator. Therefore $M$ is smooth. 
\begin{displaymath}
G : C(\Omega;U)  \rightarrow  C(\Omega;L(X_1;X_2))
\end{displaymath} 
is the superposition operator associated to the continuous function 
\begin{displaymath}
\frac{\partial f}{\partial x}: \Omega \times U \rightarrow L(X_1;X_2).
\end{displaymath}
By Lemma \ref{lem:continuoussuperpositionoperator} we know that $G$ is well-defined and continuous. To prove that $M(G(x))$ really is the Fr\'{e}chet derivative of $F$ at $x$, we have to show that
\begin{enumerate}[(1)]
\item \label{lem:differentiablesuperpositionoperator1}
For all $x \in C(\Omega;U)$, $M(G(x))$ is a bounded linear operator from $C(\Omega;X_1)$ to $C(\Omega;X_2)$.
\item  \label{lem:differentiablesuperpositionoperator2}
$\norm{F(x+h)-F(x)-M(G(x))(h)}_{C(\Omega;X_2)} = o \left(\norm{h}_{C(\Omega;X_1)}\right)$.
\end{enumerate}
(\ref{lem:differentiablesuperpositionoperator1}) is clear from the definition of $M$. (\ref{lem:differentiablesuperpositionoperator2}) follows from the Taylor formula for $f$:
\begin{multline*}
\norm{F(x+h) -F(x)-M(G(x))(h)}_{C(\Omega;X_2)} = \\
\begin{split}
&=\sup_{\omega \in \Omega} \norm{f(\omega,x(\omega)+h(\omega))-f(\omega,x(\omega))-
\frac{\partial f}{\partial x} (\omega, x(\omega)) h(\omega)}_{X_2}  \\
&=\sup_{\omega \in \Omega} \norm{\int_0^1 \frac{\partial f}{\partial x} (\omega, x(\omega)+t h(\omega)) h(\omega)\thinspace dt -
\frac{\partial f}{\partial x} (\omega, x(\omega)) h(\omega)}_{X_2}  \\
&=\sup_{\omega \in \Omega} \norm{\int_0^1 \left(\frac{\partial f}{\partial x} (\omega, x(\omega)+t h(\omega))-\frac{\partial f}{\partial x} (\omega, x(\omega)) \right)h(\omega)\medspace dt}_{X_2} \\
&\leq \sup_{\omega \in \Omega} \sup_{0\leq t \leq 1} \norm{\frac{\partial f}{\partial x} (\omega, x(\omega)+t h(\omega))-\frac{\partial f}{\partial x} (\omega, x(\omega)) }_{L(X_1;X_2)} \norm{h(\omega)}_{X_1}  \\
&\leq \norm{h}_{C(\Omega;X_1)} \sup_{0 \leq t \leq 1} \norm{G(x+t h)-G(x)}_{C(\Omega;L(X_1;X_2))}\\
&=o \left(\norm{h}_{C(\Omega;X_1)}\right), \text{ because $G$ is continuous.}
\end{split}
\end{multline*}
It remains to show that $x \mapsto F'(x)$ is continuous. This is clear since $G$ is continuous and $M$ is smooth. 
\end{proof}

\begin{theorem}\label{thm:smoothsuperpositionoperator}
Let $X_1$ and $X_2$ be Banach spaces, $\Omega$ a compact metric space, $U$ a compact subspace of $X_1$ and $f:\Omega \times U \rightarrow X_2$.
If $f$ is continuous and has continuous partial derivatives $\frac{\partial^k f}{\partial x^k}$ for $0 \leq k \leq n$, then its associated superposition operator $F:C(\Omega;U) \rightarrow C(\Omega;X_2)$ is of class $C^n$. 
\end{theorem}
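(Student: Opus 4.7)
The plan is to prove the theorem by induction on $n$, with Lemma \ref{lem:continuoussuperpositionoperator} supplying the case $n=0$ and Lemma \ref{lem:differentiablesuperpositionoperator} the case $n=1$. No additional argument is needed for these base cases, so the work is concentrated in the inductive step.

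For the inductive step I assume the result already holds at order $n-1$ for arbitrary pairs of Banach spaces in place of $(X_1, X_2)$, and I take $f$ with continuous partial derivatives in its second argument up to order $n \geq 2$. The key leverage comes from the explicit formula derived in Lemma \ref{lem:differentiablesuperpositionoperator}, which expresses the Fr\'echet derivative of $F$ as $F' = M \circ G$, where $M:C(\Omega;L(X_1;X_2)) \to L(C(\Omega;X_1); C(\Omega;X_2))$ is a bounded linear (hence smooth) evaluation map, and $G: C(\Omega;U) \to C(\Omega;L(X_1;X_2))$ is the superposition operator associated with $\partial f/\partial x$. Now the map $\partial f/\partial x : \Omega \times U \to L(X_1; X_2)$ is continuous and possesses continuous partial derivatives with respect to $x$ up to order $n-1$. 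Invoking the inductive hypothesis, with the Banach space $L(X_1;X_2)$ playing the role of $X_2$, shows that $G$ is of class $C^{n-1}$. Since composition with the smooth map $M$ preserves $C^{n-1}$ regularity, $F' = M \circ G$ is of class $C^{n-1}$, and hence $F$ is of class $C^n$.

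The only point that requires care is to state the induction in sufficient generality. As the argument iterates, the target space changes from $X_2$ to $L(X_1; X_2)$, then to $L(X_1; L(X_1; X_2)) \cong L^2(X_1; X_2)$, and so on; the inductive hypothesis must therefore be formulated uniformly over pairs of Banach spaces at the outset. This is not a genuine obstacle, but it is the one place where the induction could appear to stall if one fixed $X_2$ in advance. Everything else — well-definedness of $G$ as a map into $C(\Omega; L(X_1; X_2))$, smoothness of the evaluation map $M$, and the identification of $\partial^k(\partial f/\partial x)/\partial x^k$ with $\partial^{k+1} f / \partial x^{k+1}$ taking values in $L^{k+1}(X_1;X_2)$ — is routine bookkeeping once the correct target space is tracked at each stage.
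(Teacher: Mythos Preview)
Your proof is correct and follows essentially the same inductive scheme as the paper: base cases via Lemmas~\ref{lem:continuoussuperpositionoperator} and~\ref{lem:differentiablesuperpositionoperator}, then the inductive step using $F' = M \circ G$ with $G$ the superposition operator for $\partial f/\partial x$. Your explicit remark that the induction must be formulated uniformly over pairs of Banach spaces (since the target becomes $L(X_1;X_2)$ at each step) is a point the paper leaves implicit, relying on the generic statement of the theorem.
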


\begin{proof}
The case $n=0$ has been proven in Lemma \ref{lem:continuoussuperpositionoperator}, and the case $n=1$ in Lemma \ref{lem:differentiablesuperpositionoperator}. We will prove the theorem by induction. For the inductive step, let $f$ have continuous partial derivatives for $1 \leq k \leq n+1$. By Lemma \ref{lem:differentiablesuperpositionoperator}, $F$ is of class$C^1$ and its derivative is given by $F'=M \circ G$. $G$ is the superposition operator associated to the function $\frac{\partial f}{\partial x}$, which is continuous and has $n$ continuous partial derivatives. By the inductive assumption, $G$ is of class $C^n$. Because $M$ is $C^\infty$, $F'=M \circ G$ is of class $C^n$. So $F$ is of class $C^{n+1}$. 
\end{proof}

\subsubsection{The endpoint map} \label{sec:endpointmap} 

In all of section \ref{sec:endpointmap}, we will use the following assumptions: 
\begin{enumerate}[(a)]
\item \label{endpointmap:assumption1} 
$M$ is an open connected subset of $\mathbb R^n$. 
\item \label{endpointmap:assumption2} 
$H$ is spanned by a finite number of vectorfields $X_1, \dots, X_k \in \mathfrak X(M)$.
\item \label{endpointmap:assumption3}  
There is $K \in \mathbb R$ such that $\norm{X_i(x)}_{\mathbb R^n} \leq K$ for all $x \in M$. 
\item \label{endpointmap:assumption4}  
There is $L \in \mathbb R$ such that $\norm{X_i(x)-X_i(y)}_{\mathbb R^n} \leq L$ for all $x,y \in M$. 
\end{enumerate}

\begin{theorem} \label{thm:controlledcurve}
Under the assumptions (\ref{endpointmap:assumption1}) to (\ref{endpointmap:assumption4}) there is an open set $U$, 
$$U \subset L^1([0,1];\mathbb R^k) \times M, \quad \{0\} \times M \subset U$$ 
such that for each $(u,x) \in U$ the equation 
\begin{equation}\tag{$*$}\label{eqn:controlledcurve} 
\dot \gamma(t)=\sum_{i=1}^k u_i(t) X_i(\gamma(t)), \quad \gamma(0)=x
\end{equation}
has a unique solution $\gamma \in C([0,1];M)$ which depends smoothly on $u$ and $x$. 
\end{theorem}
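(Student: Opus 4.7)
The plan is to recast the ODE as a fixed-point equation and apply the parametric Banach Fixed Point Theorem \ref{thm:banach2}, which delivers existence, uniqueness and smooth dependence simultaneously. Concretely, I would rewrite \eqref{eqn:controlledcurve} as $\gamma = F(\gamma,u,x)$ where
\begin{equation*}
F(\gamma,u,x)(t) = x + \int_0^t \sum_{i=1}^k u_i(s)\, X_i(\gamma(s))\,\ud s,
\end{equation*}
so that a continuous solution of \eqref{eqn:controlledcurve} is precisely a fixed point of $F(\cdot,u,x)$ in an appropriate subset of $C([0,1];\mathbb R^n)$.

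The first step is to localize, both to keep the curve inside $M$ and to have a clean setting for the superposition operator Theorem \ref{thm:smoothsuperpositionoperator}. Given $x_0 \in M$, I would fix a compact neighbourhood $V \subset M$ of $x_0$ and an $r > 0$ such that the closed ball of radius $r$ around every point of $B_{r/2}(x_0)$ is still contained in $V$. Restricting to parameters $(u,x)$ with $x \in B_{r/2}(x_0)$ and $\norm{u}_{L^1} < r/(2K)$, assumption (\ref{endpointmap:assumption3}) forces any candidate curve to satisfy $\norm{\gamma(t) - x} \leq K\norm{u}_{L^1}$, and hence to remain inside the interior of $V$. This localization also makes the eventual set $U$ manifestly open.

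Next I would verify the hypotheses of Theorem \ref{thm:banach2} on the closed subset of $C([0,1];\mathbb R^n)$ consisting of curves taking values in $V$. The contraction estimate in $\gamma$ follows directly from the Lipschitz bound in (\ref{endpointmap:assumption4}):
\begin{equation*}
\norm{F(\gamma_1,u,x) - F(\gamma_2,u,x)}_\infty \leq L\, \norm{u}_{L^1}\, \norm{\gamma_1 - \gamma_2}_\infty,
\end{equation*}
so shrinking $\norm{u}_{L^1}$ further gives a uniform contraction constant strictly below one. For smoothness in $(\gamma,u,x)$ I would factor $F$ into smooth pieces: the map $\gamma \mapsto (t \mapsto X_i(\gamma(t)))$ is smooth from a neighbourhood of $C([0,1];V)$ into $C([0,1];\mathbb R^n)$ by Theorem \ref{thm:smoothsuperpositionoperator}; pointwise multiplication by $u_i$ is a continuous bilinear map $L^1 \times C \to L^1$; the integration operator $L^1([0,1];\mathbb R^n) \to C([0,1];\mathbb R^n)$ is bounded linear; and the dependence on $x$ is affine. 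Each piece is smooth, hence so is the composite $F$.

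With these ingredients in place, Theorem \ref{thm:banach2} produces a unique fixed point $\gamma^*(u,x)$ depending smoothly on the parameters $(u,x)$ in each local chart of parameter space. Carrying out this construction around every $x_0 \in M$ and taking the union of the resulting open admissible parameter sets yields an open $U \subset L^1([0,1];\mathbb R^k) \times M$ containing $\{0\} \times M$, together with a smooth family of solutions on $U$; compatibility on the overlaps is automatic from the uniqueness part of Theorem \ref{thm:banach2}. I expect the main obstacle to be the careful bookkeeping between the sup-norm on curves and the $L^1$-norm on controls, in particular checking that the superposition theorem genuinely applies in the form needed and that each factor in the factorization of $F$ truly lands in the advertised space; once this is done, the contraction estimate and the parametric fixed-point theorem do the rest.
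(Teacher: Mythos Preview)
Your proposal is correct and follows the same strategy as the paper: recast the ODE as an integral fixed-point equation $\gamma = F(\gamma,u,x)$, verify smoothness of $F$ via the superposition operator theorem together with the bounded linearity of integration and multiplication, and then invoke the parametric Banach fixed-point theorem~\ref{thm:banach2}. The only substantive difference is that the paper carries out the contraction argument around an \emph{arbitrary} existing solution $(\gamma_0,u_0,x_0)$ rather than only around the zero control, and then removes the smallness restriction on $\norm{u_0}_{L^1}$ by subdividing $[0,1]$ into pieces on which the control has small $L^1$-norm and concatenating the local solutions; this yields a strictly larger domain $U$, which is what the subsequent analysis of the endpoint map (Lemma~\ref{lem:normalcontrol}, Theorem~\ref{thm:accessiblesetandendpointmap}) actually relies on, though for the theorem as literally stated your neighbourhood of $\{0\}\times M$ already suffices.
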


\begin{proof}\indent\par
\begin{enumerate}[(1)]

\item \label{thm:controlledcurve:step1} For the moment, let us work on an arbitrary interval $[a,b]$ contained in $[0,1] \subset \mathbb R$. We need some conventions about the norms used. Define 
\begin{equation*}\begin{split}
\norm{x}_{\mathbb R^n}&=\sum_{i=1}^n \abs{x_i}\\
\norm{u}_{L^1([a,b];\mathbb R^k)}&=\sum_{i=1}^k \int_a^b \abs{u_i(s)} \ud s \\
\norm{u}_{C([a,b];\mathbb R^k)} &=\sum_{i=1}^k \max_{s \in [a,b]} \abs{u_i(s)}.
\end{split}\end{equation*}

\item \label{thm:controlledcurve:step2} Let $I$ denote the integral operator 
\begin{equation*}
I: \begin{cases}
L^1([a,b];\mathbb R^k) & \rightarrow  \quad C([a,b];\mathbb R^k) \\
u & \mapsto  \quad (t \mapsto \int_0^t u(s) \ud s).
\end{cases}
\end{equation*}
Then $I$ is a linear operator with operator norm equal to one, so $I$ is smooth. Define
\begin{equation*}
F: \begin{cases}
C([a,b];M) \times L^1([a,b];\mathbb R^k) \times M & \rightarrow  \quad C([a,b];\mathbb R^n) \\
(\gamma, u, x) & \mapsto  \quad x + I(\sum_{i=1}^k u_i\ X_i \circ \gamma).
\end{cases}
\end{equation*}
Note that $F$ is defined on an open set because $C([a,b];M)$ is open in $C([a,b];\mathbb R^n)$ whenever $M \subset \mathbb R^n$ is open. By Theorem \ref{thm:smoothsuperpositionoperator}, $F$ is smooth. 

\item \label{thm:controlledcurve:step3}We can rewrite the differential equation \eqref{eqn:controlledcurve} as an equivalent integral equation $\gamma=F(\gamma,u,x)$. 

\item \label{thm:controlledcurve:step4} Let $\gamma_0=F(\gamma_0,u_0,x_0)$ be a solution on the interval $[a,b]$ with $(\gamma_0, u_0, x_0)$ in the domain of definition of $F$. We claim that under the assumption 
$$\norm{u_0}_{L^1([a,b];\mathbb R^k)}<\frac{1}{3 L}$$ 
we can find a neighbourhood $U$ of $(u_0,x_0)$ in $L^1([a,b];\mathbb R^k) \times M$ such that  for all $(u,x)\in U$ the equation $\gamma=F(\gamma,u,x)$ has a unique solution $\gamma \in C([a,b];M)$. To prove the claim, we choose $\epsilon >0$ small enough such that $B(\epsilon, \gamma_0) \subset C([a,b];M)$.  Then we define 
\begin{equation*}
G: \begin{cases}
\overline{B(\epsilon, \gamma_0)} \times \left( B(\frac{\epsilon}{3 K},u_0) \cap B(\frac{1}{3 L},0) \right) \times B(\frac{\epsilon}{3},x_0)  & \rightarrow  \quad \overline{B(\epsilon, \gamma_0)}\\
(\gamma, u, x) & \mapsto  \quad F(x,\gamma,u).
\end{cases}
\end{equation*}
First, we show that the image of $G$ really is a subset of $\overline{B(\epsilon, \gamma_0)}$. 
\begin{equation*}\begin{split}
&\norm{G(\gamma,u,x)-\gamma_0}_{C([a,b];\mathbb R^n)} = \norm{G(\gamma,u,x)-G(\gamma_0,u_0,x_0)}_{C([a,b];\mathbb R^n)}\\
{}
&\quad \leq \norm{x-x_0}_{\mathbb R^n} + 
\norm{ \sum_{i=1}^k u_i\ X_i \circ \gamma - u_{0,i}\ X_i \circ \gamma_0}_{L^1([a,b];\mathbb R^n)}\\ 
{}
&\quad \leq \norm{x-x_0}_{\mathbb R^n} +
\sum_{i=1}^k \norm{u_i\ X_i \circ \gamma - u_{0,i}\ X_i \circ \gamma}_{L^1([a,b];\mathbb R^n)}+\\
&\quad \quad \quad \quad \quad \quad \quad\ +\sum_{i=1}^k \norm{u_{0,i}\ X_i \circ \gamma - u_{0,i}\ X_i \circ \gamma_0}_{L^1([a,b];\mathbb R^n)}\\
{}
&\quad \leq \norm{x-x_0}_{\mathbb R^n} + \norm{u-u_0}_{L^1([a,b];\mathbb R^k)} K + \norm{u_0}_{L^1([a,b];\mathbb R^k)} L \norm{\gamma-\gamma_0}_{C([a,b];\mathbb R^n)}\\
{}
&\quad \leq \frac{\epsilon}{3}+\frac{\epsilon}{3}+\frac{\epsilon}{3}=\epsilon.
\end{split}\end{equation*}

Furthermore, $G$ satisfies the contraction property
\begin{equation*}\begin{split}
\norm{ G(\gamma,u,x)-G(\tilde \gamma,u,x)}_{C([a,b];\mathbb R^n)} 
\leq \sum_{i=1}^k \norm{u_i (X_i \circ \gamma - X_i \circ \tilde\gamma)}_{L^1([a,b];\mathbb R^n)} \\
\leq \norm{u}_{L^1([a,b];\mathbb R^k)} L \norm{\gamma - \tilde\gamma}_{C([a,b];\mathbb R^n)} \leq \frac{1}{3} \norm{\gamma - \tilde\gamma}_{C([a,b];\mathbb R^n)}
\end{split}\end{equation*}
for all $(\gamma,u,x)$ and $(\tilde \gamma,u,x)$ in the domain of $G$. Therefore we can apply the Banach Fixed Point Theorem \ref{thm:banach2} to $G$, and this proves our claim with 
$$U= \left( B(\frac{\epsilon}{3 K},u_0) \cap B(\frac{1}{3 L},0) \right) \times B(\frac{\epsilon}{3},x_0).$$ 

\item \label{thm:controlledcurve:step5} Now we will work on the interval $[0,1]$ and drop the assumption that 
$$\norm{u_0}_{L^1([0,1];\mathbb R^k)}<\frac{1}{3 L}.$$ 
Let $\gamma_0=F(\gamma_0,u_0,x_0)$ be a solution on the interval $[0,1]$ with $(\gamma_0, u_0, x_0)$ in the domain of definition of $F$. We claim that we can find a neighbourhood $U$ of $(u_0,x_0)$ in $L^1([0,1];\mathbb R^k) \times M$ such that  for all $(u,x)\in U$ the equation $\gamma=F(\gamma,u,x)$ has a unique solution $\gamma \in C([0,1];M)$. To prove the claim, we choose $\epsilon >0$ small enough such that $B(\epsilon, \gamma_0) \subset C([a,b];M)$. Let $0=t_0<\dots<t_N=1$ such that on each interval $[t_j,t_{j+1}]$ 
$$\norm{u_0|_{[t_j,t_{j+1}]}}_{L^1([t_j,t_{j+1}];\mathbb R^n)}<\frac{1}{3 L}$$
holds. Then we can apply the result obtained in (\ref{thm:controlledcurve:step4}) to each of the intervals $[t_j,t_{j+1}]$ with 
$$a=t_j, b=t_{j+1}, \gamma_0=\gamma_0|_{[t_j,t_{j+1}]}, u_0=u_0|_{[t_j,t_{j+1}]}, x_0=\gamma(t_j).$$ 
We want to concatenate the solutions on these intervals to get a solution on $[0,1]$, so we have to make sure that the endpoint of a solution curve on $[t_j,t_{j+1}]$ lies in $B(\frac{\epsilon}{3},\gamma(t_{j+1}))$. This is the fact for 
$$U=\left(B(\frac{\epsilon}{3^N},u_0) \cap B(\frac{1}{3 L},0) \right) \times B(\frac{\epsilon}{3^N},x_0).$$

The solution on each interval $[t_j,t_{j+1}]$ depends smoothly on the control and initial value. Because the operator assigning to each curve its endpoint is a linear, continuous operator $C([t_j,t_{j+1}];M) \rightarrow M$, it is smooth, and therefore the concatenation of the solutions curves depends smoothly on $x_0$ and $u_0$. \qedhere

\end{enumerate}
\end{proof}

The remaining part of this section is taken from \citet{Bellaiche}. 

\begin{definition} \index{endpoint map} \index{controlled vectorfield} \index{evolution of controlled vectorfield}
In the situation of Theorem \ref{thm:controlledcurve}, we define
$$\Evol^u_t(x)=\gamma(t) \quad \text{and} \quad \End_x(u)=\gamma(1)$$ 
for $t \in [0,1]$, $(u,x)\in U$ and a curve $\gamma \in C([0,1];M)$ with initial value $x$ controlled by $u$. We will call $\End_x$ the endpoint map and $\Evol^u_t$ the evolution of the controlled vectorfield $\sum_{i=1}^k u_i X_i$.  
\end{definition}

\begin{remark}\label{rem:Endpoint}
$\End_x$ is smooth for each $x \in M$. This is true because the evaluation operator 
$$C([0,1];\mathbb R^n) \rightarrow \mathbb R^n, \quad \gamma \mapsto \gamma(1)$$ 
is linear and bounded, and therefore smooth, and because furthermore $\gamma$ depends smoothly on its control $u$ and initial value $x$ by Theorem \ref{thm:controlledcurve}. 
\end{remark}

\begin{definition} 
\index{control ! concatenation} \index{control ! inverse} \index{inverse control} \index{concatenation of controls}
For $u,v \in L^1([0,1];\mathbb R^k)$ and $0<s<1$, let $u *_s v \in L^1([0,1];\mathbb R^k)$ be the control function defined by
\begin{equation*}
(u *_s v)(t) = 
\begin{cases}
\frac{1}{s}\ u(\frac{t}{s}) & \text{if $0 \leq t <s$}\\
\frac{1}{1-s}\ v(\frac{t-s}{1-s}) & \text{if $s \leq t \leq 1$.}
\end{cases}
\end{equation*}
In many cases we will use $s=\frac{1}{2}$, so we define $u*v=u*_{\frac{1}{2}}v$ for the sake of a simpler notation. Furthermore, we define 
\begin{equation*}
\check u (t) = - u(1-t) \quad \text{for $0 \leq t \leq 1$}.
\end{equation*}
$u*_s v$ will be called the concatenation of $u$ and $v$, and $\check u$ the inverse control of $u$. 
\end{definition}

\begin{remark} 
It is easy to check the following properties:
\begin{enumerate}[(1)]
\item $\norm{u*_s v}_{L^1([0,1];\mathbb R^k)}=\norm{u}_{L^1([0,1];\mathbb R^k)}+\norm{v}_{L^1([0,1];\mathbb R^k)}$. 
\item $\Evol^{u*_s v}_1=\Evol_1^v \circ \Evol_1^u$
\item $\Evol_1^u$ is a local diffeomorphism with inverse $\Evol_1^{\check u}$. 
\end{enumerate}
\end{remark}

\begin{definition} \label{def:normalcontrol} \index{normal control} \index{control ! normal}
Let $r$ denote the maximal rank of $\End_x$. A control $u \in U$ is called normal if the rank of $\End_x$ at $u$ is equal to $r$.
\end{definition}

\begin{lemma}\label{lem:normalcontrol}
If $u$ is a normal control, so is $u *_s v$ for any $0<s<1$.
\end{lemma}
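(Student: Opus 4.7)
The plan is to reduce the rank at $u *_s v$ to the rank at $u$ by exploiting the composition formula $\Evol^{u*_s v}_1 = \Evol_1^v \circ \Evol_1^u$ and the fact that $\Evol_1^v$ is a local diffeomorphism. Since $r$ is by definition the maximal rank of $\End_x$, it suffices to show that the rank of $\End_x$ at $u *_s v$ is at least $r$; equality then comes for free from maximality.

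First I would introduce the continuous linear embedding $A : L^1([0,1];\mathbb R^k) \to L^1([0,1];\mathbb R^k)$ that places a perturbation $h$ of $u$ into the first slot of the concatenation, namely $(Ah)(t) = \tfrac{1}{s}\, h(t/s)$ for $t \in [0,s]$ and $(Ah)(t) = 0$ for $t \in [s,1]$, so that $u *_s v + Ah = (u+h) *_s v$. Using the composition formula from the remark, this yields the identity
\begin{equation*}
\End_x\bigl( u *_s v + Ah \bigr) = \Evol^v_1\bigl( \End_x(u+h) \bigr)
\end{equation*}
for all sufficiently small $h$, where $\Evol^v_1$ is, again by the remark, a local diffeomorphism defined near $\End_x(u)$. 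Differentiating this identity at $h = 0$ and applying the chain rule gives
\begin{equation*}
\End_x'(u *_s v) \circ A = T_{\End_x(u)} \Evol^v_1 \circ \End_x'(u).
\end{equation*}

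Because $T_{\End_x(u)} \Evol^v_1$ is a vector space isomorphism, the right-hand side has the same rank as $\End_x'(u)$, which is $r$ by normality of $u$. Hence the image of $\End_x'(u *_s v)$ contains a subspace of dimension $r$, so $\End_x'(u *_s v)$ has rank at least $r$. Combined with the maximality of $r$, this forces equality, and $u *_s v$ is normal as claimed.

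The argument is almost entirely formal; the only thing to be slightly careful about is that we genuinely have the identity $\End_x(u *_s v + Ah) = \Evol^v_1(\End_x(u+h))$ on a neighborhood of $h = 0$ and not merely at a single point, so that we are allowed to differentiate it. This follows from Theorem~\ref{thm:controlledcurve} (the domain of definition of $\End$ is open and both sides agree as functions on the common domain by the concatenation formula in the remark), so no real obstacle arises.
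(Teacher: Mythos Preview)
Your proof is correct and follows essentially the same approach as the paper's own proof. The paper phrases it in terms of the subspace of tangent vectors $\left.\frac{\ud}{\ud t}\right|_0 (u+th)*_s v$ at $u*_s v$ rather than introducing the explicit linear map $A$, but the underlying idea---perturb only the first factor of the concatenation, apply the composition formula $\Evol^{u*_s v}_1 = \Evol^v_1 \circ \Evol^u_1$, and use that $T\Evol^v_1$ is an isomorphism---is identical.
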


\begin{proof}
The tangent space in $L^1([0,1];\mathbb R^k)$ at $u*_s v$ is given by 
$$\evaluate{\frac{\ud}{\ud t}}{0} (u*_s v)+t h, \quad h \in L^1([0,1];\mathbb R^k).$$
The tangent vectors of the form $$\evaluate{\frac{\ud}{\ud t}}{0} (u+t h)*_s v, \quad h \in L^1([0,1];\mathbb R^k)$$ constitute a vector subspace of the tangent space at $u*_s v$. We have
\begin{equation*}\begin{split}
T \End_x \left(\evaluate{\frac{\ud}{\ud t}}{0} (u+t h)*_s v\right)&=\evaluate{\frac{\ud}{\ud t}}{0} \End_x \left((u+t h)*_s v\right)=\\
&=\evaluate{\frac{\ud}{\ud t}}{0} \Evol_1^v \circ \End_x \left( u+t h\right)=\\
&=T \Evol_1^v \circ T \End_x \left(\evaluate{\frac{\ud}{\ud t}}{0} u+t h\right).
\end{split}\end{equation*}
The vectors $$\evaluate{\frac{\ud}{\ud t}}{0} u+t h, \quad h \in L^1([0,1];\mathbb R^k)$$ constitute the tangent space $T_u L^1([0,1];\mathbb R^k))$. Let $r$ denote the maximal rank of $\End_x$. Then because we assumed that $u$ is a normal control, the dimension of $T \End_x(T_u L^1([0,1];\mathbb R^k))$ is $r$. $\Evol_1^v$ is a local diffeomorphism, and this concludes the proof. 
\end{proof}

\begin{lemma}\label{lem:normallyaccessible}
For every point $y$ in the image of $\End_x$ there is a normal control steering $x$ to $y$. 
\end{lemma}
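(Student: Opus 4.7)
The plan is to modify an arbitrary control $u_0$ steering $x$ to $y$ by prepending to it a normal control $v$ followed by its inverse $\check v$; this leaves the endpoint unchanged (the $v$ and $\check v$ pieces cancel) while inheriting normality from $v$ via Lemma \ref{lem:normalcontrol}.

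In detail, I would first pick any $u_0$ in the domain of $\End_x$ with $\End_x(u_0)=y$, which exists by hypothesis. Since Definition \ref{def:normalcontrol} declares $r$ to be the maximal rank of $\End_x$, at least one normal control $v$ exists; write $z := \End_x(v)$, and recall from the remark preceding this lemma that $\check v$ is the inverse control, so that $\Evol^{\check v}_1(z) = x$. I would then form the three-fold concatenation $c := v *_{1/3}(\check v *_{1/2} u_0)$, which runs $v$ on $[0,1/3]$, $\check v$ on $[1/3,2/3]$, and $u_0$ on $[2/3,1]$ (each rescaled). Two applications of the identity $\Evol^{u *_s w}_1 = \Evol^w_1 \circ \Evol^u_1$ give $\End_x(c) = \Evol^{u_0}_1 \circ \Evol^{\check v}_1 \circ \Evol^v_1(x) = \Evol^{u_0}_1(x) = y$. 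Finally, Lemma \ref{lem:normalcontrol} applied to the normal control $v$ (with second argument $\check v *_{1/2} u_0$ and $s = 1/3$) yields directly that $c$ is normal, which finishes the proof.

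The only technical subtlety I foresee is verifying that $c$ actually lies in the domain of $\End_x$, since its $L^1$ norm can be large. This is resolved by observing that the trajectory induced by $c$ traces $x \to z \to x \to y$ and stays in $M$ throughout, because each of the three component trajectories does; Step (5) of Theorem \ref{thm:controlledcurve}, which handles arbitrary-norm controls by partitioning the interval into small-norm pieces, then guarantees that $c$ belongs to the domain of $\End_x$. Conceptually, the argument just records that ``prepending a cancelling loop'' is a free operation on endpoints but transports the rank of $\End_x$ up to its maximum.
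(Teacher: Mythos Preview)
Your proof is correct and follows essentially the same approach as the paper: both prepend a normal control $v$ and its inverse $\check v$ to the given control, then invoke Lemma~\ref{lem:normalcontrol}. The paper writes the concatenation simply as $v*\check v*u$ without discussing the domain issue you raise, so your version is in fact slightly more careful.
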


\begin{proof}
Suppose $y$ is attained from $x$ by means of a control $u$. Choose any normal control $v$. Then the control $v*\check v*u$ steers $x$ to $y$ and is normal by Lemma \ref{lem:normalcontrol}.
\end{proof}

\begin{theorem}\label{thm:endpointmapinitialsubmanifold}
For $x \in M$, the image of $\End_x$ is an initial submanifold, and its inclusion into $M$ is an initial mapping.
\end{theorem}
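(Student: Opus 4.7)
The plan is to invoke Lemma \ref{lem:initialsubmanifold}: it suffices to equip $N := \End_x(U)$ with a smooth manifold structure such that the inclusion $\iota : N \to M$ is an injective immersion and an initial mapping. Injectivity is automatic; the remaining work consists of producing local slice charts via the Constant Rank Theorem and verifying the initial property.

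First I would observe that the set $U_n \subset U$ of normal controls is open in $L^1([0,1];\mathbb{R}^k)$. Since $\End_x$ is smooth by Remark \ref{rem:Endpoint}, the map $u \mapsto d\End_x(u)$ is continuous; combined with lower semicontinuity of rank for linear maps into the finite-dimensional space $\mathbb{R}^n$ and the upper bound $r$, the level set where the rank equals $r$ is open. By Lemma \ref{lem:normallyaccessible}, $\End_x(U_n) = N$. Next I would apply the Banach Constant Rank Theorem \ref{thm:constantranktheorem} at each $u_0 \in U_n$. The image $Y_1 := d\End_x(u_0)(L^1)$ is a finite-dimensional, hence closed and split, subspace of $\mathbb{R}^n$, and the kernel of $d\End_x(u_0)$ admits a closed complement $X_1$ in $L^1$ such that $d\End_x(u_0)|_{X_1}: X_1 \to Y_1$ is a Banach space isomorphism. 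For $u$ in a sufficiently small neighborhood of $u_0$, continuity of the derivative together with the openness of $U_n$ and maximality of the rank ensures that $d\End_x(u)|_{X_1}$ remains a Banach space isomorphism onto the $r$-dimensional image $d\End_x(u)(L^1)$. The theorem then provides diffeomorphisms $\psi_{u_0}$ and $\varphi_{u_0}$, the latter defined on a neighborhood $V_1$ of $y := \End_x(u_0)$ in $M$, satisfying $\varphi_{u_0} \circ \End_x \circ \psi_{u_0}(y_1, e) = (y_1, 0)$. Consequently the local image of $\End_x$ near $u_0$ is the slice $S_{u_0} := \varphi_{u_0}^{-1}(\varphi_{u_0}(V_1) \cap (Y_1 \times \{0\}))$, an $r$-dimensional embedded $C^\infty$-submanifold of $M$ through $y$. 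Covering $N$ by such slices via Lemma \ref{lem:normallyaccessible}, I would declare $\varphi_{u_0}|_{S_{u_0}}$ to be a chart near $y$ on $N$, with compatibility on overlaps following because each chart parametrizes the common local image of the constant-rank map $\End_x$. The inclusion $\iota : N \to M$ is then a smooth injective immersion by construction.

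The main obstacle is to show that $\iota$ is initial, equivalently that each chart $(V_1, \varphi_{u_0})$ is an initial submanifold chart at $y$: $\varphi_{u_0}(C_y(V_1 \cap N)) = \varphi_{u_0}(V_1) \cap (Y_1 \times \{0\})$. The $\supseteq$ direction is immediate from connectedness of $S_{u_0}$. For $\subseteq$, a smooth curve $c : [0,1] \to V_1 \cap N$ starting at $y$ must be shown to remain in $S_{u_0}$; I would use a clopen-set argument on $I := \{ t \in [0,1] : c(t) \in S_{u_0} \}$. Closedness of $I$ follows from the local closedness of $S_{u_0}$ in $V_1$. Openness at $t_0 \in I$ is the delicate point: one must exploit the submersive structure given by the Constant Rank Theorem to lift $c$ near $t_0$ to a smooth curve of normal controls, and then invoke Lemma \ref{lem:normalcontrol} together with the composition identities for $\Evol$ to argue that this local lift constrains $c$ to remain in $S_{u_0}$. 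Once openness is established, $I = [0,1]$, the initial submanifold chart condition holds, and Lemma \ref{lem:initialsubmanifold} completes the proof.
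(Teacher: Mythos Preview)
Your setup through the Constant Rank Theorem is fine and matches the paper, but there is a genuine gap at the chart-compatibility step. You write that ``compatibility on overlaps follows because each chart parametrizes the common local image of the constant-rank map $\End_x$.'' This is not justified: if $u_1,u_2\in U_n$ are two normal controls with $\End_x(u_1)=\End_x(u_2)=y$, the slices $S_{u_1}$ and $S_{u_2}$ are two $r$-dimensional embedded submanifolds through $y$, and nothing you have said forces them to coincide near $y$ or even to share the same tangent space $d\End_x(u_i)(L^1)\subset T_yM$. The paper resolves this with the concatenation trick: set $u_3:=u_1*\check u_2*u_2$, apply the Constant Rank Theorem around $u_3$, and observe that for $U_1,U_2$ small enough both $U_1*\check u_2*u_2$ and $u_1*\check u_2*U_2$ lie in $U_3$, whence $\End_x(U_1)$ and $\End_x(U_2)$ are both $r$-dimensional submanifolds of the $r$-dimensional manifold $\End_x(U_3)$ and hence are open in it. Without this argument your atlas on $N$ is not well-defined.

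Your approach to the initial property is also different from the paper's, and the crucial openness step in your clopen argument is left unfilled. You propose lifting the curve $c$ to normal controls and invoking Lemma~\ref{lem:normalcontrol}, but lifting only produces \emph{some} controls hitting $c(t)$; it does not force $c(t)$ into the fixed slice $S_{u_0}$, and Lemma~\ref{lem:normalcontrol} says nothing about remaining in a prescribed slice. In effect you would need to know that locally all of $N$ lies in $S_{u_0}$, which is precisely what you are trying to prove. The paper takes a different and more constructive route, mirroring the distinguished-chart argument of Theorem~\ref{thm:globalsussmann}: it defines
\[
f(t^1,\dots,t^n)=\End_{\psi^{-1}(0,\dots,0,t^{r+1},\dots,t^n)}\circ\phi^{-1}(t^1,\dots,t^r,0,\dots,0),
\]
checks that $f$ is a local diffeomorphism, observes the equivalence $f(t)\in\End_x(N)\Leftrightarrow f(0,\dots,0,t^{r+1},\dots,t^n)\in\End_x(N)$, and then uses second-countability of the connected immersed submanifold $\End_x(N)$ to get the countable-plaque description, from which the initial submanifold chart follows directly.
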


\begin{proof}\mbox{}\par
\begin{enumerate}[(1)]
\item Let $N$ denote the set of controls in the domain of $\End_x$ with maximal rank $r$. By Lemma \ref{lem:normallyaccessible}, we can restrict $\End_x$ to $N$ without loosing any points in the image of $\End_x$, and we will do this without change of notation. 

\item Now, $\End_x$ is a smooth map with constant rank $r$. The Constant Rank Theorem \ref{thm:constantranktheorem} gives us for every $u \in N$ a neighbourhood $U$ of $u$ such that $\End_x(U)$ is a small $r$-dimensional submanifold of $M$. To make $\End_x(N)$ an immersed submanifold, we give it the coarsest topology such that $\End_x$ is an open mapping, and we use the charts of the small submanifolds $\End_x(U)$ as charts. We need to check that each set $\End_x(U)$ is open in $\End_x(N)$, which is trivial, and that $\End_x(U_1) \cap \End_x(U_2)$ is open. Let $$y=\End_x(u_1)=\End_x(u_2) \in \End_x(U_1) \cap \End_x(U_2).$$ 
We look at the control $u_3 := u_1 * u_2^{-1} * u_2$, for which the Constant Rank Theorem gives us a neighbourhood $U_3$ of $u_3$ such that $\End_x(U_3)$ is a submanifold. By making $U_1$ and $U_2$ small enough, both $U_1 * u_2^{-1} * u_2$ and $u_1 * u_2^{-1} * U_2$ can be made a subset of $U_3$. So both $\End_x(U_1)=\End_x(U_1 * u_2^{-1} * u_2)$ and $\End_x(U_2)=\End_x(u_1 * u_2^{-1} * U_2)$ are $r$-dimensional submanifolds of $M$ lying in the $r$-dimensional manifold $\End_x(U_3)$. Hence their intersection is open in $\End_x(U_3)$, and therefore also open in $\End_x(N)$. From these considerations we know that $(\End_x(N),i)$ is an immersed submanifold, where $i:\End_x(N)\rightarrow M$ is the inclusion. 

\item The rest of the proof is basically the same as the proof of Theorem \ref{thm:globalsussmann}. We will construct initial submanifold charts for $\End_x(N)$. For $y = \End_x(u) \in \End_x(N)$, by the Constant Rank Theorem \ref{thm:constantranktheorem}, there is a chart $(U,\phi)$ centered at $u \in N$ and a chart $(V,\psi)$ centered at $y \in M$ such that 
$$(\psi \circ \End_x \circ \phi^{-1})(t^1, \dots, t^r,e)=(t^1, \dots, t^r,0, \dots, 0).$$ 
Then 
$$f:(t^1, \dots,t^n) \mapsto \End_{\psi^{-1}(0,\dots,0,t^{r+1}, \dots, t^n)}\circ \phi^{-1}(t^1, \dots, t^r, 0, \dots, 0)$$ 
is a diffeomorphism near zero. Let $(W,\chi)$ be the chart given by $f^{-1}$, suitably restricted. For all $t^1, \dots, t^n$ such that both expressions make sense, we have
$$f(t^1, \dots,t^n) \in \End_x(N) \Longleftrightarrow f(0, \dots, 0, t^{r+1}, \dots, t^n) \in \End_x(N).$$
From this we see that $\End_x(N) \cap W$ is the disjoint union of connected sets of the form 
$$\{w \in W: \chi^{r+1}(w), \dots, \chi^n(w) = \text{constant}\}.$$ Since $\End_x(N)$ is a connected immersed submanifold of $M$, it is second countable, and only a countable set of constants can appear in the description of $\End_x(N) \cap W$ given above. Therefore $\chi$ is a distinguished chart as in Theorem \ref{thm:globalsussmann}. Furthermore, we see that 
$$\chi(C_x(\End_x(N) \cap W))=\chi(W)\cap \mathbb R^r \times \{0\},$$ 
so $\chi$ is an initial submanifold chart. Because we can find such a chart around every point in $\End_x(N)$, $\End_x(N)$ is an initial submanifold. 

\item According to Lemma \ref{lem:initialsubmanifolduniquestructure}, there is a unique smooth manifold structure that makes $\End_x(N)$ an immersed submanifold with the property that the inclusion is an initial mapping. It can be seen from the proof of this Lemma that charts for this structure are given by restricting the initial submanifold charts $(W,\chi)$ to $C_x(\End_x(N)\cap W)$. Restricting $\chi$ to $C_x(\End_x(N)\cap W)$ yields the inverse of the map 
\begin{equation*}
(t^1, \dots, t^r) \mapsto \End_x \circ \phi^{-1}(t^1, \dots, t^r,0,\dots,0),
\end{equation*}
and this map already is a chart of $\End_x(N)$. \qedhere
\end{enumerate}
\end{proof}

\begin{theorem}\label{thm:accessiblesetandendpointmap}
For $x\in M$, the accessible set $Acc(x)$ equals the image of $\End_x$. 
\end{theorem}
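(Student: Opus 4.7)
The plan is to establish the equality by showing both inclusions between $Acc(x)$ and $\End_x(U_x)$, where $U_x := \{u : (u,x) \in U\}$ denotes the slice of $U$ at $x$.

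The inclusion $\End_x(U_x) \subset Acc(x)$ is immediate. For any $u \in U_x$, the curve $t \mapsto \Evol_t^u(x)$ is by construction controlled by the horizontal vectorfields $X_1, \dots, X_k$ with $L^1$-controls $u_1, \dots, u_k$; it is therefore horizontal in the sense of Definitions \ref{def:controlledcurve} and \ref{def:horizontalcurve}, and its endpoint $\End_x(u)$ lies in $Acc(x)$.

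For the reverse inclusion, I would take $y \in Acc(x)$ and a horizontal curve $\gamma$ from $x$ to $y$. By Definition \ref{def:horizontalcurve}, $\gamma$ is a finite concatenation of pieces $\gamma_1, \dots, \gamma_N$, each controlled on its own parameter interval by some horizontal vectorfields with $L^1$-controls. After reparametrizing each piece to $[0,1]$ and iteratively combining the resulting controls via the concatenation $*_s$ (for instance first with $s=1/N$, then with $s=2/N$, and so on), it suffices to exhibit, for each single piece, an $L^1$-control in $\mathbb{R}^k$, i.e.\ expressed in the \emph{fixed} frame $X_1, \dots, X_k$, whose evolution recovers the reparametrized piece. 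Granting this, the assembled control $u$ has the reparametrized $\gamma$ as its solution in $C([0,1];M)$, and step (\ref{thm:controlledcurve:step5}) of the proof of Theorem \ref{thm:controlledcurve} shows that any solvable control lies in $U$; hence $u \in U_x$ and $\End_x(u) = y$.

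The main obstacle is thus the local rewriting problem: given $\dot\gamma_j(t) = \sum_i v_i(t)\, Y_i(\gamma_j(t))$ with horizontal $Y_i$ and $v_i \in L^1$, produce $L^1$-functions $u_1, \dots, u_k$ with $\dot\gamma_j(t) = \sum_l u_l(t)\, X_l(\gamma_j(t))$ almost everywhere. Since each $Y_i(p)$ lies in $H_p = \linearspan(X_1(p), \dots, X_k(p))$, I would locally write $Y_i = \sum_l a_{il}\, X_l$ with continuous coefficients $a_{il}$ obtained from the Moore--Penrose pseudoinverse of the Gram matrix of $X_1, \dots, X_k$ with respect to any auxiliary Riemannian metric on $M$. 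Covering the compact image of $\gamma_j$ by finitely many such neighborhoods and subdividing its parameter interval accordingly, the functions $u_l(t) := \sum_i v_i(t)\, (a_{il} \circ \gamma_j)(t)$ are in $L^1$ because the $a_{il} \circ \gamma_j$ are bounded on each compact subinterval. The hardest point is the behaviour of the pseudoinverse where the rank of $H$ jumps; when $H$ is regular this difficulty is absent, and in the general case it can be handled by a measurable selection on the exceptional set.
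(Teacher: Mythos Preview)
Your approach differs substantially from the paper's, and the rewriting step contains a real gap in the singular case. The paper never tries to express an arbitrary horizontal curve through the fixed generators $X_1,\dots,X_k$. Instead it invokes Theorem~\ref{thm:endpointmapinitialsubmanifold}, which endows $L:=\End_x(N)$ with an initial submanifold structure; a concatenation estimate ($u*_s((1-s)e_j)\to u$ as $s\to 1$) shows that $L$ is stable under each flow $Fl^{X_j}$, hence $H\subset TL$, and then the pullback/uniqueness argument of Theorem~\ref{thm:accessiblesetisintegralmanifold} shows that no horizontal curve can leave $L$. This route never needs to produce an explicit $L^1$ control in the fixed frame for a given horizontal curve.

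Your rewriting can fail to yield $L^1$ controls, and no measurable selection repairs it. On $\mathbb R^2$ take $X_1=\partial_x$, $X_2=x^2\,\partial_y$, so assumptions~(\ref{endpointmap:assumption1})--(\ref{endpointmap:assumption4}) hold locally. The vectorfield $Y=x\,\partial_y$ is pointwise in $H$ (it vanishes where the rank drops) but is not a continuous combination of $X_1,X_2$. The curve $\gamma(t)=(t,t^2/2)$ is controlled by $X_1$ and $Y$ with constant controls $v_1=v_2=1$, hence horizontal, and starts at the origin. For $t>0$ the vectors $X_1(\gamma(t)),X_2(\gamma(t))$ are independent, so the only solution of $\dot\gamma(t)=u_1(t)X_1(\gamma(t))+u_2(t)X_2(\gamma(t))$ is $u_1\equiv 1$, $u_2(t)=1/t\notin L^1([0,1])$; every measurable selection coincides with this almost everywhere. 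Thus your scheme of recovering $\gamma$ itself fails here, even though $(1,\tfrac12)$ does lie in the image of $\End_{(0,0)}$ via a \emph{different} curve (e.g.\ $u_1\equiv 1$, $u_2\equiv\tfrac32$). The paper's indirect argument through the initial submanifold structure is precisely what sidesteps this obstruction; your proposal would be correct under the additional hypothesis that $H$ is regular, but not under the standing assumptions of Section~\ref{sec:endpointmap}.
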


\begin{proof}
Let $L$ denote the image of $\End_x$. Of course, $L$ is horizontally pathwise connected, so we have $L \subset Acc(x)$. We will show that $Acc(x) \subset L$ in a few steps. For the sake of a simpler notation let $\norm{\cdot}$ denote $\norm{\cdot}_{L^1([0,1];\mathbb R^k)}$. 
\begin{enumerate}[(1)]

\item \label{thm:accessiblesetandendpointmap1}We claim that for any function $u \in L^1([0,1];\mathbb R^k)$ we have $$\lim_{s \rightarrow 1} \norm{u *_s 0 - u}=0,$$ where $0$ denotes the zero control. This is clear for continuous functions $u$ by dominated convergence, and it still holds for arbitrary $u$ by a density argument: Indeed, for $\epsilon >0$ we can find a continuous function $v$ with $\norm{u-v}<\epsilon/3$. Then 
\begin{equation*} \begin{split}
\norm{u *_s 0-u} & \leq \norm{u *_s 0-v*_s 0} + \norm{v*_s 0 -v}+\norm{v-u} \\
& = \norm{u -v}+ \norm{v*_s 0 -v}+\norm{v-u} \leq \frac{\epsilon}{3} + \frac{\epsilon}{3}+\frac{\epsilon}{3}=\epsilon
\end{split}\end{equation*}
for $s$ close enough to one, and this proves the claim. 

\item \label{thm:accessiblesetandendpointmap2}For $j=1,\dots,k$, let $e_j$ be the unit vectors in $\mathbb R^k$, and let $u$ be a normal control. From 
\begin{equation*}\begin{split}
\norm{u*_s ((1-s)\ e_j)-u} & \leq \norm{u*_s ((1-s)\ e_j)-u*_s 0}+\norm{u*_s 0 - u}\\
& =\norm{(1-s)\ e_j}+\norm{u*_s 0 - u}
\end{split}\end{equation*}
we see that $u*_s((1-s)\ e_j)$ is still a normal control for $s$ close to one because the set of normal controls is open. But $\End_x(u*_s((1-s)\ e_j))=Fl^{X_j}_{1-s}(\End_x(u))$. Therefore we can conclude that $L$ is stable under the flows of $X_j$. The flows of $X_j$ are smooth mappings taking values in $L$. Because $i:L \rightarrow M$ is an initial mapping, the flows are also smooth as mappings into $L$, and therefore the vectorfields $X_j$ are tangent to $L$. We conclude that the tangent space of $L$ contains $H$. 

\item \label{thm:accessiblesetandendpointmap3}We claim that a horizontal path never leaves $L$. The proof is exactly the same as the one given in Theorem \ref{thm:accessiblesetisintegralmanifold}. \qedhere
\end{enumerate}
\end{proof}

With Theorem \ref{thm:accessiblesetandendpointmap} we are able to prove Chow's Theorem. However, we will first prove the slightly stronger

\begin{theorem}\label{thm:endpointmapisopen}
If $H$ is a bracket generating distribution on $M$, then the endpoint map is open and its maximal rank equals the dimension of $M$. 
\end{theorem}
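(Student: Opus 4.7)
The proof naturally splits into three parts: (i) the maximal rank of $\End_x$ equals $n = \dim M$, (ii) $\End_x$ is surjective, and (iii) $\End_x$ is an open map.

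For (i), I would combine Theorems \ref{thm:endpointmapinitialsubmanifold} and \ref{thm:accessiblesetandendpointmap} to view the image $L := \End_x(U) = Acc(x)$ as an initial submanifold of $M$, with each $X_j$ tangent to $L$ by the proof of Theorem \ref{thm:accessiblesetandendpointmap}. Since $i : L \to M$ is initial, each $X_j$ restricts to a smooth vector field $X_j|_L$ on $L$ that is $i$-related to $X_j$. Lemma \ref{lem:frelated2} then propagates $i$-relatedness through every iterated Lie bracket, so every iterated Lie bracket of $X_1, \dots, X_k$ is tangent to $L$ at each $y \in L$. The bracket-generating hypothesis forces these brackets to span $T_yM$, so $T_yL = T_yM$ and $\dim L = n$; hence the maximal rank of $\End_x$ is $n$.

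For (ii), $L$ is an $n$-dimensional immersed submanifold of the $n$-dimensional connected manifold $M$, so $L$ is open in $M$. Applying the same argument to every starting point shows that each accessibility set is open; since horizontal paths can be reversed and concatenated, the $Acc$-classes partition $M$ into open subsets, and connectedness of $M$ forces this partition to be trivial, so $Acc(x) = M$ and $\End_x$ is surjective. For (iii), at any normal control $u^*$ the differential $T_{u^*}\End_x$ is surjective onto $T_{\End_x(u^*)}M$, so by the Constant Rank Theorem \ref{thm:constantranktheorem} $\End_x$ is locally a submersion, hence open, near $u^*$.

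The main obstacle will be openness at a general control $u \in V$ with $y = \End_x(u)$ when $u$ itself is not normal. The strategy is to use the identity $\End_x(u *_s w) = \End_y(w)$ together with a normal control $w$ for $\End_y$ with $\End_y(w) = y$ produced by Lemma \ref{lem:normallyaccessible}; a variant of Lemma \ref{lem:normalcontrol} applied in the second factor then makes $u *_s w$ normal for $\End_x$, so $\End_x$ is a submersion in a neighbourhood of $u *_s w$. The difficulty is keeping $u *_s w$ inside $V$, which demands $w$ small in $L^1$ and $s$ close to $1$ (using $\norm{u *_s 0 - u}_{L^1} \to 0$ as $s \to 1$). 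Producing arbitrarily small normal $w$ with endpoint $y$ is where bracket generating re-enters: by concatenating flows of the $X_j$ along a carefully chosen sequence of directions, as in the second proof of Chow, one realises arbitrarily small piecewise-constant normal controls $v$ for $\End_y$, and then $w := v *_\tau \check v$ is small, normal, and returns to $y$, which completes the argument.
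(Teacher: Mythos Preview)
Your argument is essentially correct but follows a different route from the paper's, and one step deserves scrutiny.

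\textbf{Comparison.} You first prove that the maximal rank is $n$ by invoking Theorems~\ref{thm:endpointmapinitialsubmanifold} and~\ref{thm:accessiblesetandendpointmap}, then deduce openness; the paper proves both simultaneously. Concretely, for an open set $U$ and $y=\End_x(u)\in\End_x(U)$, the paper passes (as you do) to $\End_y$ on a small ball $B(\epsilon_0,0)\subset V$, but then works with the \emph{local} maximal rank $r(\epsilon_0):=\max\{\operatorname{rank}\End_y'(w):w\in B(\epsilon_0,0)\}$, without knowing in advance that $r(\epsilon_0)=n$. Since $r(\epsilon)$ is monotone and eventually constant, one can pick $v\in B(\epsilon_0/2,0)$ with rank $r(\epsilon_0)$; then $v*\check v$ returns to $y$, still has rank $r(\epsilon_0)$, and the Constant Rank Theorem produces a small $r(\epsilon_0)$-dimensional submanifold $L$ through $y$. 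Only then does the paper invoke bracket generating (via tangency of the $X_j$) to force $\dim L=n$, which simultaneously yields openness and the rank statement. This bypasses the need to manufacture small controls that are normal in the global sense.

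\textbf{The delicate step in your approach.} Your argument hinges on producing, for every $\epsilon>0$, a control $w$ with $\|w\|_{L^1}<\epsilon$, $\End_y(w)=y$, and $\operatorname{rank}\End_y'(w)=n$. Part~(i) gives a normal control somewhere, but not a small one, and it is not obvious that normality persists under scaling. Your appeal to the second proof of Chow does fill this gap: choosing bracket expressions $B_1,\dots,B_n$ with $B_i(X_1,\dots,X_k)(y)$ spanning $T_yM$, the composed-flow map $\chi$ of Theorem~\ref{schwachesballboxtheorem} factors as $\End_y\circ c$ for a map $c$ from $\mathbb R^n$ into piecewise-constant controls that is smooth away from the coordinate hyperplanes; since the reparametrised map $\psi$ has full rank near $0$, so does $\chi$ at nearby $\tau$ with nonzero components, and the chain rule forces $\End_y'(c(\tau))$ to be surjective with $\|c(\tau)\|$ small. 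Two caveats: you should check that this construction does not require the regularity hypothesis used in Theorem~\ref{schwachesballboxtheorem} (it does not, since you already have a finite spanning set $X_1,\dots,X_k$ and need no orthonormality); and your part~(ii) on surjectivity is extraneous to the statement and can be dropped. The paper's local-rank trick is more economical and keeps the third proof independent of the second.
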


\begin{proof}
We have to show that for an open subset $U$ of the domain of definition of $\End_x$ and for $y =\End_x(u) \in \End_x(U)$, there is a neighbourhood of $y$ that is contained in $\End_x(U)$. 

\begin{enumerate}[(1)]

\item Because $\norm{u*_s v-u*_s 0}=\norm{v}$ for an arbitrary control $v$ and by looking at step (\ref{thm:accessiblesetandendpointmap1}) in the proof of Theorem \ref{thm:accessiblesetandendpointmap} we see that there is a neighbourhood $V$ of $0$ and some $s$ close to one such that $u*_s V \subset U$. Then also 
$$\End_y(V) = \End_x(u*_s V) \subset \End_x(U).$$ 
It is sufficient to show that $\End_y(V)$ is a neighbourhood of $y$. 

\item For $\epsilon >0$ let $r(\epsilon)$ denote the maximal rank of $\End_y$ on the set $B(\epsilon,0)$ in $L^1([0,1];\mathbb R^k)$. $r$ is monotone and non-zero, so there is $\epsilon_0>0$ such that $r(\epsilon)=r(\epsilon_0)$ for all $\epsilon < \epsilon_0$. We can choose $\epsilon_0$ even smaller so that additionally $B(\epsilon_0,0)\subset V$. Let $V'$ denote the set of controls in $B(\epsilon_0,0)$ where $\End_y$ has maximal rank $r(\epsilon_0)$. Then $V'$ is open. $B(\epsilon/2,0)$ contains a control $v$ where $\End_y$ has rank $r(\epsilon_0)$. Then $v*\check v \in V'$, and $y=\End_y(v*\check v) \in \End_y(V')$. It is sufficient to show that $\End_y(V')$ is neighbourhood of $y$. 

\item $\End_y$ has constant rank on $V'$, so by the Constant Rank Theorem \ref{thm:constantranktheorem} there is an even smaller neighbourhood $V''$ of $v*\check v$ such that $\End_y(V'')$ is a small submanifold $L$. It is sufficient to show that $L$ is a neighbourhood of $y$.

\item By the same arguments as in step (\ref{thm:accessiblesetandendpointmap2}) in the proof of Theorem \ref{thm:accessiblesetandendpointmap} the vectorfields $X_j$ are tangent to $L$, and so are all the brackets of  $X_j$. Because of the bracket generating condition, the dimension of $L$ equals the dimension of $M$, and therefore $L$ is an open neighbourhood of $y$. \qedhere

\end{enumerate}
\end{proof}

\subsubsection{Proof of Chow's Theorem}

\begin{theorem}[Chow's Theorem] \index{Chow's Theorem}
Let $H$ be a locally finitely generated, bracket generating distribution on a manifold $M$. Then any two points in $M$ can be joined by a horizontal path. 
\end{theorem}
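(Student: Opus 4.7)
The plan is to reduce to a local question via Lemma \ref{lem:locallyhorizontallypathwise} and then apply the machinery developed in the previous subsections, chiefly Theorem \ref{thm:endpointmapisopen} and Theorem \ref{thm:accessiblesetandendpointmap}. Since $M$ is connected, Lemma \ref{lem:locallyhorizontallypathwise} reduces the claim to showing that every $x \in M$ has a horizontally pathwise connected neighbourhood; equivalently, that $Acc(x)$ is a neighbourhood of $x$.

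To make the local reduction rigorous, I would fix $x \in M$ and, using the hypothesis that $H$ is locally finitely generated, choose an open neighbourhood $W$ of $x$ on which $H$ is spanned by finitely many horizontal vectorfields $X_1,\dots,X_k$. After shrinking $W$ to a relatively compact open chart domain identified with an open subset of $\mathbb R^n$, the vectorfields $X_i$ and their first derivatives are bounded on the (even smaller) open set $M' \subset W$ that we now take as our ambient manifold. Thus assumptions (a)--(d) preceding Theorem \ref{thm:controlledcurve} are satisfied on $M'$, and $H\vert_{M'}$ is still bracket generating because brackets are a local operation.

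Now I apply the results of Section \ref{sec:endpointmap} inside $M'$. By Theorem \ref{thm:endpointmapisopen}, the endpoint map $\End_x$ is an open map whose maximal rank equals $\dim M'=\dim M$. By Theorem \ref{thm:accessiblesetandendpointmap}, the accessible set $Acc(x)$ (computed in $M'$) coincides with the image of $\End_x$. Since $\End_x$ is open and its domain contains the zero control (which maps to $x$), the image of $\End_x$ is an open subset of $M'$ containing $x$. Hence $Acc(x)$ is an open neighbourhood of $x$ in $M$, so $x$ admits a horizontally pathwise connected neighbourhood, and Lemma \ref{lem:locallyhorizontallypathwise} finishes the argument.

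The only subtle point, and the place where one might slip, is the local reduction: the theorems of Section \ref{sec:endpointmap} were stated under the global boundedness assumptions (c) and (d), which are not automatic on an arbitrary chart. The fix is routine but must be acknowledged: shrink $W$ to a relatively compact open subset $M'$ on which the $X_i$ and their derivatives are bounded (or, equivalently, multiply the $X_i$ by a bump function supported in $W$ that equals $1$ near $x$, which does not change $H$ locally and only restricts which curves lie in the accessible set of $x$ in $M'$). Since we only need $Acc(x)$ to contain a neighbourhood of $x$ in $M$, replacing $M$ by $M'$ is harmless.
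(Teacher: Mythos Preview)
Your proof is correct and follows essentially the same strategy as the paper: reduce to a local statement via Lemma~\ref{lem:locallyhorizontallypathwise}, shrink to a chart where assumptions (\ref{endpointmap:assumption1})--(\ref{endpointmap:assumption4}) hold, and use the endpoint-map machinery to show $Acc(x)$ is open. The only cosmetic difference is that you invoke Theorem~\ref{thm:endpointmapisopen} directly (openness of $\End_x$ gives an open image containing $x$), whereas the paper instead cites Theorem~\ref{thm:accessiblesetandendpointmap} to identify $Acc(x)$ with an initial submanifold and then argues its dimension is $n$ via the bracket-generating condition; since that dimension argument is already packaged inside Theorem~\ref{thm:endpointmapisopen}, your route is slightly more economical but not genuinely different.
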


\begin{proof}
First, we will work locally, and therefore we can assume that $M$ is a connected, open subset of $\mathbb R^n$, and that $H$ is spanned by $X_1, \dots, X_k \in \mathfrak X(M)$. By making $M$ small enough we can assure that assumptions (\ref{endpointmap:assumption1}) to (\ref{endpointmap:assumption4}) of Section \ref{sec:endpointmap} hold. So we can apply Theorem \ref{thm:accessiblesetandendpointmap} which asserts that for any $x \in M$, $Acc(x)$ equals the image of $\End_x$, which is an initial submanifold. Horizontal vectorfields are tangent to it, and so are all their brackets. Because of the bracket generating condition its dimension equals the dimension of $M$, and therefore it is open. 

Now we return to the global case. By the previous considerations, every point in $M$ has a horizontally pathwise connected neighbourhood. Then by Lemma \ref{lem:locallyhorizontallypathwise} $M$ is horizontally pathwise connected.
\end{proof}

%
%

\section{The Poincar\'e Lemma}\label{sec:poincare}

The Poincar\'e Lemma in its general form states that locally, all closed differential forms are exact. For one-forms, this results to the following lemma:

\begin{lemma} \index{Poincar\'e Lemma}
Let $U$ be a star domain in $\mathbb R^n$. Let $h_i:U \rightarrow \mathbb R$ be smooth functions such that $\frac{\partial h_i}{\partial x_j}=\frac{\partial h_j}{\partial x_i}$ for all $1 \leq i,j \leq n$. Then there exists a smooth function $f:U \rightarrow \mathbb R$ such that $h_i = \frac{\partial f}{\partial x_i}$.
\end{lemma}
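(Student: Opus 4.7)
The plan is to construct $f$ explicitly by a line integral from the star center. Without loss of generality I may assume $U$ is star-shaped with respect to the origin (translating coordinates if necessary). I then define
\begin{equation*}
f(x) := \int_0^1 \sum_{i=1}^n h_i(tx)\, x_i \, \ud t.
\end{equation*}
This is well-defined because the segment $\{tx : 0 \leq t \leq 1\}$ lies in $U$ by the star-domain hypothesis, and the integrand is continuous in $(t,x)$.

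To verify $\partial f/\partial x_j = h_j$, I would differentiate under the integral sign, which is justified since the integrand is smooth jointly on $[0,1] \times K$ for any compact $K \subset U$. This gives
\begin{equation*}
\frac{\partial f}{\partial x_j}(x) = \int_0^1 \left( h_j(tx) + \sum_{i=1}^n t\, \frac{\partial h_i}{\partial x_j}(tx)\, x_i \right) \ud t.
\end{equation*}
Now I invoke the symmetry hypothesis $\partial h_i/\partial x_j = \partial h_j/\partial x_i$ to rewrite the second summand as $\sum_i t\, (\partial h_j/\partial x_i)(tx)\, x_i$. The whole integrand is then recognizable as the total derivative $\frac{\ud}{\ud t}\bigl[t\, h_j(tx)\bigr]$, and the fundamental theorem of calculus yields $\frac{\partial f}{\partial x_j}(x) = h_j(x)$ as required.

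Smoothness of $f$ follows by iterated differentiation under the integral sign, using the smoothness of the $h_i$. There is no genuinely hard step: the only creative ingredient is guessing the correct primitive $f$, after which everything reduces to a direct computation. The star-shape assumption on $U$ enters only to ensure that the integration path from the origin to $x$ stays inside $U$ so that $h_i(tx)$ is defined for all $t \in [0,1]$.
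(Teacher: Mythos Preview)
Your proof is correct and is the standard argument for the Poincar\'e Lemma in this setting. The paper itself does not supply a proof of this lemma; it is stated as a classical fact and followed only by the remark that, once $f$ is known to exist, it can be recovered from the $h_i$ via the line-integral formula $f(x)=f(x_0)+\int_0^1 \sum_i u_i(t)\,h_i(\gamma(t))\,\ud t$ along any curve $\gamma$ from $x_0$ to $x$. Your construction is precisely this formula specialized to the straight segment $\gamma(t)=tx$ from the star center, together with the verification that the resulting $f$ actually has the right partial derivatives --- so your argument is fully consistent with, and completes, what the paper leaves implicit.
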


In the above situation $f$ can be recovered from the functions $h_i$ by integration. Indeed, if $\gamma:[0,1] \rightarrow \mathbb R^n$ is an absolutely continuous curve, $\gamma(0)=x_0$, $\gamma(1)=x$ and $u_i=\frac{\ud \gamma_i}{\ud t}$, then
$$f(x)=f(x_0)+ \int_0^1 \frac{\ud (f \circ \gamma)}{\ud t}(t)\ \ud t = \int_0^1 \sum_{i=1}^n u_i(t) h_i(\gamma(t))\ \ud t.$$

It is possible to adapt this idea to subriemannian geometry. If $H$ is a bracket generating distribution, then by Chow's Theorem \ref{thm:chow} the points $x$ and $x_0$ can be connected by a horizontal path $\gamma$. Using the same procedure as above, $f$ can be reconstructed from its horizontal derivatives alone. Indeed, if $\gamma$ is controlled by horizontal vectorfields $X_1, \dots, X_k$ with controls $u_i, \dots, u_k \in L^1([0,1];\mathbb R)$, then
\begin{equation}\tag{$*$} \label{eqn:reconstructf}
f(x)=f(x_0)+ \int_0^1 \frac{\ud (f \circ \gamma)}{\ud t}(t)\ \ud t = \int_0^1 \sum_{i=1}^k u_i(t) (X_i f)(\gamma(t))\ \ud t.
\end{equation}
In the following we will study how continuity and differentiability is passed on from the horizontal derivatives of $f$ to $f$. 

\begin{lemma}\label{lem:sublipschitz}
Let $H$ be a regular bracket generating distribution endowed with a smooth subriemannian metric. Let $f:M \rightarrow \mathbb R$ be a function such that 
$$X f : M \rightarrow \mathbb R, \quad (X f)(x) := \evaluate{\frac{\ud}{\ud t}}{0} f \circ Fl^X_t(x)$$
exists and is continuous for all $X \in \mathfrak X_{loc,H}(M)$. Then for every point $x_0 \in M$ there is a neighbourhood $U$ of $x_0$ and a constant $C$ such that the following statement holds for all $x \in U$:
$$\abs{f(x)-f(x_0)} \leq C d(x,x_0).$$
Here $d(x,x_0)$ is the subriemannian distance of $x$ and $x_0$ as in Definition \ref{def:subriemanniandistance}.
\end{lemma}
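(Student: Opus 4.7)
The plan is to exploit the chain-rule identity \eqref{eqn:reconstructf} displayed immediately above the lemma: if $\gamma$ is a horizontal curve from $x_0$ to $x$ with controls $u_1,\dots,u_k$ in an orthonormal horizontal frame $X_1,\dots,X_k$, then $|f(x)-f(x_0)|$ is controlled by $\int_0^1 \sum_i |u_i(t)|\,|(X_i f)(\gamma(t))|\,\ud t$, both factors of which are locally bounded.

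For the setup I would use regularity of $H$ together with Lemma~\ref{lem:finitely generated} and a Gram--Schmidt procedure to obtain orthonormal horizontal vectorfields $X_1,\dots,X_k$ on a precompact neighbourhood $V$ of $x_0$. Continuity of each $X_i f$ provides $K$ with $|X_i f|\le K$ on $V$. Shrink to a neighbourhood $U\subset V$ of $x_0$ such that every $x\in U$ can be joined to $x_0$ by a horizontal curve contained in $V$ of length arbitrarily close to $d(x,x_0)$; such a $U$ exists because small subriemannian balls around $x_0$ are neighbourhoods of $x_0$ by Theorem~\ref{schwachesballboxtheorem}. For $x\in U$ and $\eta>0$ pick a horizontal $\gamma:[0,1]\to V$ from $x_0$ to $x$ with $l(\gamma)\le d(x,x_0)+\eta$. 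On each segment of $\gamma$ the controlling horizontal vectorfields can be expanded in the frame to give $L^1$ controls $u_1,\dots,u_k$ with $\dot\gamma=\sum u_i X_i$ almost everywhere. Granted \eqref{eqn:reconstructf}, Cauchy--Schwarz and orthonormality of the $X_i$ yield
\[
|f(x)-f(x_0)| \le K\int_0^1 \sum_i |u_i(t)|\,\ud t \le K\sqrt{k}\int_0^1 \Bigl(\sum_i u_i(t)^2\Bigr)^{1/2}\ud t = K\sqrt{k}\,l(\gamma)\le K\sqrt{k}\,(d(x,x_0)+\eta),
\]
and letting $\eta\to 0$ gives the lemma with $C=K\sqrt{k}$.

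The main obstacle is the rigorous justification of \eqref{eqn:reconstructf} for $L^1$ controls, since $f$ is only assumed to have continuous horizontal derivatives and need not be differentiable along non-horizontal directions. Along the flow of any fixed smooth horizontal field $Y$, continuity of $Yf$ and the very definition of $Yf$ make $t\mapsto f(Fl^Y_t(p))$ of class $C^1$ with derivative $(Yf)(Fl^Y_t(p))$, so \eqref{eqn:reconstructf} is immediate on any segment where $\gamma$ is a single smooth horizontal flow. My plan is to first establish \eqref{eqn:reconstructf} for piecewise constant controls by concatenating such segments --- using, on each piece $Y_\ell=\sum_j c_{\ell,j} X_j$, the pointwise linearity $(Y_\ell f)=\sum_j c_{\ell,j}(X_j f)$ --- and then extend to $L^1$ controls by approximation, using that piecewise constant $u^{(n)}\to u$ in $L^1$ yield curves $\gamma^{(n)}\to\gamma$ uniformly by Theorem~\ref{thm:controlledcurve}, so that both sides of \eqref{eqn:reconstructf} pass to the limit through uniform convergence of $(X_i f)\circ\gamma^{(n)}$. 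The linearity itself is the sharpest point and requires a separate two-parameter flow comparison combining continuity of $X_i f$ on $V$ with the single-frame Lipschitz bound $|f(Fl^{X_j}_\tau(q))-f(q)|\le K|\tau|$ --- already in hand from the first step --- to control the second-order discrepancy between $Fl^{\sum c_j X_j}_t(p)$ and $Fl^{c_1 X_1}_t\circ\cdots\circ Fl^{c_k X_k}_t(p)$.
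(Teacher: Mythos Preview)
Your argument is essentially the paper's own: choose a local orthonormal horizontal frame, invoke the reconstruction identity \eqref{eqn:reconstructf}, apply Cauchy--Schwarz to turn $\sum_i|u_i|\,|X_i f|$ into a constant times $\|\dot\gamma\|$, and take the infimum over horizontal curves. The only cosmetic difference is that the paper applies Cauchy--Schwarz to the pair $\bigl(|u_i|\bigr)_i$ and $\bigl(|X_if|\bigr)_i$ directly, bounding $\bigl(\sum_i (X_if)^2\bigr)^{1/2}$ by a single constant $C$, whereas you first bound each $|X_if|$ by $K$ and then use Cauchy--Schwarz on the $u_i$ alone, arriving at $K\sqrt{k}$; and the paper does not spell out the containment of the near-minimising curves in the frame neighbourhood.

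Where you genuinely go beyond the paper is in your second paragraph: the paper simply invokes \eqref{eqn:reconstructf} without comment, even though under the hypotheses of the lemma $f$ is not assumed differentiable and the chain rule for $f\circ\gamma$ with $L^1$ controls is not automatic. Your plan---establish \eqref{eqn:reconstructf} first for piecewise constant controls via the single-flow $C^1$ fact, then pass to the limit---is the natural route, and the linearity step $(\sum_j c_j X_j)f=\sum_j c_j(X_jf)$ is indeed the delicate point. One remark: once the piecewise-constant case gives $|f(y)-f(x_0)|\le C\,l(\gamma)$ for all such $\gamma$ ending at $y$, Theorem~\ref{schwachesballboxtheorem} already makes $f$ continuous on a neighbourhood of $x_0$, so the limit $f(\gamma^{(n)}(1))\to f(\gamma(1))$ needed in your approximation argument is then available. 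This closes the loop without circularity.
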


\begin{proof}
This is a local statement, so we can assume that there are everywhere defined horizontal vectorfields $X_1, \dots, X_k$ spanning $H$. Because $H$ is regular, we can assume additionally that $X_1, \dots X_k$ are orthonormal. Then for all horizontal curves $\gamma$ that connect $x_0$ to $x$ by means of $L^1$-controls $u_i$ we have
\begin{equation*}\begin{split}
\abs{f(x)-f(x_0)} \leq & \int_0^1 \sum_{i=1}^k \abs{u_i(t)} \abs{ (X_i f)(\gamma(t))} \ \ud t \\
\leq & \int_0^1 \left( \sum_{i=1}^k u_i(t)^2 \right)^{1/2} \left( \sum_{i=1}^k (X_i f)(\gamma(t))^2 \right)^{1/2} \ud t \\
\leq & \ C \int_0^1 \left( \sum_{i=1}^k u_i(t)^2 \right)^{1/2} \ud t = C \int_0^1 \norm{\dot\gamma(t)} \ \ud t = C \cdot l(\gamma), 
\end{split}\end{equation*}
where $C$ bounds $(\sum_{i=1}^k (X_i f)^2)^{1/2}$. Therefore 
\begin{equation*}
\abs{f(x)-f(x_0)} \leq C \inf_\gamma \ l(\gamma) = C d(x,y). \qedhere
\end{equation*}
\end{proof}

It is important to note that $\abs{f(x)-f(y)} \leq C d(x,y)$ does not imply that $f$ is Lipschitz continuous with respect to the Euclidean structure in a chart. This will be illustrated by the following example. 

\begin{example} 
We look at the function
$$f(x,y,z)=\left(\abs{x}^{5/2}+\abs{y}^{5/2}+\abs{z}^{3/2} \right)^{1/2}$$
on the Heisenberg geometry, i.e. $\mathbb R^3$ endowed with the distribution $H$ spanned by the vectorfields 
$$X_1=\frac{\partial}{\partial x}-\frac{y}{2} \frac{\partial}{\partial z} \quad \text{ and } \quad X_2=\frac{\partial}{\partial y}+\frac{x}{2} \frac{\partial}{\partial z}.$$
From $[X_1,X_2]=\frac{\partial}{\partial z}$ we see that $H$ is bracket generating. We calculate
\begin{align*}
\frac{\partial f}{\partial x}=
\begin{cases}
\displaystyle \frac{5}{4} \left(\abs{x}^{5/2}+\abs{y}^{5/2}+\abs{z}^{3/2} \right)^{-1/2} \abs{x}^{3/2} \sgn(x) & \text{ for } (x,y,z) \neq (0,0,0) \\
0 & \text{ for } (x,y,z) = (0,0,0)
\end{cases}\\
\frac{\partial f}{\partial y}=
\begin{cases}
\displaystyle \frac{5}{4} \left(\abs{x}^{5/2}+\abs{y}^{5/2}+\abs{z}^{3/2} \right)^{-1/2} \abs{y}^{3/2} \sgn(y) & \text{ for } (x,y,z) \neq (0,0,0) \\
0 & \text{ for } (x,y,z) = (0,0,0)
\end{cases}\\
\frac{\partial f}{\partial z}=
\begin{cases}
\displaystyle \frac{3}{4} \left(\abs{x}^{5/2}+\abs{y}^{5/2}+\abs{z}^{3/2} \right)^{-1/2} \abs{z}^{1/2} \sgn(z) & \text{ for } (x,y,z) \neq (0,0,0) \\
\pm \infty & \text{ for } (x,y,z) = (0,0,0).
\end{cases}
\end{align*}
Because of the singularity of $\frac{\partial f}{\partial z}$ at zero, $f$ is not Lipschitz continuous with respect to the Euclidean structure. However, the functions 
\begin{equation*}
\frac{\partial}{\partial x} f, \quad \frac{\partial}{\partial y} f, \quad 
\left(-\frac{y}{2} \frac{\partial}{\partial z}\right) f \quad \text{and} \quad \left(\frac{x}{2} \frac{\partial}{\partial z}\right) f
\end{equation*}
are continuous, as the following estimates show. We will denote the $p$-norm on $\mathbb R^3$ by $\norm{\cdot}_p$, and we will use the fact that all norms on $\mathbb R^3$ are equivalent. 
\begin{align*}
\abs{\frac{\partial f}{\partial x}} & \leq \frac{5}{4} \left(\abs{x}^{5/2}+\abs{y}^{5/2}+\abs{z}^{5/2} \right)^{-1/2} \abs{x}^{3/2}\\
 & \leq \frac{5}{4} ( \norm{(x,y,z)}_{5/2} )^{-5/4} ( \norm{(x,y,z)}_1 )^{3/2} 
     \leq \mathrm{const} ( \norm{(x,y,z)}_1 )^{1/4}.\\
\abs{\frac{\partial f}{\partial y}} & \leq \frac{5}{4} \left(\abs{x}^{5/2}+\abs{y}^{5/2}+\abs{z}^{5/2} \right)^{-1/2} \abs{y}^{3/2}\\
 & \leq \frac{5}{4} ( \norm{(x,y,z)}_{5/2} )^{-5/4} ( \norm{(x,y,z)}_1 )^{3/2} 
     \leq \mathrm{const} ( \norm{(x,y,z)}_1 )^{1/4}.\\
\abs{\frac{y}{2} \frac{\partial f}{\partial z}}
 & \leq \abs{\frac{y}{2}} \frac{3}{4} \left(\abs{x}^{5/2}+\abs{y}^{5/2}+\abs{z}^{5/2} \right)^{-1/2} \abs{z}^{1/2}\\
 & \leq \frac{3}{8} \norm{(x,y,z)}_1 (\norm{(x,y,z)}_{5/2})^{-5/4} (\norm{(x,y,z)}_1)^{1/2} 
     \leq \mathrm{const} (\norm{(x,y,z)}_1)^{1/4}.\\
\abs{\frac{x}{2} \frac{\partial f}{\partial z}}
 & \leq \abs{\frac{x}{2}} \frac{3}{4} \left(\abs{x}^{5/2}+\abs{y}^{5/2}+\abs{z}^{5/2} \right)^{-1/2} \abs{z}^{1/2}\\
 & \leq \frac{3}{8} \norm{(x,y,z)}_1 (\norm{(x,y,z)}_{5/2})^{-5/4} (\norm{(x,y,z)}_1)^{1/2} 
     \leq \mathrm{const} (\norm{(x,y,z)}_1)^{1/4}.
\end{align*}
Therefore also $X_1 f$ and $X_2 f$ are continuous. Because any horizontal vectorfield $X$ can be written as $X=f_1 X_1+f_2 X_2$ with smooth functions $f_1$ and $f_2$, $X f$ is continuous for all horizontal vectorfields $X$. So the conditions of Lemma \ref{lem:sublipschitz} are satisfied, and the lemma implies that $f$ is Lipschitz continuous with respect to the subriemannian distance. But, as we have seen above, $f$ is not Lipschitz continuous with respect to the Euclidean structure. Note that also the conditions of Theorem \ref{thm:mytheorem} are satisfied with $r=0$, and that the theorem implies the continuity of $f$. 
\end{example}

\begin{remark}
Let us suppose that $M$ is an $n$-dimensional subriemannian manifold with distribution $H$, and that $k$ brackets are needed to express the gradient $(\frac{\partial}{\partial x^1}, \dots, \frac{\partial}{\partial x^n})$ in terms of horizontal vectorfields, i.e.~$k$ is the smallest number such that $H_k=TM$. For some $r \geq k$, let $f:M \rightarrow \mathbb R$ be a function such that 
$$X f : M \rightarrow \mathbb R, \quad (X f)(x) := \evaluate{\frac{\ud}{\ud t}}{0} f \circ Fl^X_t(x)$$
exists and is of class $C^r$ for all $X \in \mathfrak X_{loc,H}(M)$. Then the gradient $(\frac{\partial f}{\partial x^1}, \dots, \frac{\partial f}{\partial x^n})$ of $f$ is of class $C^{r-k}$, and therefore $f$ is of class $C^{r-k+1}$. However, the following theorem yields a stronger result, namely that $f$ is of class $C^r$ no matter how many brackets are needed to span the tangent space. 
\end{remark}

\begin{theorem}\label{thm:mytheorem}
Let $H$ be a locally finitely generated, bracket generating distribution on a manifold $M$, and $r \geq 0$. If $f:M \rightarrow \mathbb R$ is a function such that 
$$X f : M \rightarrow \mathbb R, \quad (X f)(x) := \evaluate{\frac{\ud}{\ud t}}{0} f \circ Fl^X_t(x)$$
exists and is of class $C^r$ for all $X \in \mathfrak X_{loc,H}(M)$, then $f$ is also of class $C^r$. 
\end{theorem}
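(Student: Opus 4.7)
The plan is to reconstruct $f$ locally from its horizontal derivatives via the integral formula \eqref{eqn:reconstructf}, applied along horizontal paths coming from a smooth section of the endpoint map, and then to read off the regularity of $f$ from that of the $X_i f$. We localize by working in a chart where $M$ is an open subset of $\mathbb R^n$ and the assumptions (\ref{endpointmap:assumption1})--(\ref{endpointmap:assumption4}) of Section \ref{sec:endpointmap} hold; it suffices to show $f$ is $C^r$ near an arbitrary fixed $x_0 \in M$.

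First I would build a smooth section of $\End_{x_0}$ whose values are piecewise constant in $t$. By Theorem \ref{thm:endpointmapisopen} the endpoint map $\End_{x_0}$ is open and of maximal rank $n = \dim M$. Picking any normal control $v$, the control $u_0 := v * \check v$ is normal by Lemma \ref{lem:normalcontrol} and steers $x_0$ to $x_0$; by density of piecewise constant controls in $L^1$ and the openness of the normality condition, we may further arrange $u_0$ to be piecewise constant in $t$. Choose piecewise constant $v_1, \dots, v_n \in L^1([0,1]; \mathbb R^k)$ such that $D\End_{x_0}|_{u_0}(v_1), \dots, D\End_{x_0}|_{u_0}(v_n)$ form a basis of $T_{x_0} M$; this is possible since the differential is surjective and piecewise constant controls are $L^1$-dense. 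Applying the Inverse Function Theorem \ref{thm:inversefunctiontheorem} to $\phi(\epsilon) := \End_{x_0}(u_0 + \sum_i \epsilon_i v_i)$ yields a smooth local section $s(y) = u_0 + \sum_i \epsilon_i(y) v_i$ of $\End_{x_0}$ on a neighbourhood $V$ of $x_0$. Crucially, each $s(y)$ is piecewise constant in $t$ on a common partition $0 = t_0 < \cdots < t_N = 1$.

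For $y \in V$, the curve $\gamma^y := \Evol^{s(y)}_\cdot(x_0)$ is a concatenation of flow lines of the horizontal vectorfields $X_{c^{(j)}(y)} := \sum_i c^{(j)}_i(y) X_i$, where $c^{(j)}(y) \in \mathbb R^k$ is the constant value of $s(y)$ on $[t_{j-1}, t_j]$ and depends affinely, hence smoothly, on $y$. Since by hypothesis $X_{c^{(j)}(y)} f$ is $C^r$, the fundamental theorem of calculus applied on each piece yields the reconstruction formula
\[
f(y) - f(x_0) = \int_0^1 \sum_{i=1}^k s(y)_i(t) \cdot (X_i f)(\gamma^y(t)) \, dt.
\]
The right-hand side is $C^r$ in $y$ as the composition of the smooth map $y \mapsto s(y) \in L^1$; the smooth map $y \mapsto \gamma^y \in C([0,1]; M)$ from Theorem \ref{thm:controlledcurve}; the $C^r$ superposition operator $\gamma \mapsto X_i f \circ \gamma \in C([0,1]; \mathbb R)$ from Theorem \ref{thm:smoothsuperpositionoperator} (since $X_i f$ is $C^r$); and the bounded bilinear pairing $(u, g) \mapsto \int_0^1 \sum_i u_i g_i \, dt$. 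Hence $f$ is $C^r$ on $V$, and since $x_0 \in M$ was arbitrary, $f$ is $C^r$ on all of $M$.

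The main technical obstacle is the derivation of the split reconstruction formula from the piecewise identities
$f(\gamma^y(t_j)) - f(\gamma^y(t_{j-1})) = \int_{t_{j-1}}^{t_j} (X_{c^{(j)}(y)} f)(\gamma^y(t)) \, dt$, which implicitly uses the linearity $(\sum_i c_i X_i) f = \sum_i c_i (X_i f)$ of the action $X \mapsto X f$ on horizontal vectorfields. This identity is immediate when $f$ is $C^1$ but must be justified here directly from the hypothesis; I would establish it by a bootstrap on $r$, the $r = 0$ case (continuity of $f$) following from the unsplit integral identity alone, after which the second-order agreement $Fl^X_t \circ Fl^Y_t(p) - Fl^{X+Y}_t(p) = O(t^2)$ combined with a Lipschitz estimate against the subriemannian distance yields the desired linearity.
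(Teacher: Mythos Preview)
Your approach is essentially the paper's: localize, build a smooth local section of $\End_{x_0}$, and read off regularity from the reconstruction formula \eqref{eqn:reconstructf} via Theorem~\ref{thm:controlledcurve}, Theorem~\ref{thm:smoothsuperpositionoperator}, and the bounded bilinear pairing $I$. The paper obtains its section directly from the Constant Rank Theorem~\ref{thm:constantranktheorem} in $L^1$ (take $u(x)=\phi^{-1}(\psi(x),0)$), whereas you restrict to a finite-dimensional affine family of piecewise constant controls and invoke the ordinary Inverse Function Theorem; both yield a smooth section, and the paper's route is shorter since no density or piecewise-constant bookkeeping is needed.

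Your detour through piecewise constant controls is not gratuitous, though: it is precisely what lets you justify the piecewise identity $f(\gamma^y(t_j))-f(\gamma^y(t_{j-1}))=\int_{t_{j-1}}^{t_j}(X_{c^{(j)}(y)}f)(\gamma^y(t))\,dt$ directly from the hypothesis, without any chain rule along a general controlled curve. The paper, by contrast, simply invokes \eqref{eqn:reconstructf}, which was written down under the tacit assumption that $f$ is already differentiable; the paper's proof does not address the subtlety you raise. So your concern about the linearity $(\sum_i c_i X_i)f=\sum_i c_i(X_i f)$ is legitimate and is in fact a point on which the paper is silent. One way to sidestep it entirely within your framework is to choose the perturbations $v_1,\dots,v_n$ so that on each subinterval $[t_{j-1},t_j]$ all of $u_0,v_1,\dots,v_n$ point along a \emph{single} $e_{i_j}$; then the vectorfield on each piece is a scalar multiple $\lambda_j(y)X_{i_j}$ of a fixed $X_{i_j}$, the hypothesis gives $(\lambda X_{i_j})f=\lambda\,(X_{i_j}f)$ by the obvious reparametrization of the flow, and no further linearity is needed.
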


\begin{proof}
We will show that $f$ is of class $C^r$ at every $x_0 \in M$. This is a local question, so we can assume that conditions (\ref{endpointmap:assumption1}) to (\ref{endpointmap:assumption4}) of Section \ref{sec:endpointmap} are satisfied, so that the endpoint map is well-defined. 

\begin{enumerate}[(1)]

\item First, we will construct a smooth local section of the endpoint map. That is a smooth map $u$ defined near $x_0$ such that $\End_{x_0}(u (x)) =x$. By Lemma \ref{lem:normallyaccessible} there is a normal control $u_0$ such that $\End_{x_0}(u_0)=x_0$. By Theorem \ref{thm:endpointmapisopen} $\End_{x_0}$ is open and the rank of $\End_{x_0}$ at $u_0$ equals the dimension of $M$, say $n$. $\End_{x_0}$ has constant rank near $u_0$, so by the Constant Rank Theorem \ref{thm:constantranktheorem} there is a subspace $E \subset L^1([0,1];\mathbb R^k)$, an open neighbourhood $U$ of $u_0$, an open neighbourhood $V$ of $x$ and charts 
\begin{alignat*}{2}
\phi: U & \subset L^1([0,1];\mathbb R^k) && \rightarrow \phi(U) \subset \mathbb R^n \times E, \\
\psi: V & \subset M                                  && \rightarrow \psi(V) \subset \mathbb R^n
\end{alignat*}
such that for all $(t^1, \dots, t^n,e) \in \phi(U)$ we have
$$\psi \circ \End_{x_0} \circ \phi(t^1, \dots, t^n,e)=(t^1, \dots, t^n).$$
Without loss of generality we can assume that $\phi(u_0)=0$. Then the map 
\begin{equation*}
u: \left\{ \begin{split}
\psi^{-1}(\{ (t^1, \dots, t^n): (t^1,\dots,t^n,0)\in \phi(U) \}) & \rightarrow L^1([0,1];\mathbb R^k)\\
x & \mapsto \phi^{-1}(\psi(x),0)
\end{split}\right.
\end{equation*}
has the desired properties. 

\item Let $\Gamma$ be the operator assigning to a control in the domain of $\End_{x_0}$ the unique curve $\gamma$ with this control and $\gamma(0)=x_0$. By Theorem \ref{thm:controlledcurve} $\Gamma$ is smooth as a map into $C([0,1];M)$. 

\item By Theorem \ref{thm:smoothsuperpositionoperator} about superposition operators and because $X_i f$ is of class $C^r$, the map 
$$C([0,1];M) \rightarrow C([0,1];\mathbb R), \quad \gamma \mapsto (X_i f) \circ \gamma$$ is of class $C^r$. 

\item Let $I$ denote the map 
$$L^1([0,1];\mathbb R^k) \times C([0,1];\mathbb R^k) \rightarrow \mathbb R, \quad (u, h) \mapsto \int_0^1 \sum u_i(t) h_i(t) \ud t.$$
$I$ is smooth because it is linear and bounded. 

\item From equation \eqref{eqn:reconstructf}, using the notation $X=(X_1, \dots, X_k)$, we have
$$f=f(x_0) + I \left(u, (X f) \circ \Gamma \circ u\right).$$
Therefore $f$ is of class $C^r$. \qedhere
\end{enumerate}
\end{proof}

\bibliographystyle{plainnat}
\bibliography{bib/bibliography}
\printindex
\end{document}